\documentclass{amsart}
\usepackage{amssymb, amsmath, amsfonts, amsthm, mathtools, mathrsfs, xspace, tikz-cd,enumitem}
\usepackage{graphicx}

\setlist[enumerate]{
  label=(\arabic*),
  leftmargin=*,
  labelsep=0.5em
}

\theoremstyle{definition}
\newtheorem{theorem}{Theorem}[section]
\newtheorem{lemma}[theorem]{Lemma}
\newtheorem{proposition}[theorem]{Proposition}
\newtheorem{definition}[theorem]{Definition}

\newtheorem{claim}[theorem]{Claim}
\newtheorem{notation}[theorem]{Notation}
\newtheorem{remark}[theorem]{Remark}
\newtheorem{mainthm}{Theorem}
\newtheorem{maincor}[mainthm]{Corollary}

\renewcommand{\subset}{\subseteq}
\newcommand{\action}{\curvearrowright}
\newcommand{\Mat}{\mathbb{M}}
\newcommand{\C}{\mathbb{C}}
\newcommand{\Cstaralg}{$C^{\ast}$-algebra\xspace}
\newcommand{\Cstaralgs}{$C^{\ast}$-algebras\xspace}
\newcommand{\longto}{\longrightarrow}
\newcommand{\bidual}{\ast\ast}
\newcommand{\twoone}{$\mathrm{II}_1$\xspace}
\newcommand{\mintensor}{\otimes_{\mathrm{min}}}
\newcommand{\barotimes}{\overline{\otimes}}
\newcommand{\bartensor}{\overline{\otimes}}
\newcommand{\algtensor}{\otimes_{\mathrm{alg}}}

\mathtoolsset{showonlyrefs=true}

\begin{document}

\title[Relative bi-exactness for graph-wreath products]{Relative bi-exactness and structural results for graph-wreath product von Neumann algebras}
\author{Taisuke Hoshino}
\date{\today}

\begin{abstract}
We study relative bi-exactness of graph product and graph-wreath product group von Neumann algebras. 
In particular, 
we obtain the relative bi-exactness for graph product von Neumann algebras $LH_{\Gamma}=\ast_{v,\Gamma} LH_v$ and graph-wreath product von Neumann algebras $L(H_{\Gamma}\rtimes G)=(\ast_{v,\Gamma} LH)\rtimes G$, assuming that the component groups are exact. 
We adopt the $C^{\ast}$-algebraic method of Ozawa for the proof. 
As an application, for a certain class of graph-wreath products, we establish the rigidity result for the quotient graph $G\backslash\Gamma$ under stable isomorphism. 
Furthermore, we obtain a new family of prime \twoone factors. 
\end{abstract}

\maketitle

\section{Introduction and main results}

Historically, since the beginning of the study of von Neumann algebras, important examples have sometimes been constructed from simpler objects such as groups and group actions on spaces.  
At other times, they are constructed from simpler algebras, using constructions such as tensor products and amalgamated free products.
In this context, rigidity problems ask how much of the information of the building blocks is preserved in the resulting von Neumann algebra.
These problems have been a central topic in the theory of von Neumann algebra. For example,
from this point of view, Connes' celebrated work on amenable von Neumann algebras \cite{con} showes a complete absence of rigidity
for algebras arising from amenable i.c.c.\@ groups. 

At the beginning of the 21st century, two powerful methods 
to analyze the relative position of commuting von Neumann subalgebras were developed.
One is Popa's deformation/rigidity technique and its variants \cite{pop3, pop1}, 
and the other is Ozawa's notion of bi-exactness for groups \cite{oza1}.  
Bi-exactness is a property characterized by several equivalent conditions
including amenability of a certain action on their boundaries; 
typical examples include amenable groups and hyperbolic groups (see Section \ref{biexact} for detail). 
In this article, we focus on the bi-exactness approach.

These methods brought significant progress to the study of
 (amalgamated) free product von Neumann algebras \cite{ipp, oza1}.  
They provided alternative proofs of the primeness of free group factors, originally established by Ge \cite{ge} using free entropy, and moreover yielded rigidity results for tensor products and free products 
\cite{ipp, op, oza2}. 
Another class of von Neumann algebras extensively studied by these techniques is the wreath product von Neumann algebra.  
This development led to the discovery of the first example of $W^{\ast}$-superrigid groups \cite{ipv}, which are groups $G$ such that every group $H$ with isomorphic group von Neumann algebras $LG\cong LH$ satisfies $G\cong H$.

The goal of this article is to establish bi-exactness results for \emph{graph product} and \emph{graph-wreath product} groups and to apply the results to the structural problem of their group
von Neumann algebras.
Our argument is an extension of the preceding one for free products and wreath products. 
Given a graph $\Gamma =(V\Gamma, E\Gamma) $
and a family of groups $(G_v)_{v\in V\Gamma}$, 
the \emph{graph product group} $G_{\Gamma}$ is defined by
\[
G_{\Gamma} = \langle (G_v)_{v\in V\Gamma} \mid [G_v, G_w]= 1 \text{ }((v,w)\in E\Gamma) \rangle. 
\]
This is also viewed as 
an iterated amalgamated free product construction. 
It was first introduced in the group-theoretic setting by Green \cite{gre}, and later brought into the operator algebraic context by Caspers and Fima \cite{cf}. 
The rigidity problem in this setting concerns 
whether one can recover the underlying graph or the vertex groups 
from the resulting groups or von Neumann algebras. 
While this problem has been extensively studied in the group theory, 
graph product von Neumann algebras have attracted significant attention only in the last decade. Several rigidity-type results are obtained now; 
see e.g.\@ \cite{cdd1, cdd2, dv, hi} for recent progress. 
These results are based on 
the deformation/rigidity theory for amalgamated free products. 

These rigidity-type results for graph product von Neumann algebras
motivate our focus on the bi-exactness approach. 
In the operator algebraic context, 
bi-exactness is often understood as the
(AO)-type condition. 
Along this line, in \cite{cas}
Caspers established (AO)-type property for right-angled Hecke algebras, 
which are special cases of graph product von Neumann algebras. 
Then, recently in \cite[Section 6.2]{bcc}, Borst, Caspers, and Chen obtained 
a solidity type result for graph products by assuming and making use of
(AO)-type property for each vertex von Neumann algebra. 
And in another direction, by a geometric approach, 
bi-exactness-type results for graph products 
were obtained by Oyakawa in \cite[Theorem 4.9, 5.7]{oya2}, under certain technical assumptions.

Our first main result establishes a bi-exactness-type result for graph products, removing the technical constraints of the preceding result \cite[Theorem 4.9]{oya2}. 

\begin{mainthm}
Let $\Gamma=(V\Gamma, E\Gamma)$ be a graph and $(H_v)_{v\in V\Gamma}$ a family of exact groups.  Then the graph product $H_{\Gamma}$ is bi-exact relative to $\{ \langle H_w \mid v=w \text{ or }(v,w)\in E\Gamma \rangle\}_{v\in V\Gamma}$.  

Moreover, if $(H_v)$ are bi-exact for all $v\in V\Gamma$, then $H_{\Gamma}$ is bi-exact relative to $\{ \langle H_w \mid (v,w)\in E\Gamma \rangle\}_{v\in V\Gamma}$.  
\end{mainthm}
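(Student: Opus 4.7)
The plan is to verify Ozawa's \Cstaralg\ characterization of relative bi-exactness: $H_\Gamma$ is bi-exact relative to a family $\mathcal{G}$ of subgroups precisely when there exists a unital, $H_\Gamma$-equivariant \ucp\ map from $\ell^\infty(H_\Gamma)$ into a unital $H_\Gamma$-\Cstaralg\ $A$ carrying a topologically amenable $H_\Gamma$-action, whose kernel contains the ideal $c_0(H_\Gamma;\mathcal{G})$. I would proceed by induction on $|V\Gamma|$; the base case $|V\Gamma|=1$ reduces to the assumed exactness (respectively bi-exactness) of a single vertex group.

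For the inductive step, fix a vertex $v_0 \in V\Gamma$ and consider the amalgamated free product splitting
\[
H_\Gamma \;=\; H_{\Gamma \setminus \{v_0\}} \;*_{H_{\mathrm{link}(v_0)}}\; H_{\mathrm{star}(v_0)},
\]
where $H_{\mathrm{star}(v_0)} \cong H_{v_0} \times H_{\mathrm{link}(v_0)}$, and both $H_{\Gamma\setminus\{v_0\}}$ and $H_{\mathrm{star}(v_0)}$ are graph products over graphs with strictly fewer vertices. By induction, each factor carries a boundary \Cstaralg\ realizing its relative bi-exactness, and the amalgamating subgroup $H_{\mathrm{link}(v_0)}$ is exact (a graph product of exact groups being exact). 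Following Ozawa's \Cstaralg\ treatment of amalgamated free products, I would assemble these inductive boundaries using the Bass-Serre tree of the splitting: explicitly, one forms a fibered product of the two factor boundaries over the tree boundary, the induced $H_\Gamma$-action inherits topological amenability from the factor actions together with exactness of the edge stabilizers, and the \ucp\ map $\ell^\infty(H_\Gamma) \to A$ is built by a Stinespring-type compression indexed by Green's normal form.

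The kernel control then becomes a combinatorial check: every element of $c_0\bigl(H_\Gamma;\{H_{\mathrm{star}(v)}\}_{v\in V\Gamma}\bigr)$ should be annihilated. This is natural in the splitting because $H_{\mathrm{star}(v_0)}$ is itself one of the Bass-Serre vertex stabilizers, whereas for $v\neq v_0$ the star $H_{\mathrm{star}(v)}$ sits inside the complementary factor $H_{\Gamma\setminus\{v_0\}}$; hence a sequence escaping every star-coset in $H_\Gamma$ must escape the inductive relative family of at least one factor, on which the inductive small-at-infinity condition kicks in. The second statement follows by the same scheme, but now each $H_v$ is itself bi-exact, which lets one absorb the $H_{v_0}$-direction of $H_{\mathrm{star}(v_0)} \cong H_{v_0} \times H_{\mathrm{link}(v_0)}$ into the amenability data rather than into the kernel, thereby shrinking the output relative family from stars to links.

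The main obstacle will be the precise gluing at the amalgamation: combining the two inductively given boundary actions in a way that is simultaneously $H_\Gamma$-equivariant, topologically amenable, and respects the small-at-infinity condition along both the tree direction and the two factor directions. This is the graph-product analogue of Ozawa's original amalgamated free product argument and requires careful bookkeeping through Green's normal form of how syllables from non-adjacent vertex groups interact across the splitting at $v_0$; it is exactly here that the \ucp\ machinery, rather than bare topological amenability, becomes indispensable, since the amalgamating subgroup $H_{\mathrm{link}(v_0)}$ need only be exact and not amenable.
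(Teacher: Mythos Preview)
Your inductive scheme has a concrete error in the kernel-control step. You claim that for $v\neq v_0$ the star subgroup $H_{\mathrm{St}(v)}$ sits inside the factor $H_{\Gamma\setminus\{v_0\}}$; this is false whenever $v$ is adjacent to $v_0$, since then $v_0\in\mathrm{St}(v)$ and hence $H_{v_0}\subset H_{\mathrm{St}(v)}$ but $H_{v_0}\not\subset H_{\Gamma\setminus\{v_0\}}$. More structurally, the Bass--Serre argument for an amalgam $A\ast_C B$ yields bi-exactness relative to the \emph{factor groups} $\{A,B\}$, not relative to a prescribed family of subgroups inside them. So after one step you would only know that $H_\Gamma$ is bi-exact relative to $\{H_{\Gamma\setminus\{v_0\}},\,H_{\mathrm{St}(v_0)}\}$. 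Reaching the family of stars (or links) then requires a transitivity statement of the shape ``$G$ bi-exact rel.\ $\{H_i\}$ and each $H_i$ bi-exact rel.\ $\{K_{i,j}\}$ implies $G$ bi-exact rel.\ $\{K_{i,j}\}$,'' and this is \emph{not} automatic: one needs, at minimum, that the small-at-infinity ideals for the $H_i$ are compatible with the one for $G$, and the inductive families computed in the subgraphs $\Gamma\setminus\{v_0\}$ and $\mathrm{St}(v_0)$ do not even match the stars in $\Gamma$. This, rather than the gluing you flag, is where the real work lies, and your proposal does not address it. (You also tacitly assume $\Gamma$ is finite.)

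The paper bypasses all of this by working with the (AO)-type characterization instead of boundary amenability, and by treating the whole graph at once rather than inducting on the group. One lets $A_v=C^{\ast}(S(H_v),C^{\ast}_\lambda(H_v))\subset\mathcal{B}(\ell^2 H_v)$ (or with $\ell^\infty H_v$ in the exact case) and takes $A\subset\mathcal{B}(\ell^2 H_\Gamma)$ to be the \Cstaralg generated by all the $A_v$'s under the graph-product representation. Nuclearity of $A$ is obtained by approximating each $A_v$ through matrix algebras and invoking a nuclearity criterion for amalgamated free products due to Hasegawa; this is the only place an induction on $|V\Gamma|$ appears, and it is at the level of graph products of matrix algebras, where the hypotheses of that criterion are automatic. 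The commutator condition $[a,JbJ]\in\mathcal{K}(\mathcal{G})$ is then a direct Fock-space computation: one checks that $[f,Ju_hJ]$, for $f\in S(H_v)$ and $h\in H_w$, is supported on words beginning with $v$ and otherwise lying in $\mathrm{Lk}(v)$, which immediately produces the correct relative family $\{H_{\mathrm{Lk}(v)}\}$ without any bootstrapping.
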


The proof of Theorem A relies on the $C^{\ast}$-algebraic 
technique developed in \cite{oza2}. 
In addition, the sufficient condition for the nuclearity of 
reduced amalgamated free products, obtained
in \cite{has},
also play a part. 


We next consider
graph-wreath product von Neumann algebras. 
Given a group $H$ and a group action on a graph $G\action \Gamma$, 
denote by $H_{\Gamma}$ the graph product with the underlying graph $\Gamma$
and the vertex groups $(H_v)_{v\in V\Gamma}$ all isomorphic to $H$. Then, there is a Bernoulli-type action
$\sigma : G \to \mathrm{Aut}(H_{\Gamma})$ such that $\sigma_g$ sends $h\in H_v$
to $h\in H_{gv}$. 
The semi-direct product $H_{\Gamma}\rtimes_{\sigma} G$ is called \emph{graph-wreath product group}. 
Unlike graph products and wreath products, 
this construction has received comparatively little attention. 
Nevertheless, in the context of bi-exactness, 
Oyakawa \cite[Theorem 5.7]{oya2} 
recently proved, using a geometric approach, that a certain class of graph-wreath products satisfies a relative version of bi-exactness. 
We strengthen this result by adopting an analytic method, along the lines of the approach developed in \cite{hik}. 

\begin{mainthm}
Let $H$ be an exact group, $\Gamma=(V\Gamma, E\Gamma)$ a graph, and $G\action \Gamma$ an action of a bi-exact group $G$ on $\Gamma$. If the isotropy groups of $G\curvearrowright V\Gamma$ are all finite and $G\action V\Gamma$ has finitely many orbits, then the graph-wreath product $H_{\Gamma}\rtimes G$ is bi-exact relative to $\{ \langle H_w \mid v=w \text{ or }(v,w)\in E\Gamma \rangle\}_{v\in V\Gamma}$. 

Moreover, if $H$ is bi-exact, then $H_{\Gamma}\rtimes G$ is bi-exact relative to $\{ \langle H_w \mid (v,w)\in E\Gamma \rangle\}_{v\in V\Gamma}$. 
\end{mainthm}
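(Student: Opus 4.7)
The plan is to adapt the analytic, $C^{\ast}$-algebraic strategy developed in \cite{hik} for ordinary wreath products, using Theorem A as the main input for the graph-product base. Write $\Lambda = H_{\Gamma}\rtimes G$ and $\mathcal{F}_{\mathrm{st}} = \{\langle H_w\mid v=w \text{ or }(v,w)\in E\Gamma\rangle\}_{v\in V\Gamma}$. The first step is to invoke the standard $C^{\ast}$-algebraic reformulation of relative bi-exactness: it suffices to construct a $\Lambda$-invariant (for the left-right action) unital $C^{\ast}$-subalgebra $\mathbb{A}\subset\ell^{\infty}(\Lambda)$ containing $c_0(\Lambda)$ on which the action of $\Lambda\times\Lambda$ is topologically amenable modulo the ideal generated by $c_0$-functions along cosets of the subgroups in $\mathcal{F}_{\mathrm{st}}$.

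The two ingredients are: (i) from Theorem A, applied to $H_{\Gamma}$ with all vertex groups equal to the exact group $H$, a boundary algebra $\mathbb{A}_{H_{\Gamma}}\subset \ell^{\infty}(H_{\Gamma})$ certifying that $H_{\Gamma}$ is bi-exact relative to $\mathcal{F}_{\mathrm{st}}$; and (ii) from the bi-exactness of $G$, an amenable action on a suitable small-at-infinity compactification $\mathbb{B}(G)$. The candidate algebra for $\Lambda$ is then assembled as a crossed-product-type object built from $\mathbb{A}_{H_{\Gamma}}$ together with $\mathbb{B}(G)$, where the $G$-action used in the crossed product is the one induced by the permutation action $G\action V\Gamma$. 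The two standing hypotheses on $G\action V\Gamma$ enter at this stage: having finitely many orbits reduces the construction to a finite direct sum over orbit representatives and preserves nuclearity of the resulting algebra, while finite isotropies guarantee that the induced objects on each orbit are well-behaved and that no infinite chunk of $H_{\Gamma}$ is stabilised by a divergent sequence in $G$.

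The main obstacle is verifying that the glued action is genuinely topologically amenable modulo $\mathcal{F}_{\mathrm{st}}$: a divergent sequence $(h_n,g_n)\in\Lambda$ may escape to infinity through both coordinates at once, with $g_n$ simultaneously permuting the support of $h_n$. One must check that in every such case the image in $\mathbb{A}$ is either truly compact (lies in $c_0(\Lambda)$) or concentrates along cosets of a subgroup in $\mathcal{F}_{\mathrm{st}}$; this interaction of the two coordinates is the delicate part of the argument and is where the finite-orbit and finite-isotropy hypotheses are used most heavily. Once the first assertion is established, the ``moreover'' clause follows by the same enhancement as in the second part of Theorem A: when $H$ is itself bi-exact, the vertex factor $H_v$ can be absorbed into the boundary data, which allows the subgroup family to be contracted from the closed star $\mathcal{F}_{\mathrm{st}}$ to the open star, yielding bi-exactness relative to $\{\langle H_w\mid (v,w)\in E\Gamma\rangle\}_{v\in V\Gamma}$.
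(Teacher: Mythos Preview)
Your outline identifies the correct ingredients (Theorem A for the base, bi-exactness of $G$, the $C^{\ast}$-algebraic/(AO)-type framework of \cite{hik}) and correctly locates the main obstacle, but the step you flag as ``delicate'' is exactly the one you do not address, and it cannot be filled in by a generic crossed-product gluing of $\mathbb{A}_{H_{\Gamma}}$ with $\mathbb{B}(G)$. The problem is that a boundary algebra for $H_{\Gamma}$ lives in $\ell^{\infty}(H_{\Gamma})$, and there is no natural $\Lambda\times\Lambda$-equivariant way to transport it into $\ell^{\infty}(\Lambda)$ such that the resulting action is amenable modulo $\mathcal{F}_{\mathrm{st}}$; the Bernoulli-type $G$-action moves the support of $h$, so a sequence $(h_n,g_n)$ with $g_n\to\infty$ and $\mathop{supp}h_n$ drifting along with $g_n$ need not be detected by either piece separately.

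What the paper does, following \cite{hik}, is to supply the missing device explicitly. One first introduces an auxiliary family $\mathcal{G}$ that contains not only the subgroups $H_{\mathrm{Lk}v}$ (or $H_{\mathrm{St}v}$) but also certain ``bounded'' subsets $A(E,F,n)\subset\Lambda$ recording simultaneous constraints on the length, letters, and support of the $H_{\Gamma}$-coordinate relative to both the identity and the $G$-coordinate. One then builds, using proper length functions on $G$, $H$, and $V\Gamma$ (whose existence uses both the finite-orbit and finite-isotropy hypotheses), a concrete map $m:\Lambda\to\ell^{1}(V\Gamma)$ with controlled asymptotics: $\|m(xz)-m(z)\|_{1}/|z|_{f}\to 0$ and $\|m(kz)-k.m(z)\|_{1}/|z|_{f}\to 0$ as $z\to\infty/\mathcal{G}$. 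The adjoint $\mu^{\ast}:\ell^{\infty}(V\Gamma)\to\ell^{\infty}(\Lambda)/c_{0}(\Lambda,\mathcal{G})$ is then $G$-equivariant into a specific quotient algebra $\mathcal{C}_{\mathcal{G}}$, and since $G\curvearrowright\ell^{\infty}(V\Gamma)$ is amenable (exactness of $G$), so is $G\curvearrowright\mathcal{C}_{\mathcal{G}}$. This is the step that handles the two-coordinate interaction you describe, and it does not come for free from Theorem~A. Only after this does one combine with the nuclear algebra $D$ from the proof of Theorem~A and with the bi-exactness of $G$ via a short-exact-sequence argument to pass from $\mathcal{G}$ to $\mathcal{F}\wedge\mathcal{G}$, which finally yields $\mathcal{F}_{\mathrm{st}}$. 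Your proposal is missing both the auxiliary family $\mathcal{G}$ and the map $m$; without them the amenability verification has no content.
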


\begin{remark}

(1) The relative bi-exactness proven in Theorems A and B are in the strongest form possible, given that $H$ is a general 
nonamenable bi-exact group. Indeed, if $H$ is non-amenable, then for any $v\in V\Gamma$, the commutant of $W^{\ast}\{LH_w\mid (v,w)\in E\Gamma\}\subset LH_{\Gamma}$ contains the nonamenable subalgebra $LH_v$.  

(2) Compared to the results in \cite{oya2}, Theorems A and B are stronger than Theorem 4.9 and 5.7 (2) in \cite{oya2} respectively, but Theorem A cannot reproduce \cite[Theorem 5.7 (1)]{oya2}, where the genuine bi-exactness is proven under the assumption of $H$ being finite and $\Gamma$ admitting no small circuits. 
\end{remark}

Now we turn to our rigidity results of graph-wreath product von Neumann algebras. 
In the case of graph products, it is now known that one can recover 
the underlying graph from the resulting graph product von Neumann algebras
under some technical assumption on the graphs and vertex algebras  \cite{bcc, hi}. 
Our motivation is to generalize this result to graph-wreath products. 

To describe the result we recall the definition of 
the quotient graph of an action on a graph $G\action \Gamma$;
The quotient graph $G\backslash \Gamma$ consists of the set of vertices $G\backslash V\Gamma$ and the the set of edges $G\backslash E\Gamma$, 
where $G(v,w)\in G\backslash E\Gamma$ is an edge connecting $Gv$ and $Gw$. 
Beware that these graphs in general admit loops and multiple edges. 

We finally state our rigidity result for graph-wreath product von Neumann algebras. 
A simple graph $\Gamma$ is called \emph{untransvectable} 
if, for each pair of vertices $v\neq w\in V\Gamma$, there is a vertex $u\neq w$
which is adjacent to $v$ but not adjacent to $w$. 
The \emph{girth} of $\Gamma$ is the length of the shortest circuit in it. The girth of a tree is defined to be $\infty$. 
For the precise definition see Section \ref{chapgraph}. 

\begin{mainthm}
Let $H_1$ and $H_2$ be infinite exact groups, 
$\Gamma_1=(V\Gamma_1, E\Gamma_1)$ and $\Gamma_2=(V\Gamma_2, E\Gamma_2)$ infinite connected untransvectable locally finite graphs with their girth at least 5, and $G_1\curvearrowright \Gamma_1$ and $G_2\action \Gamma_2$ actions of an infinite bi-exact group $G_1$, $G_2$, respectively. 

Assume that $H_i$ is i.c.c.\@ and $G_i\action V\Gamma_i$ is a free action with finitely many orbits for $i=1,2$. 

If the graph-wreath product $\mathrm{II}_1$ factors 
$L((H_1)_{\Gamma_1}\rtimes G_1)$ and $L((H_2)_{\Gamma_2}\rtimes G_2)$ are stably isomorphic, 
then $|G_1\backslash V\Gamma_1|=|G_2\backslash V\Gamma_2|$. 

Moreover, if $H_1$ and $H_2$ have Kazhdan's Property (T), then the quotient graphs $G_1\backslash \Gamma_1$ and $G_2\backslash \Gamma_2$ are isomorphic. 
\end{mainthm}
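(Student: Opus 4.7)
Proof plan. Write $M_i = L((H_i)_{\Gamma_i}\rtimes G_i)$, $A_v^{(i)} = LH_v \subset M_i$ for the vertex algebra at $v \in V\Gamma_i$, and $B_v^{(i)} = L\langle H_w : v=w \text{ or }(v,w)\in E\Gamma_i\rangle \subset M_i$ for the corresponding star algebra. The stable isomorphism will be realized as a $*$-isomorphism $\theta\colon (pM_1 p)^t \cong M_2$ for some projection $p$ and scalar $t$. Because the action $G_i \action V\Gamma_i$ is free with finitely many orbits, conjugation by the canonical unitaries of $G_i$ inside $M_i$ permutes the family $\{B_v^{(i)}\}_{v\in V\Gamma_i}$ with exactly $|G_i\backslash V\Gamma_i|$ orbits. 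The strategy is to transport this orbit structure along $\theta$, combining Theorem B with Popa's intertwining-by-bimodules in the form developed for graph product algebras in \cite{ipp, hi, cdd1, cdd2}.

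Step 1 (Location of vertex algebras). Theorem B guarantees that each $M_i$ is bi-exact relative to the star family $\{B_v^{(i)}\}_{v \in V\Gamma_i}$. Apply this to the image $\theta(A_v^{(1)})\subset M_2$: since $H_1$ is i.c.c.\@ and the graph is infinite connected with non-trivial links, both $\theta(A_v^{(1)})$ and its relative commutant in $M_2$ are non-amenable. The standard dichotomy arising from relative bi-exactness (non-amenability of a commuting pair is incompatible with failing to intertwine) then forces a non-zero partial isometry $u\in M_2$ with $u^*\theta(A_v^{(1)}) u \subset B_w^{(2)}$ for some $w \in V\Gamma_2$.

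Step 2 (Sharpening to a single vertex and counting orbits). The girth $\geq 5$ and untransvectability hypotheses on $\Gamma_2$ are then used to upgrade this embedding: because $\Gamma_2$ has no $3$-cycles or $4$-cycles, the star algebra $B_w^{(2)}$ decomposes as $A_w^{(2)} \,\overline{\otimes}\, L\langle H_{w'} : (w,w')\in E\Gamma_2\rangle$ with the second factor further splitting along distinct links; untransvectability prevents two distinct links from giving the same subalgebra. A refined intertwining argument internal to $B_w^{(2)}$ (modelled on \cite{hi, cdd2}) then shows $\theta(A_v^{(1)})$ must unitarily intertwine into a single vertex algebra $A_{f(v)}^{(2)}$. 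This assignment is well-defined on $G_1$-orbits (different representatives give $G_2$-conjugate targets), yielding a map $\bar f : G_1\backslash V\Gamma_1 \to G_2\backslash V\Gamma_2$. Running the symmetric argument for $\theta^{-1}$ produces an inverse, whence $|G_1\backslash V\Gamma_1| = |G_2\backslash V\Gamma_2|$.

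Step 3 (Recovering the edge structure under property (T)). When $H_i$ has Kazhdan's property (T), relative property (T) of the inclusion $A_v^{(i)} \subset M_i$ converts the approximate intertwining of Step 2 into an honest unitary conjugation $\theta(A_v^{(1)}) = v_0 A_{f(v)}^{(2)} v_0^*$, by Popa's rigidity technique. With vertex algebras identified on the nose, the edges of $\Gamma_i$ are read off from commutation: inside $(H_i)_{\Gamma_i}$, distinct vertex algebras $A_v^{(i)}, A_{v'}^{(i)}$ commute iff $(v,v')\in E\Gamma_i$, and the additional commutations picked up in $M_i$ come from $G_i$-translates. Transporting these commutation relations through $\theta$ and passing to the $G_i$-quotients produces an isomorphism $G_1\backslash\Gamma_1 \cong G_2\backslash\Gamma_2$, respecting loops and multiplicities.

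Main obstacle. The hardest step will be Step 2, namely ruling out that $\theta(A_v^{(1)})$ ``smears'' across the star $B_w^{(2)}$ rather than concentrating on one vertex algebra. The girth and untransvectability hypotheses are exactly what is needed to prevent such a collapse, but implementing this requires a delicate intertwining argument internal to each star algebra, keeping track of how the link decomposes as a graph product of smaller vertex algebras. This is the step where the combinatorial hypotheses on $\Gamma_i$ are genuinely used, and it is also where the proof must go beyond the mere relative bi-exactness supplied by Theorem B.
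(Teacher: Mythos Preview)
Your broad outline---use Theorem B to locate subalgebras, build an orbit bijection, then invoke Property (T) for the edges---matches the paper's strategy, but Steps 2 and 3 as written contain real gaps, and the paper's execution is different in both places.

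\textbf{Step 2 / orbit count.} The paper does \emph{not} show that a vertex algebra $\theta(A_v^{(1)})$ intertwines into a single vertex algebra $A_{f(v)}^{(2)}$; it works throughout at the level of \emph{star} algebras. The key lemma you are missing is a quasi-normalizer control statement (Lemma 5.3 in the paper): if $P\prec^s_M B_E$ minimally, then $\mathcal{QN}_{qMq}(Pq)''\prec^s B_{E\cup\mathrm{Lk}E}\rtimes G^E$. Starting from bi-exactness one gets $B_v r\prec C_E$ with $E\subset\mathrm{St}w$; since $B_{\mathrm{St}v}\subset\mathcal{QN}_M(B_v)''$, the lemma yields $B_{\mathrm{St}v}z\prec C_{E\cup\mathrm{Lk}E}$ for a suitable central projection $z$. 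Girth $\geq 5$ then forces either $E$ or $\mathrm{Lk}E$ to be a singleton, so $E\cup\mathrm{Lk}E\subset\mathrm{St}w'$ for some $w'$. Thus one obtains star-into-star intertwining $B_{\mathrm{St}v}\prec^s C_{\mathrm{St}w}$; untransvectability together with Lemma 5.2 makes the induced map on orbits well-defined and bijective. Your claimed vertex-to-vertex refinement inside $B_w^{(2)}$ is stronger than needed, and there is no obvious mechanism for it: inside $C_{\mathrm{St}w}\cong C_w\bar\otimes(\ast_{w'} C_{w'})$ a diffuse subalgebra need not intertwine into a single tensor factor.

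\textbf{Step 3 / edges.} Property (T) does not, in general, upgrade $\prec$ to an honest unitary conjugacy $\theta(A_v^{(1)})=v_0 A_{f(v)}^{(2)}v_0^{\ast}$ as you assert; Popa intertwining already gives a partial isometry and a corner homomorphism, and Property (T) alone does not remove the corner. The paper instead exploits Property (T) through the Ioana--Peterson--Popa amalgamated free product location theorem (Theorem 2.15): girth $\geq 5$ gives $B_{\mathrm{St}v}=\ast_{v'\in\mathrm{Lk}v,\,B_v}(B_v\bar\otimes B_{v'})$, and after transporting $B_{\mathrm{St}v}$ into $\Mat_n(C_{\mathrm{St}w})$ via the star-level intertwining, the Property (T) subfactor $\pi(B_v\bar\otimes B_{v'})$ must intertwine into one amalgam factor $C_w\bar\otimes C_{w'}$. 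Composing the two intertwinings gives $B_v\bar\otimes B_{v'}\prec^s_P C_w\bar\otimes C_{w'}$, i.e.\ a direct edge-to-edge relation; Lemma 5.2 then shows two such targets for the same source edge are $G_2$-conjugate. Your commutation-reading argument would require knowing the images are literally vertex algebras of $\Gamma_2$, which you have not established.
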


The proof of Theorem C relies on Theorem B and
an argument similar to the one in \cite[Section 6.1]{hi}. 

This immediately yields the following corollary. 

\begin{maincor}
Let $\mathbb{F}_n$ be the free group of rank $n$, and $\mathbb{F}_n\action T_n$ its left regular action on the Cayley graph of $\mathbb{F}_n$ with the canonical generators. Let $m, n\geqq 2$ be positive integers, and $H$ and $K$ infinite exact groups with Property (T). 
If the graph-wreath product $\mathrm{II}_1$ factors 
$L(H_{T_m}\rtimes \mathbb{F}_m)$ and $L(K_{T_n}\rtimes \mathbb{F}_n)$
are stably isomorphic, then $m=n$. 
\end{maincor}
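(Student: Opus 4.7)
The plan is to apply Theorem C directly to the actions $\mathbb{F}_m \action T_m$ and $\mathbb{F}_n \action T_n$ and then identify the resulting quotient graphs by hand. First I would verify the graph-theoretic hypotheses for $T_m$ with $m \geq 2$: as the Cayley graph of $\mathbb{F}_m$ on its canonical generators, $T_m$ is an infinite, connected, locally finite, $2m$-regular tree, so its girth is $\infty \geq 5$. Untransvectability follows from the tree structure, since any two distinct vertices share at most one common neighbor (otherwise a $4$-cycle would appear); thus among the at least four neighbors of any $v$ at least one is both distinct from $w$ and not adjacent to $w$. The free group $\mathbb{F}_m$ is infinite and bi-exact (being hyperbolic), and its left regular action on itself is free with a single vertex orbit. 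With $H$ and $K$ assumed infinite, exact, and i.c.c.\@ (the latter being implicit in the factoriality of the graph-wreath product), together with Property (T), all the hypotheses of Theorem C are satisfied.

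Theorem C then transfers the stable isomorphism $L(H_{T_m}\rtimes \mathbb{F}_m) \cong L(K_{T_n}\rtimes \mathbb{F}_n)$ into an isomorphism of quotient graphs $\mathbb{F}_m\backslash T_m \cong \mathbb{F}_n\backslash T_n$. It remains to compute each quotient explicitly. Transitivity of the vertex action leaves a single vertex in $\mathbb{F}_m\backslash T_m$. Edges of $T_m$ have the form $\{g,gs\}$ for $g\in\mathbb{F}_m$ and $s$ a canonical generator; a short check shows that two such edges are $\mathbb{F}_m$-equivalent precisely when their associated generators agree up to inversion, so the edges form exactly $m$ orbits. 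Each orbit contributes a loop at the unique vertex, so $\mathbb{F}_m\backslash T_m$ is a bouquet of $m$ loops, and similarly $\mathbb{F}_n\backslash T_n$ is a bouquet of $n$ loops. An isomorphism between such bouquets forces $m=n$.

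The whole argument is essentially a direct application of Theorem C combined with a transparent orbit count in a tree, so I do not anticipate any real obstacle; the only mild checks are the untransvectability of $T_m$ and, at the level of the statement, the verification that the i.c.c.\@ hypothesis on $H$ and $K$ needed by Theorem C is forced by the factoriality built into the corollary.
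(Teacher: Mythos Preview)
Your approach is correct and is exactly what the paper intends: the corollary is stated to follow immediately from Theorem C, and your verification of the graph-theoretic hypotheses for $T_m$ together with the quotient-graph computation (one vertex, $m$ loops) are precisely the details being elided.

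One caution on a side remark: your claim that the i.c.c.\ hypothesis on $H$ and $K$ is ``forced by the factoriality built into the corollary'' is not correct. By Lemma~2.14(2) the crossed product $L(H_{T_m}\rtimes\mathbb{F}_m)$ is a factor whenever the vertex isotropy groups are finite, independently of whether $H$ is i.c.c.; so factoriality gives you nothing here. The i.c.c.\ assumption is genuinely used in the proof of Theorem~C (to obtain $\mathcal{Z}(B_{\mathrm{St}v}'\cap pPp)=\mathbb{C}p$ via Lemma~2.14(5)), and it appears simply to be an implicit hypothesis that the corollary omits from its statement. You should treat it as an assumption rather than attempt to derive it.
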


Note that, though in Theorem C we obtained the rigidity result for
the quotient graph $G_i\backslash \Gamma_i$, the author does not know if any further rigidity results 
for the groups $G_i$ or the graphs $\Gamma_i$ are possible. 

Finally, as a byproduct, 
we obtain the following primeness result for
graph-wreath products, by combining Theorem B with the method in \cite[Section 7]{im}. 
Here, a \twoone factor $M$ is said to be
\emph{prime} if it cannot be written as the tensor product 
$M=P\overline{\otimes} Q$ of
two \twoone factors $P$, $Q$. 

\begin{mainthm}
Let $H$ be an infinite exact group, $\Gamma=(V\Gamma, E\Gamma)$ an infinite and locally finite graph, and $G\curvearrowright \Gamma$ an action of an infinite bi-exact group $G$ on $\Gamma$. If the isotropy groups of $G\curvearrowright V\Gamma$ are all finite and $G\action V\Gamma$ has finitely many orbits, then the graph-wreath product von Neumann algebra $L(H_{\Gamma}\rtimes G)$ is prime.
\end{mainthm}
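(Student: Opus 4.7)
The plan is to argue by contradiction, combining the relative bi-exactness from Theorem B with the spectral-gap/intertwining techniques of \cite[Section 7]{im}. Suppose $M := L(H_\Gamma \rtimes G) = P \bartensor Q$ with $P, Q$ \twoone factors; the goal is to derive a contradiction.

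By Theorem B, $M$ is bi-exact relative to $\{\mathcal{G}_v\}_{v\in V\Gamma}$, where $\mathcal{G}_v := \langle H_w \mid v = w \text{ or } (v,w) \in E\Gamma\rangle$. Applying the standard structural corollary of relative bi-exactness (cf.\@ Ozawa \cite{oza1}) to the commuting pair $(P, Q)$ in $M$ yields the following dichotomy: either $P$ is amenable relative to $L\mathcal{G}_v$ for some $v \in V\Gamma$, or $Q$ is amenable (hence hyperfinite, being a \twoone factor). The finite-orbit hypothesis on $G \action V\Gamma$ allows such a $v$ to be chosen from a fixed finite set of orbit representatives, so the conclusion is effective despite $V\Gamma$ being infinite.

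Next, following the method of \cite[Section 7]{im}, I would rule out both branches using the Bernoulli-type mixing of the $G$-action. Since $G$ is infinite and $\mathrm{Stab}_G(v)$ is finite, the orbit $Gv$ is infinite, and by local finiteness of $\Gamma$ the stars of $gv$ and $v$ are disjoint for all but finitely many $g$. Hence the translated subalgebras $L\mathcal{G}_{gv}$ are asymptotically free with respect to $L\mathcal{G}_v$ inside $LH_\Gamma$. A spectral-gap argument then upgrades the relative amenability ``$P$ amenable relative to $L\mathcal{G}_v$'' to a genuine Popa intertwining $P \prec_M L\mathcal{G}_v$, after which the commutant $Q = P' \cap M$ is forced, after cutting by a suitable projection, into the relative commutant of a corner of $L\mathcal{G}_v$ inside $M$. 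A direct computation in the graph product gives $(L\mathcal{G}_v)' \cap LH_\Gamma \subset LH_v$, and crossing with the finite group $\mathrm{Stab}_G(v)$ enlarges this only by a finite-index amount, so $Q$ cannot be a full tensor factor, contradicting $M = P \bartensor Q$. The alternative branch ($Q$ hyperfinite) is excluded by the same spectral-gap mechanism, whose mixing conclusions are incompatible with the existence of a hyperfinite tensor factor in $M$.

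The main obstacle will be the spectral-gap upgrades in the third paragraph. Here $\mathcal{G}_v$ is not a single Bernoulli coordinate but the star-shaped neighbourhood of $v$, so translates $L\mathcal{G}_{gv}$ may partially overlap when $gv$ lies close to $v$, and a careful quantitative control of this overlap is required. The local finiteness of $\Gamma$ together with the finite-stabilizer hypothesis are precisely the ingredients that guarantee the asymptotic disjointness along the infinite orbit $Gv$, which is what makes the argument of \cite{im} go through in the graph-wreath setting.
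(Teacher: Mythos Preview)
Your proposal has two genuine gaps.

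First, the detour through relative amenability and a ``spectral-gap upgrade'' is both unnecessary and unjustified. The structural consequence of relative bi-exactness used in the paper (Theorem~2.7) already yields Popa intertwining directly: if $Q'\cap M$ is nonamenable then $Q\prec_M B_{\mathrm{St}v}$ for some $v$. There is no intermediate relative-amenability statement to upgrade, and the asymptotic-freeness heuristic you describe is not what drives the argument. In the paper one simply takes $P$ nonamenable (possible since $M$ is nonamenable by Lemma~2.14(3)), applies Theorem~2.7 to $Q$, and obtains a projection $p\in P$ and a minimal finite $E\subset \mathrm{St}v$ with $Qp\prec^s_M B_E$ and $Qp\nprec_M B_{E'}$ for every proper $E'\subsetneq E$.

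Second, and more seriously, your passage from intertwining to a contradiction goes in the wrong direction. From $P\prec_M B_{\mathrm{St}v}$ you cannot force $Q$ into $(B_{\mathrm{St}v})'\cap M$: the intertwining only embeds a corner of $P$ as some $\theta(pPp)\subset qB_{\mathrm{St}v}q$, and the commutant of $\theta(pPp)$ can be much larger than that of $B_{\mathrm{St}v}$. (Your commutant computation is also incorrect: by Lemma~2.13(1) one has $(B_{\mathrm{St}v})'\cap B_\Gamma=\mathcal{Z}(B_{\mathrm{St}v})\bartensor B_{\mathrm{Lk}(\mathrm{St}v)}$, and $v\notin \mathrm{Lk}(\mathrm{St}v)$.) The paper instead exploits that $Q$ is a tensor factor, so its quasi-normalizer is all of $M$. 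The key input you are missing is Lemma~5.3 (a variant of \cite[Lemma~7.2]{im}): from $Qp\prec^s_M B_E$ with $E$ minimal one gets a projection $q\in pPp$ with
\[
qMq=\mathcal{N}_{qMq}(Qq)''\subset \mathcal{QN}_{qMq}(Qq)''\prec_M B_{E\cup\mathrm{Lk}E}\rtimes G^E.
\]
Since the isotropy $G^E$ is finite, this gives $qMq\prec_M B_\Gamma$, which is impossible because $G$ is infinite. The mixing intuition you invoke is morally what underlies Lemma~5.3, but it enters via quasi-normalizer control of the intertwined factor $Q$, not via commutant computations on $P$.
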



\subsection*{Acknowledgement}
The author would like to thank his supervisor Yasuyuki Kawahigashi for many valuable comments. 
The author also would like to thank Yusuke Isono 
for several fruitful discussions and especially 
for drawing his attention to \cite{hik}. This is a master thesis for the University of Tokyo and the author is supported by WINGS-FMSP program.

\section{Preliminaries}

Thorughout this paper, we assume that all groups are countable and discrete, and all graphs are (not necessarily finite) countable simple graphs except in the statement of Theorem C. 

For the basis of completely positive maps and tracial von Neumann algebras we refer to \cite{bo}
and \cite{ap} respectively. 

We use the following notation. 

For a tracial von Neumann algebra $(M, \tau)$, let $L^2M$ be its standard representation, and denote by $\| \cdot \|_2$ its norm 
induced by $\tau$; $\| x \|_2 = \tau(x^{\ast}x)^{\frac{1}{2}}$. 
We denote by $J : L^2M \to L^2M$ the antilinear isometry extending 
$M\ni x \mapsto x^{\ast}\in M$.  

We denote algebraic tensor products, minimal tensor products, and von Neumann algebraic tensor products by $\algtensor$, $\mintensor$, and $\bartensor$ respectively. 

For a von Neumann algebra $M$ and its subalgebra $B$, we denote by 
\begin{align}
\mathcal{N}_M(B) &= \{ u\in \mathcal{U}(M) \mid uBu^{\ast}=B \}, \\
\mathcal{QN}_M(B) &= \{ x\in M \mid \text{there are } x_1, \ldots x_n\in M \text{ such that }
xB\subset \sum_{i=1}^n Bx_i, Bx\subset \sum_{i=1}^nx_iB\}
\end{align}
the set of normalizers and quasi-normalizers of $B$ inside $M$,
respectively. 

\subsection{Intertwining-by-bimodule technique}

The following theorem of Popa, which deals with the position of two von Neumann subalgebras, has been an essential tool in the field for more than a decade. 

\begin{theorem}[\cite{pop3, pop1}]
Let $M$ be a tracial von Neumann algebra, $p_0\in M$ a projection, and let $P\subset p_0Mp_0$, $Q\subset M$ be its von Neumann subalgebras. 
Then the following are equivalent. 
\begin{enumerate}
\item There does not exist a net of unitaries $(u_i)_i\in \mathcal{U}(P)$ such that $\| E_Q(au_ib) \|_2 \to 0$ holds for every $a,b\in M$. 
\item There exists a $P$-$Q$-subbimodule of $p_0L^2M$ which is finitely generated as a right $Q$-module. 
\item There exist a positive integer $n$, a normal $\ast$-homomorphism $\pi : P\to \Mat_n(Q)$, and a partial isometry $v\in \Mat_{1,n}(M)$ such that 
$v\pi(x)=xv$ for every $x\in P$. 
\item There exist nonzero projections $p\in P$, $q\in Q$, a normal $\ast$-homomorphism $\theta:pPp\to qQq$, and a partial isometry $v\in pMq$ such that $v\theta(x)=xv$ for every $x\in pPp$.
\end{enumerate}

We denote $P\prec_M Q$ when one (or, equivalently, all) of the above holds, and $P\nprec_M Q$ otherwise. 
We often drop the subscript $M$ when the algebra in question is obvious. 

Moreover, even if the subalgebra $Q\subset q_0Mq_0$ does not share the unit with $M$, 
the equivalence of (1) and (4) still holds. We write in this case $P\prec_M Q$ as well. 

\end{theorem}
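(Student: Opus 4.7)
The plan is to prove the cycle of implications $(4) \Rightarrow (3) \Rightarrow (2) \Rightarrow (1) \Rightarrow (4)$, where the first three steps are relatively routine manipulations with right Hilbert $Q$-modules and the last is the analytic heart of the theorem, handled via Jones' basic construction $\langle M, e_Q \rangle$ equipped with its canonical faithful normal semifinite trace $\mathrm{Tr}$ characterized by $\mathrm{Tr}(x e_Q y) = \tau(xy)$ for $x, y \in M$. The final assertion for non-unital $Q \subset q_0 M q_0$ will be reduced to the unital case by embedding $Q$ into the unital subalgebra $Q \oplus \mathbb{C}(1 - q_0) \subset M$ and tracking how $E_Q$ is replaced by $q_0 E_{Q \oplus \mathbb{C}(1-q_0)}(\cdot) q_0$.

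For $(4) \Rightarrow (3)$, I would amplify $\theta : pPp \to qQq$ to $\pi : P \to \Mat_n(Q)$ by a trivial direct summand accommodating $1 - p$ and enlarge $v$ to the row vector $(v, 0, \ldots, 0) \in \Mat_{1,n}(M)$. For $(3) \Rightarrow (2)$, taking $\mathcal{H} \subset p_0 L^2 M$ to be the closed right $Q$-module generated by the components of $v$, the relation $v\pi(x) = xv$ forces $P$-invariance and finite generation is automatic. For $(2) \Rightarrow (1)$, fixing a right $Q$-orthonormal frame $(\eta_j)_{j=1}^n$ of $\mathcal{H}$ (with $\langle \eta_i, \eta_j \rangle_Q = \delta_{ij} r_j$) and a nonzero vector $\xi \in \mathcal{H}$ approximated by some $a p_0 b \in M p_0 M \cap \mathcal{H}$, the Parseval identity $\|u\xi\|_2^2 = \sum_j \|\langle \eta_j, u\xi\rangle_Q\|_2^2$ together with left $P$-invariance yields, after approximating each $\eta_j$ by an element of $Mp_0$, a uniform lower bound $\sum_j \|E_Q(\eta_j^* u a p_0 b)\|_2^2 \geq c > 0$ for all $u \in \mathcal{U}(P)$, contradicting (1).

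The core step $(1) \Rightarrow (4)$: by the standard diagonal argument, (1) is equivalent to the existence of $b_1, \ldots, b_n \in M$ and $\epsilon > 0$ such that $\sum_{i,j} \|E_Q(b_i^* u b_j)\|_2^2 \geq \epsilon$ for every $u \in \mathcal{U}(P)$. Form the positive element $T := \sum_i b_i e_Q b_i^* \in \langle M, e_Q \rangle^+$, which satisfies $\mathrm{Tr}(T) = \sum_i \tau(b_i b_i^*) < \infty$. A direct computation using the trace property of $\mathrm{Tr}$ and the defining identity $\mathrm{Tr}(x e_Q y) = \tau(xy)$ yields
\begin{equation}
\mathrm{Tr}(u T u^* T) = \sum_{i,j} \|E_Q(b_i^* u^* b_j)\|_2^2 \geq \epsilon \qquad (u \in \mathcal{U}(P)).
\end{equation}
Taking the $\|\cdot\|_{2,\mathrm{Tr}}$-closed convex hull $K$ of $\{u T u^* : u \in \mathcal{U}(P)\}$ inside $L^2(\langle M, e_Q \rangle, \mathrm{Tr})$, its unique norm-minimizing element $T_0$ is $\mathcal{U}(P)$-central by uniqueness of the minimizer and $\mathcal{U}(P)$-invariance of $K$, and $T_0 \neq 0$ because $\langle T_0, T \rangle_{\mathrm{Tr}} \geq \epsilon$. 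A spectral projection $e := 1_{[\delta, \|T_0\|]}(T_0)$ for small $\delta > 0$ then lies in $P' \cap \langle M, e_Q \rangle$ with $\mathrm{Tr}(e) < \infty$, and its range is a nonzero $P$-$Q$-subbimodule of $p_0 L^2 M$ of finite right $Q$-dimension, from which the data of (4) is extracted via the standard correspondence between $\mathrm{Tr}$-finite projections in $P' \cap \langle M, e_Q \rangle$ and finite-index $P$-intertwiners into $Q$.

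The main obstacle is the existence and $\mathcal{U}(P)$-centrality of the minimizer $T_0$. One must first verify that $\{u T u^* : u \in \mathcal{U}(P)\}$ is bounded in $L^2(\langle M, e_Q \rangle, \mathrm{Tr})$, which follows from $\mathrm{Tr}(T^2) \leq \|T\| \mathrm{Tr}(T) < \infty$ and the $\mathrm{Tr}$-isometry of $u \mapsto u T u^*$, so that the convex hull makes sense inside the Hilbert space; then strict convexity of the Hilbert norm delivers uniqueness and $\mathcal{U}(P)$-centrality simultaneously. A secondary subtlety is the translation from a $\mathrm{Tr}$-finite projection $e \in P' \cap \langle M, e_Q \rangle$ into the intertwining data $(\theta, v)$ of (4); this proceeds by choosing a Pimsner--Popa basis $(v_1, \ldots, v_n)$ for the range of $e$ as a right Hilbert $Q$-module in $p_0 L^2 M$ and reading off $\pi : P \to \Mat_n(Q)$ from the left $P$-action on this basis, after which the requisite cutdowns produce the projections $p \in P$, $q \in Q$ and the partial isometry $v$ of (4).
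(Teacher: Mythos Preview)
The paper does not prove this theorem; it is quoted from Popa's original papers \cite{pop3, pop1} and used as a black box throughout. Your outline is the standard argument (essentially the one in \cite{ap}) and is correct in substance. One small imprecision is worth flagging: in $(4)\Rightarrow(3)$, adding ``a trivial direct summand accommodating $1-p$'' only works when $p$ is central in $P$. In general one uses tracial comparison of projections in $P$ to find partial isometries $w_1,\dots,w_n\in P$ with $w_iw_i^*\leq p$ and $\sum_i w_i^*w_i$ equal to the central support $z$ of $p$ in $P$, then sets $\pi(x)_{ij}=\theta(w_ixw_j^*)$ for $x\in zPz$ (extended by zero on $(1-z)P$) and takes the row partial isometry $(w_1^*v,\dots,w_n^*v)\in\Mat_{1,n}(M)$.
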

In the situation above, we write $P\prec_M^s Q$ when $Pp'\prec_M Q$ holds for every nonzero projection $p'\in \mathcal{Z}(P'\cap pMp)$. 

We record several lemmas for later use. 

\begin{lemma}[\textup{\cite[Lemma 3.7]{vaes}}]
Let $M$ be a tracial von Neumann algebra and $P, Q, R\subset M$ its von Neumann subalgebras. If $P\prec_M Q$ and $Q\prec_M^s R$, then $P\prec_M R$. 
\end{lemma}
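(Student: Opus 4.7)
The plan is to argue by contraposition: assuming $P \prec_M Q$ and $P \nprec_M R$, I will produce a nonzero central projection $z \in \mathcal{Z}(Q' \cap M)$ with $Qz \nprec_M R$, contradicting $Q \prec^s_M R$.

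First, apply criterion (3) of the theorem above to $P \prec_M Q$ to fix an integer $n \geq 1$, a normal $\ast$-homomorphism $\pi \colon P \to \Mat_n(Q)$, and a nonzero partial isometry $v \in \Mat_{1,n}(M)$ with $xv = v\pi(x)$ for every $x \in P$. Standard manipulations let us arrange $q_0 := v^{\ast}v \in \pi(P)' \cap \Mat_n(Q)$, and then $v^{\ast}uv = \pi(u)q_0$ for each $u \in P$. Using $P \nprec_M R$, choose a net of unitaries $(u_k) \subset \mathcal{U}(P)$ with $\|E_R(au_kb)\|_2 \to 0$ for every $a, b \in M$; then $w_k := v^{\ast}u_kv = \pi(u_k)q_0$ is a unitary in $\pi(P)q_0 \subset q_0\Mat_n(Q)q_0$. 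Since $E_{\Mat_n(R)} = \mathrm{id}_{\Mat_n} \otimes E_R$ acts entrywise, an entrywise expansion of $Aw_kB = Av^{\ast}u_kvB$ for $A, B \in \Mat_n(M)$ yields $\|E_{\Mat_n(R)}(Aw_kB)\|_2 \to 0$. Hence $\pi(P)q_0 \nprec_{\Mat_n(M)} \Mat_n(R)$.

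Now I descend back to $Q$. Suppose toward contradiction that $Qz \prec_M R$ for every nonzero $z \in \mathcal{Z}(Q' \cap M)$. Via the canonical identifications $\Mat_n(Q)' \cap \Mat_n(M) = 1 \otimes (Q' \cap M)$ and $\mathcal{Z}(\Mat_n(Q)' \cap \Mat_n(M)) = 1 \otimes \mathcal{Z}(Q' \cap M)$, direct amplification of witnessing bimodules gives $\Mat_n(Q) \prec^s_{\Mat_n(M)} \Mat_n(R)$. A compression argument then produces a witnessing bimodule for $q_0\Mat_n(Q)q_0 \prec_{\Mat_n(M)} \Mat_n(R)$, and restricting the left action to the subalgebra $\pi(P)q_0 \subset q_0\Mat_n(Q)q_0$ via the same bimodule yields $\pi(P)q_0 \prec_{\Mat_n(M)} \Mat_n(R)$, contradicting the previous paragraph. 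Therefore some nonzero $z \in \mathcal{Z}(Q' \cap M)$ satisfies $Qz \nprec_M R$, proving the contrapositive.

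The main technical obstacle is the final compression step: extracting $q_0\Mat_n(Q)q_0 \prec_{\Mat_n(M)} \Mat_n(R)$ from $\Mat_n(Q) \prec^s_{\Mat_n(M)} \Mat_n(R)$. The natural approach takes a witnessing $\Mat_n(Q)$-$\Mat_n(R)$-subbimodule $\mathcal{K} \subset L^2\Mat_n(M)$ and compresses to $q_0\mathcal{K}$; the delicate point is ensuring $q_0\mathcal{K} \neq 0$ for a suitable choice of $\mathcal{K}$, which requires reformulating $\prec^s$ in terms of arbitrary (not only central) projections of the commutant, so that the central support of $q_0$ in the commutant can be used to select $\mathcal{K}$. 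The amplification half of the descent, by contrast, is routine once the bijection between central projections of $Q' \cap M$ and $\Mat_n(Q)' \cap \Mat_n(M)$ is invoked.
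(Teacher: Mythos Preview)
The paper does not give its own proof of this lemma; it is quoted verbatim from \cite[Lemma 3.7]{vaes}. So there is nothing to compare against except the standard argument, which your outline follows in spirit: pass from $P\prec_M Q$ to intertwining data $(\pi,v)$, transport a net of unitaries witnessing $P\nprec_M R$ through $v$, and contradict $Q\prec_M^s R$.

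There is, however, a genuine gap. You assert that ``standard manipulations let us arrange $q_0:=v^{\ast}v\in\pi(P)'\cap\Mat_n(Q)$.'' One can indeed arrange $q_0\in\pi(P)'\cap\pi(1)\Mat_n(M)\pi(1)$, but there is no standard way to force $q_0$ into $\Mat_n(Q)$; in general $v^{\ast}v$ lives only in the ambient algebra. Without this, the expression $q_0\Mat_n(Q)q_0$ is not a corner of $\Mat_n(Q)$, and your compression step~(6) does not make sense as written. You do flag the compression as the delicate point, but the reformulation of $\prec^s$ via arbitrary projections of the \emph{commutant} $\Mat_n(Q)'\cap\Mat_n(M)$ does not help directly either, since $q_0$ need not lie there.

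The repair is to bypass $q_0$ and work instead with a projection genuinely in the commutant. Concretely, let $z\in\Mat_n(Q)'\cap\Mat_n(M)$ be the smallest projection dominating $q_0$ (equivalently, $z=\bigvee_{u\in\mathcal{U}(\Mat_n(Q))}uq_0u^{\ast}$). Your computation already gives $\|E_{\Mat_n(R)}(A\,\pi(u_k)\,q_0B)\|_2\to 0$; since the unitaries $\pi(u_k)$ lie in $\Mat_n(Q)$ and $q_0\le z$, a short density/averaging argument upgrades this to $\|E_{\Mat_n(R)}(A\,\pi(u_k)z\,B)\|_2\to 0$ for all $A,B$, whence $\Mat_n(Q)z\nprec\Mat_n(R)$ via the unitaries $\pi(u_k)z+(z-\pi(1)z)$. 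This produces the required nonzero $z$ contradicting $\Mat_n(Q)\prec^s\Mat_n(R)$, and hence $Q\prec^s_M R$.
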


\begin{lemma}[\textup{\cite[Remark 3.8]{vaes}}]
Let $M$ be a tracial von Neumann algebra, $P\subset pMp$ a von neumann subalgebra, and $Q, Q_1, \ldots, Q_n\subset M$ von Neumann subalgebras. If $P\prec_M Q$ and $P\nprec_M Q_i$ for each $i\in \{ 1,2,\ldots, n\}$, then 
there exist a positive integer $n$, $\ast$-homomorphism $\pi : P\to \Mat_n(Q)$ and a partial isometry $v\in \Mat_{1,n}(M)$ such that 
$v\pi(x)=xv$ for every $x\in P$ and $\pi(pPp)\nprec_{\Mat_n(M)} Q_i$ for each $i$, as in Theorem 2.1 (3). 
Likewise, there exist nonzero projections $p\in P$, $q\in Q$, a normal $\ast$-homomorphism $\theta:pPp\to qQq$, and a partial isometry $v\in pMq$ such that $v\theta(x)=xv$ for every $x\in pPp$
and $\theta(pPp)\nprec_M Q_i$ for each $i$. 
\end{lemma}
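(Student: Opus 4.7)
The plan is an argument by contradiction combined with Zorn's lemma and a composition of intertwiners. I will work with characterization (4) of Theorem 2.1 (the corner version); the matrix formulation (3) then follows by the equivalence of (3) and (4).

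Suppose for contradiction that no intertwiner data $(p',q',\theta,v)$ witnessing $P\prec_M Q$ as in Theorem 2.1 (4) can satisfy $\theta(p'Pp')\nprec_M Q_i$ for every $i$. That is, for every such quadruple there is some $i\in\{1,\ldots,n\}$ with $\theta(p'Pp')\prec_M Q_i$. I will derive $P\prec_M Q_i$ for some $i$, contradicting the hypothesis $P\nprec_M Q_i$.

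The heart of the proof is composition of intertwiners. Given $v\theta(x)=xv$ on $p'Pp'$ and a further intertwiner $w\rho(y)=yw$ on $e_1\theta(p'Pp')e_1$ (with $e_1\in\theta(p'Pp')$, $e_2\in Q_i$, $w\in e_1 Me_2$), pick a projection $p''\leq p'$ in $p'Pp'$ with $\theta(p'')=e_1$ (possible after shrinking $e_1$, using normality of $\theta$). A direct calculation on $x\in p''Pp''$ gives $(vw)(\rho\circ\theta)(x)=v\theta(x)w=xvw$, so provided $vw\neq 0$ this witnesses $p''Pp''\prec_M Q_i$ and hence $P\prec_M Q_i$. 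To guarantee $vw\neq 0$, take adjoints in $v\theta(x)=xv$ to obtain $v^*x=\theta(x)v^*$ and hence $v^*v\in\theta(p'Pp')'\cap q'Mq'$; in particular $v^*v$ commutes with $e_1$. One then arranges the further intertwiner so that $v^*ve_1\neq 0$ -- this is possible by decomposing $\theta(p'Pp')\prec_M Q_i$ into a strong-intertwining central summand (via a preliminary Zorn step applied inside $\theta(p'Pp')'\cap q'Mq'$) and noting that $v^*v\neq 0$ must meet this summand. With $v^*ve_1\neq 0$ one computes $(ve_1)^*(ve_1)=e_1v^*ve_1=v^*ve_1\neq 0$, so $ve_1\neq 0$ and thus $vw=ve_1w\neq 0$.

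To handle the possibility that different choices of $(p',q',\theta,v)$ give different $i$'s, and to ensure the intertwiner covers $P$ rather than only a corner, invoke Zorn's lemma on families $\{(p_\alpha',q_\alpha',\theta_\alpha,v_\alpha)\}_\alpha$ of intertwiners with pairwise orthogonal left supports $v_\alpha v_\alpha^*\in P'\cap pMp$. Maximality together with $P\prec_M Q$ forces $\sum_\alpha v_\alpha v_\alpha^*$ to have central support $p$ in $P'\cap pMp$ (otherwise $P\prec_M Q$ would produce an intertwiner on the complement, contradicting maximality). Finiteness of $\{1,\ldots,n\}$ and pigeonhole then yield a single $j$ such that the aggregated corner-wise intertwiners assemble into a nonzero intertwiner of a corner of $P$ into $Q_j$, hence $P\prec_M Q_j$ -- the desired contradiction. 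The hard part will be the nonzero-composition step, i.e., arranging $(e_1,e_2,\rho,w)$ compatibly with the given $v$ so that $v^*ve_1\neq 0$; this hinges on the strong-intertwining decomposition of $\theta(p'Pp')\prec_M Q_i$ and the placement of $v^*v$ in the relative commutant $\theta(p'Pp')'\cap q'Mq'$.
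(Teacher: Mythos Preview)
The paper does not supply its own proof of this lemma; it is recorded without argument as a citation of \cite[Remark 3.8]{vaes}. So there is no paper proof to compare against, and I will simply assess your proposal on its merits.

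Your overall strategy---assume every intertwiner $(p',q',\theta,v)$ for $P\prec_M Q$ satisfies $\theta(p'Pp')\prec_M Q_i$ for some $i$, then compose intertwiners to force $P\prec_M Q_i$---is the standard one, and the observation that $v^*v\in\theta(p'Pp')'\cap q'Mq'$ is the right starting point. However, there is a genuine gap in the ``nonzero composition'' step. You argue that once $v^*ve_1\neq 0$ you get $ve_1\neq 0$ and ``thus $vw=ve_1w\neq 0$''. This last implication is false in general: $(vw)^*(vw)=w^*(v^*v)w$, and nothing prevents $w$ from lying in the kernel of left multiplication by $v^*v$ even when $v^*ve_1\neq 0$. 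The projection $e_1$ lives in $\theta(p'Pp')$ while the support of $v^*v$ and the left support $ww^*$ of $w$ live in the relative commutant $\theta(p'Pp')'\cap q'Mq'$; knowing that $e_1$ meets the support of $v^*v$ says nothing about whether $ww^*$ does. Your appeal to a strong-intertwining decomposition inside $\theta(p'Pp')'\cap q'Mq'$ is the right instinct, but the assertion that ``$v^*v$ must meet this summand'' is unjustified: the central projection $z_i$ witnessing $\theta(p'Pp')z_i\prec^s Q_i$ could be orthogonal to the support of $v^*v$. What that situation actually tells you is that $\theta(p'Pp')s\nprec_M Q_i$ for $s$ the support of $v^*v$---useful, but not yet the conclusion, since $s\notin Q$ and you cannot simply replace $\theta$ by $\theta(\cdot)s$.

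Separately, the Zorn/pigeonhole step is both unnecessary and does not do what you claim. A single intertwiner $(p',q',\theta,v)$ with $\theta(p'Pp')\prec_M Q_i$ would already yield $P\prec_M Q_i$ (once the composition is made nonzero), contradicting the hypothesis; there is no need to aggregate a maximal family and invoke pigeonhole on the index $i$. Moreover, even with a maximal orthogonal family $\{v_\alpha\}$, different $\alpha$'s may correspond to different $i$'s, and pigeonhole only gives you infinitely many $\alpha$'s sharing an index, not a single nonzero intertwiner $P\prec_M Q_j$. The way to fix the argument is to work more carefully with the support $s$ of $v^*v$: one shows that $\theta(p'Pp')s\nprec_M Q_i$ for every $i$ (otherwise pushing a bimodule forward by $v$ gives $P\prec_M Q_i$), and then one must still produce from this a genuine intertwiner into $Q$ whose image has the non-embedding property---this last step is where the actual work in Vaes's remark lies, and your sketch does not address it.
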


\begin{lemma}[\textup{\cite[Remark 3.3]{vaes}}]
Let $M$ be a tracial von Neumann algebra, $P\subset pMp$ a von Neumann subalgebra, and $Q_1, \cdots, Q_n, \cdots \subset M$ a countable number of von Neumann subalgebras. If $P\nprec_M Q_i$ holds for each $i\in \mathbb{N}$, then there is a net of unitaries $(u_j)_j\in \mathcal{U}(P)$ such that $\| E_{Q_i}(au_jb) \|_2 \to 0$ holds for every $i$ and $a,b\in M$. 
\end{lemma}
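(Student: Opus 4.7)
The plan is to reduce the countable-family statement to the corresponding finite-family analogue via a direct-sum diagonalization argument, and then to assemble the single universal net through a doubly-indexed construction.

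First I would establish the finite-family version: if $P \nprec_M Q_i$ for $i = 1, \ldots, n$, then there exists a single net of unitaries in $\mathcal{U}(P)$ witnessing decay against all the $Q_i$ simultaneously. Form the tracial direct sum $\widetilde M = \bigoplus_{i=1}^n M$ with normalized trace $\widetilde\tau = \frac{1}{n}\sum_i \tau$, let $\widetilde Q = \bigoplus_{i=1}^n Q_i \subset \widetilde M$, and let $\widetilde P = \Delta(P) \subset \widetilde p \widetilde M \widetilde p$ (with $\widetilde p = \Delta(p)$) be the diagonal copy. The key implication is
\[
P \nprec_M Q_i \text{ for all } i \leq n \quad \Longrightarrow \quad \widetilde P \nprec_{\widetilde M} \widetilde Q,
\]
which I would prove by contrapositive using Theorem 2.1(4): a witnessing tuple $(\widetilde p_0 = \Delta(p_0),\, \widetilde q = (q_i),\, \theta = (\theta_i),\, \widetilde v = (v_i))$ decomposes coordinatewise into $v_i\, \theta_i(x) = x\, v_i$. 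Since $\widetilde v \neq 0$, some $v_{i_0}\neq 0$, and after cutting by $\theta_{i_0}(p_0) \leq q_{i_0}$ one reads off a Theorem 2.1(4) witness for $P \prec_M Q_{i_0}$, contradicting the hypothesis. Applying Theorem 2.1(1) to $\widetilde P \nprec_{\widetilde M} \widetilde Q$ produces a net $(\Delta(u_j))_j \in \mathcal{U}(\widetilde P)$ with $\| E_{\widetilde Q}(\widetilde a\, \Delta(u_j)\, \widetilde b) \|_2 \to 0$; since $\| E_{\widetilde Q}(\widetilde a\, \widetilde u\, \widetilde b)\|_2^2 = \frac{1}{n}\sum_i \| E_{Q_i}(a_i u b_i)\|_2^2$, the underlying net $(u_j)_j \in \mathcal{U}(P)$ delivers the required simultaneous decay for the finite family.

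For the full countable statement, I would then construct a net indexed by the directed set $I$ of triples $(n, F, \varepsilon)$ with $n \in \mathbb{N}$, $F \subset M$ finite, and $\varepsilon > 0$, ordered by $n_1 \leq n_2$, $F_1 \subseteq F_2$, and $\varepsilon_2 \leq \varepsilon_1$. For each such triple, the finite-family version applied to $\{Q_1, \ldots, Q_n\}$ produces some $u_{(n,F,\varepsilon)} \in \mathcal{U}(P)$ with $\| E_{Q_i}(a\, u_{(n,F,\varepsilon)}\, b) \|_2 < \varepsilon$ for every $i \leq n$ and $a, b \in F$. Given any fixed $i_0 \in \mathbb{N}$, $a, b \in M$, and $\varepsilon_0 > 0$, the triple $(i_0, \{a, b\}, \varepsilon_0) \in I$ serves as an index beyond which $\| E_{Q_{i_0}}(a\, u_{(n,F,\varepsilon)}\, b) \|_2 < \varepsilon_0$, which is precisely the claimed convergence.

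The main obstacle is the first step, namely verifying that a $\prec$-witness for the diagonal pair $(\widetilde P, \widetilde Q)$ correctly decomposes to give a single-coordinate witness. The subtle points are that the components $\theta_i$ need not be unital (so one must use $\theta_{i_0}(p_0)$ rather than $q_{i_0}$ as the target projection in $Q_{i_0}$) and that one has to argue the cut partial isometry remains nonzero so that Theorem 2.1(4) applies cleanly. Once this coordinatewise decomposition is confirmed, the remainder of the argument is a routine net-theoretic bookkeeping.
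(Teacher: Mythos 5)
Your proof is correct: the diagonal direct-sum reduction (with the coordinatewise decomposition of a Theorem 2.1(4) witness, including the care about non-unital components $\theta_i$ and the nonvanishing of the cut partial isometry) and the subsequent net indexed by triples $(n,F,\varepsilon)$ are exactly the standard argument. The paper itself states this lemma as a quoted result from the cited reference and gives no proof, and your argument is essentially the one given there, so there is nothing further to compare.
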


\subsection{Relative bi-exactness}\label{biexact}

Let $G$ be a group, and $\mathcal{G}$ a family of \textbf{subsets} of $G$. 
First we fix some terminology.
\begin{itemize} 

\item A subset $T$ of $G$ is called \emph{small relative to $\mathcal{G}$} if there is a finite number of elements $a_1, a_2, \ldots, a_m, b_1, b_2, \ldots, b_m\in G$ and $S_1, S_2, \ldots , S_m\in \mathcal{G}$ such that $T\subset \bigcup_i a_iS_ib_i$. 

\item A complex-valued function $f\in \ell^{\infty}(G)$ 
satisfies $f(g)\to 0 \text{ } (g\to \infty / \mathcal{G})$ when, 
for every $\varepsilon > 0$, the set 
$\{ g\in G \mid |f(g)|>\varepsilon \}$
is small relative to $\mathcal{G}$. 

The ideal of $\ell^{\infty} G$, consisting of all functions $f$ which satisfy $f\in \ell^{\infty}(G)$ 
is denoted $c_0(G, \mathcal{G})$. 

\item We define a $C^{\ast}$-algebra $\mathcal{K}(\mathcal{G})$ by 
$\mathcal{K}(\mathcal{G}):=\overline{c_0(G, \mathcal{G})B(\ell^2(G))c_0(G, \mathcal{G})}^{\|\cdot\|}$. 
Note that the multiplier algebra $\mathcal{M}(\mathcal{K}(G))\subset \mathcal{B}(\ell^2 G)$ of $\mathcal{K}(\mathcal{G})$ contains $\ell^{\infty} G$, $C^{\ast}_{\lambda}(G)$, and $C^{\ast}_{\rho}(G)$. 
\end{itemize}

\begin{definition}
We say that the group $G$ is \emph{bi-exact relative to $\mathcal{G}$} if there is a map $\mu : G\to \mathrm{Prob}(G)$ such that 
\[
\| \mu (gsh) - g.\mu(s) \|_1 \to 0 \text{ }(s\to \infty / \mathcal{G})
\]
holds for every $g,h\in G$. 
\end{definition}

This property has been related to the (AO)-type property and the concept of amenable actions. 

\begin{proposition}[\textup{\cite[Proposition 15.1.4, 15.2.3]{bo}}]
Let $G$ and $\mathcal{G}$ be as above. Then the following
conditions are equivalent.  
\begin{itemize}
\item The group $G$ is bi-exact relative to $\mathcal{G}$. 
\item There exists a unital completely positive (u.c.p.\@ for short) map 
\[
\theta : C^{\ast}_{\lambda}(G) \mintensor C^{\ast}_{\rho}(G) 
\to B(\ell^2(G))
\]
such that $\theta(a\otimes b)-ab \in \mathcal{K}(\mathcal{G})$. 
\item The action 
\[
G\curvearrowright S_{\mathcal{G}}(G):=\{ f\in \ell^{\infty}(G)\mid f(\bullet)-f(\bullet \cdotp t)\in c_0(G, \mathcal{G}) (\forall t\in G)\}
\] is amenable. 
\item The left-right regular action of $G\times G$ on $\ell^{\infty}(G)/c_0(G,\mathcal{G})$ is amenable. 
\end{itemize} 
\end{proposition}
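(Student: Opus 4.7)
The plan is to prove the four conditions equivalent in the cycle (i) $\Leftrightarrow$ (iv) $\Rightarrow$ (iii) $\Rightarrow$ (ii) $\Rightarrow$ (i), following the standard dictionary between bi-exactness, topological amenability, and (AO)-type algebraic conditions developed in \cite{bo}.

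For (i) $\Leftrightarrow$ (iv), I would observe that a map $\mu : G \to \mathrm{Prob}(G)$ is the same datum as a norm-one positive element of $\ell^\infty(G, \ell^1(G))$, and the condition $\|\mu(gsh) - g.\mu(s)\|_1 \to 0$ as $s \to \infty/\mathcal{G}$ says exactly that $\mu$ descends to a $G \times G$-equivariant approximate invariant mean on $\ell^\infty(G)/c_0(G,\mathcal{G})$. A Day-type convex combination argument in the quotient then converts the existence of such an approximate invariant mean into topological amenability of the left-right $G \times G$ action, which is (iv). Next, (iv) $\Rightarrow$ (iii) is essentially tautological: by construction $S_\mathcal{G}(G)$ is the subalgebra of right-translation invariants inside $\ell^\infty(G)/c_0(G,\mathcal{G})$, so amenability of the full $G \times G$ action restricts to amenability of the left $G$-action on this invariant subalgebra.

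For (iii) $\Rightarrow$ (ii), amenability of $G \curvearrowright S_\mathcal{G}(G)$ furnishes a net of finitely supported positive functions $\xi_i : G \to \ell^2(G)_+$ of norm $1$ with $\|\xi_i(gs) - \lambda_g \xi_i(s)\|_2 \to 0$ for each $g$ as $s \to \infty/\mathcal{G}$. Using $C^{\ast}_{\rho}(G) = J C^{\ast}_{\lambda}(G) J$ to realize both commuting copies on the same Hilbert space, I would treat the $\xi_i$ as Stinespring vectors: define finite-rank u.c.p. maps $\theta_i$ on the algebraic tensor product via a formula of the shape $\langle \theta_i(a \otimes b)\delta_s, \delta_t\rangle = \langle (a \otimes Jb^{\ast}J)(\xi_i(s)\otimes \xi_i(s)), \delta_t \otimes \delta_t\rangle$, pass to a weak-$\ast$ accumulation point $\theta$, and verify that the approximate equivariance of $\xi_i$ forces $\theta(\lambda_g \otimes \rho_h) - \lambda_g \rho_h \in \mathcal{K}(\mathcal{G})$. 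The nontrivial technical point is that $\theta$ is well defined on the \emph{minimal} tensor product rather than merely the algebraic one, which uses exactness of $C^{\ast}_{\lambda}(G)$; this is itself a consequence of amenability of the action on $S_\mathcal{G}(G)$, via nuclearity of $C(X) \rtimes_r G$ for a compact amenable $G$-space $X$.

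Finally, for (ii) $\Rightarrow$ (i), composing $\theta$ with the canonical conditional expectation $B(\ell^2 G) \to \ell^\infty(G)$ and evaluating against $\lambda_g \otimes \rho_h$ produces a net of states on $\ell^\infty(G)$ that are equivariant modulo $c_0(G,\mathcal{G})$, from which a Radon--Nikodym disintegration extracts the desired $\mu : G \to \mathrm{Prob}(G)$. I expect the main obstacle to be the implication (iii) $\Rightarrow$ (ii), since moving from the algebraic to the minimal tensor product requires the exactness input together with a careful application of Arveson's extension theorem, and this is also the step most sensitive to the combinatorial structure of the family $\mathcal{G}$.
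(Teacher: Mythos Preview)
The paper does not give its own proof of this proposition; it simply records the statement and cites \cite[Proposition 15.1.4, 15.2.3]{bo} for the proof. Your proposal is essentially a sketch of the argument from that reference, so in that sense it aligns with what the paper does, namely defer to Brown--Ozawa.

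That said, a couple of points in your sketch deserve tightening if you intend to write out a full proof. In (iv) $\Rightarrow$ (iii) you describe $S_{\mathcal{G}}(G)$ as the subalgebra of right-translation invariants in $\ell^{\infty}(G)/c_0(G,\mathcal{G})$; this is the preimage of that subalgebra in $\ell^{\infty}(G)$, and amenability of the restricted action follows from the existence of a $G\times G$-equivariant conditional expectation onto it, not merely from restriction. In (iii) $\Rightarrow$ (ii), the construction of $\theta$ in Brown--Ozawa does not proceed by building finite-rank $\theta_i$ and taking a weak-$\ast$ limit as you suggest; rather one uses nuclearity of $S_{\mathcal{G}}(G)\rtimes_r G$ to obtain a u.c.p.\ lift of the multiplication map $C^{\ast}_{\lambda}(G)\otimes_{\min} C^{\ast}_{\rho}(G) \to \mathcal{M}(\mathcal{K}(\mathcal{G}))/\mathcal{K}(\mathcal{G})$ via the Choi--Effros lifting theorem. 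Your Stinespring-vector formula is not obviously a u.c.p.\ map on the minimal tensor product, and the passage to a weak-$\ast$ accumulation point would lose control of the $\mathcal{K}(\mathcal{G})$ error term. Finally, in (ii) $\Rightarrow$ (i), the extraction of $\mu$ is more delicate than a single Radon--Nikodym step: one typically composes with the diagonal compression to $\ell^{\infty}(G)$, uses the resulting state-valued function, and then approximates by finitely supported probability measures via a separability argument.
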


When $\mathcal{G}$ consists only of finite subsets, 
we write $S_{\mathcal{G}}(G)$ by $S(G)$, 
and we simply call $G$ bi-exact if one of the conditions above holds. 

For two families of subgroups $\mathcal{G}$ and $\mathcal{G}'$, 
let $\mathcal{G}\wedge \mathcal{G}'$ be the family of the subsets of the form $F\cap F'$, 
where $F$, $F'$ are small relative to $\mathcal{G}$, $\mathcal{G}'$, respectively.  
Note that, by definition, 
$\mathcal{K}(\mathcal{G}\wedge\mathcal{G}')=\mathcal{K}(\mathcal{G})\cap\mathcal{K}(\mathcal{G}')$. 

We record the following solidity type result:

\begin{theorem}[\cite{bo}, Theorem 15.1.5]
Let $G$ be an infinite i.c.c. discrete group and 
$\mathcal{G}$ a family of subgroups of $G$. 
Suppose that $G$ is bi-exact relative to $\mathcal{G}$. 
Denote $M=LG$. 

Let $n$ be a positive integer and $p\in\Mat_n (M)=\Mat_n(\C)\otimes M$ a projection. 
Then, for any von Neumann subalgebra $P$ of $p\Mat_n(M)p$ satisfying
$P\nprec_{\Mat_n(M)} 1\otimes LH$ for every $H\in \mathcal{G}$, the relative commutant 
$P'\cap L\Gamma$ is amenable. 
\end{theorem}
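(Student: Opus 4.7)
The plan is to follow Ozawa's $C^{\ast}$-algebraic approach to (relative) solidity. I first reduce to the case $n=1$, $p=1$ by standard matrix amplification: bi-exactness relative to $\mathcal{G}$ is preserved under passage to $\Mat_n(M)$, with $\mathcal{K}(\mathcal{G})$ amplifying to $\Mat_n(\mathcal{K}(\mathcal{G}))$. The goal is then to show that $Q := P'\cap M$ is amenable, and by the Connes--Haagerup criterion it suffices to produce a $Q$-central state on $B(L^2 M)$ extending the trace on $Q$.

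By Proposition 2.7, relative bi-exactness provides a u.c.p.\@ map $\theta \colon C^{\ast}_{\lambda}(G) \mintensor C^{\ast}_{\rho}(G) \to B(\ell^2 G)$ with $\theta(a\otimes b) - ab \in \mathcal{K}(\mathcal{G})$. Identifying $\ell^2 G = L^2 M$ and realizing the right action of $M^{\mathrm{op}}$ via $JMJ$, the map $\theta$ becomes, modulo $\mathcal{K}(\mathcal{G})$, a $\ast$-homomorphic lift of the standard representation $\pi(a\otimes b^{\mathrm{op}}) = aJb^{\ast}J$ of $M \otimes M^{\mathrm{op}}$ factoring through the minimal tensor product. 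In parallel, Lemma 2.4 applied to $P\nprec LH$ for every $H\in\mathcal{G}$ yields a net $(u_i)_i \subset \mathcal{U}(P)$ with $\|E_{LH}(au_ib)\|_2 \to 0$ for all $a,b\in M$ and $H\in\mathcal{G}$.

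The technical heart of the argument is the claim that the vectors $\hat{u_i} \in L^2 M$ asymptotically annihilate $\mathcal{K}(\mathcal{G})$: for every $T\in \mathcal{K}(\mathcal{G})$, $\lim_i \langle T\hat{u_i}, \hat{u_i}\rangle = 0$. By continuity it suffices to treat generators $T = f_1 S f_2$ with $f_1,f_2 \in c_0(G,\mathcal{G})$ and $S\in B(\ell^2 G)$; then $|\langle T\hat{u_i},\hat{u_i}\rangle| \le \|S\|\cdot\|f_1^{\ast}\hat{u_i}\|_2\cdot\|f_2\hat{u_i}\|_2$, and for each $f\in c_0(G,\mathcal{G})$ I cover its essential support by finitely many sets $a_k H_k b_k$ ($H_k\in\mathcal{G}$) and dominate $\|f\hat{u_i}\|_2$ by a finite sum of expectation norms of the form $\|E_{LH_k}(a_k^{-1}u_i b_k^{-1})\|_2$, each of which tends to zero by choice of the net.

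Granted this, a weak-$\ast$ cluster point $\omega$ of the vector states $T\mapsto \langle T\hat{u_i},\hat{u_i}\rangle$ is a state on $B(\ell^2 G)$ vanishing on $\mathcal{K}(\mathcal{G})$ and restricting to $\tau$ on $M$ (since $\tau$ is tracial and $u_i$ is unitary). Composing with $\theta$ gives a state on $C^{\ast}_{\lambda}(G)\mintensor C^{\ast}_{\rho}(G)$ which extends by normal extension in each tensor factor to a state $\psi$ on $B(L^2 M)$. The $Q$-centrality of $\psi$ follows from the combination of (i) $u_i\in P$ commutes with $q\in Q = P'\cap M$, so in the limit the vector states are $Q$-central modulo terms in $\mathcal{K}(\mathcal{G})$, and (ii) $\theta$ is a $\ast$-homomorphism modulo $\mathcal{K}(\mathcal{G})$, so the images of $M$ and $M^{\mathrm{op}}$ commute in $B(L^2 M)/\mathcal{K}(\mathcal{G})$. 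The main obstacle I expect is the technical claim of the third paragraph: translating the coset-cover definition of ``smallness relative to $\mathcal{G}$'' into an analytic estimate on the conditional expectations $E_{LH}$ requires careful bookkeeping in handling the generators of $\mathcal{K}(\mathcal{G}) = \overline{c_0(G,\mathcal{G})B(\ell^2 G)c_0(G,\mathcal{G})}^{\|\cdot\|}$.
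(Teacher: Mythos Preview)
The paper does not prove this statement: it is quoted as a preliminary result from \cite[Theorem~15.1.5]{bo} with no argument given, so there is no ``paper's own proof'' to compare against.

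That said, your sketch follows the standard Ozawa argument from the cited reference, and the first three paragraphs are essentially correct. In particular, the asymptotic vanishing $\langle T\hat u_i,\hat u_i\rangle\to 0$ for $T\in\mathcal{K}(\mathcal{G})$ works exactly as you describe: for $f\in c_0(G,\mathcal{G})$ one approximates by a function supported on a small set, covers that set by finitely many double cosets $a_kH_kb_k$, and uses the identity $\|1_{aHb}\hat u_i\|_2=\|E_{LH}(\lambda_{a^{-1}}u_i\lambda_{b^{-1}})\|_2$.

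Your final paragraph, however, has a genuine gap. The phrase ``extends by normal extension in each tensor factor to a state $\psi$ on $B(L^2M)$'' is not a well-defined operation: $C^{\ast}_{\lambda}(G)\mintensor C^{\ast}_{\rho}(G)$ does not sit inside $B(L^2M)$ in a way that makes this meaningful. More seriously, the limit state $\omega$ on $B(\ell^2G)$ is \emph{not} $Q$-central for arbitrary $T\in B(\ell^2G)$; the commutation $[u_i,q]=0$ only gives $\omega(qx)=\omega(xq)$ for $x\in M$, not for general bounded operators. What the Brown--Ozawa proof actually does is use the min-continuity of the multiplication map to show that $\psi(a\otimes b):=\omega(a\,JbJ)$ defines a bona fide state on $C^{\ast}_{\lambda}(G)\mintensor C^{\ast}_{\rho}(G)$, observe that $C^{\ast}_{\lambda}(G)\otimes 1$ lies in its multiplicative domain (since $\psi|_{C^{\ast}_{\lambda}(G)\otimes 1}=\tau$ is a trace), and then Arveson-extend from $1\mintensor C^{\ast}_{\rho}(G)\subset B(\ell^2G)$ to obtain an $M$-central (hence $Q$-central) state on $B(\ell^2G)$ that restricts to $\tau$ on $M$. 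The multiplicative-domain step is what you are missing; your point (i) alone is not enough.
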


\subsection{Graphs and graph products for groups}\label{chapgraph}

First we fix the notation for graphs. 

\begin{notation}
A \emph{(simple) graph} $\Gamma=(V\Gamma, E\Gamma)$ consists of a set of vertices $V\Gamma$ and a set of edges $E\Gamma\subset V\Gamma\times V\Gamma$ which satisfies $(v,v)\notin E\Gamma$ and $(v,w)\in E\Gamma \Longrightarrow (w,v)\in E\Gamma$ for any $v,w\in V\Gamma$. We call two vertices $v,w\in V\Gamma$ are \emph{adjacent} when $(v,w)\in E\Gamma$. 

For a graph $\Gamma$ and $v\in V\Gamma$, we write
\begin{align}
\mathrm{Lk}_{\Gamma}v &= \{ w\in V\Gamma \mid (v,w)\in E\Gamma \}, \\ 
\mathrm{St}_{\Gamma}v &= \mathrm{Lk}_{\Gamma}v\cup\{ v \}. 
\end{align}
and call $|\mathrm{Lk}_{\Gamma}v|$ the \emph{degree} of $v$. 
A graph is said to be \emph{locally finite} if the degree of each vertex is finite. 

We denote $\mathrm{Lk}_{\Gamma}E=\cap_{v\in E} \mathrm{Lk}_{\Gamma}v$ for each subset $E\subset V\Gamma$. 
We often drop the subscript $\Gamma$ if the graph in question is obvious. 
 
A \emph{path} in $\Gamma$ is a finite sequence of vertices in $\Gamma$ whose adjacent elements are also adjacent in $\Gamma$, and define the length of a path to be the \emph{length} of the sequence minus one.
$\Gamma$ is said to be \emph{connected} if for every $v,w\in V\Gamma$ there is a path whose initial vertex is $v$ and whose final vertex is $w$. 
 
A \emph{circuit} in $\Gamma$ is a path of length at least 3 in which all vertices are distinct except that the initial and final vertices coincide. We write the \emph{girth} of $\Gamma$, the length of the shortest circuit in $\Gamma$, by $\mathrm{girth}\Gamma$. We set $\mathrm{girth}\Gamma = \infty$ if no circuits exist in $\Gamma$. 

A graph $\Gamma$ is called \emph{untransvectable}
if $\mathrm{Lk}_{\Gamma}v \nsubseteq \mathrm{St}_{\Gamma}w $ holds for each pair of vertices $v\neq w\in V\Gamma$. 
It is easily seen that any connected untransvectable graphs with at least 3 vertices have no vertices of degree 0 or 1. 

\end{notation}

Next we define the graph product of groups. 

\begin{definition}
Let $\Gamma = (V\Gamma, E\Gamma)$ be a graph and $(G_v)_{v\in V\Gamma}$ a family of nontrivial groups indexed by $V\Gamma$. Then the \emph{graph product} $G_{\Gamma}=\ast_{v,\Gamma} G_v$ of $(G_v)_{v\in V\Gamma}$ is defined by
\[
G_{\Gamma} := \langle (G_v)_{v\in V\Gamma} \mid [G_v, G_w]= 1 (\forall (v,w)\in E\Gamma) \rangle. 
\] 
\end{definition}

For $E\subset V\Gamma$ we often write by $G_E$ the subgroup of $G_{\Gamma}$ generated by $\{ G_v\mid v\in E\}$. 

\begin{definition}
Let $\Gamma$, $(G_v)_{v\in V\Gamma}$, and $G_{\Gamma}$ be as in the definition above.
\begin{itemize}
\item Let $g_i\in G_{v_i}\backslash  \{e_{G_{v_i}}\}$ for $1\leqq i\leqq n$. A presentation $g=g_1g_2\cdots g_n\in G_{\Gamma}$ is called a \emph{normal form} when $n$ is the smallest among all possible such presentations of $g$. The smallest possible $n$ is denoted $\| g\|$. 
\item A sequence of vertices $v_1, v_2, \ldots, v_n\in V\Gamma$ is said to be \emph{irreducible} when for any pair $(i,j)$ with $1\leqq i<j\leqq n$ and $v_i=v_j$ there exists $k$ with $i<k<j$ such that $(v_i, v_k)\notin E\Gamma$. 
\end{itemize}
\end{definition}

The following theorem of Green \cite{gre} is useful when dealing with presentations of elements in a graph product. 

\begin{theorem}[Green's theorem, see also \textup{\cite[Theorem 2.14]{oya1}}]

Let $\Gamma$, $(G_v)_{v\in V\Gamma}$, and $G_{\Gamma}$ be as in Definition 2.9. Then, $g=g_1g_2\cdots g_n\neq e\in G_{\Gamma}$, a presentation of an element $g$ in $G_{\Gamma}$ with $g_i\in G_{v_i}\backslash \{ e \}$, is a normal form if and only if the sequence $v_1, v_2, \ldots, v_n\in V\Gamma$ is irreducible. Moreover, for any two normal forms of $g$, one can rearrange one of them into the other. 
\end{theorem}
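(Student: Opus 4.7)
The plan is to prove both statements of Green's theorem by induction on $|V\Gamma|$, exploiting a decomposition of $G_{\Gamma}$ as an amalgamated free product and the classical normal form theorem in that setting.

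First I would set up the decomposition. Fix any $v\in V\Gamma$, write $L=\mathrm{Lk}_{\Gamma}v$ and $S=\mathrm{St}_{\Gamma}v=L\cup\{v\}$, and let $\Gamma'$ be the induced subgraph on $V\Gamma\setminus\{v\}$. Since every element of $L$ is adjacent to $v$, the subgroup of $G_{\Gamma}$ generated by $G_v$ and $G_L$ is (by the universal property of the graph product) naturally isomorphic to the direct product $G_v\times G_L=G_S$. Using the universal property on both sides one then verifies
$$G_{\Gamma}\;\cong\;G_{\Gamma'}\,*_{G_L}\,G_S,$$
with the two copies of $G_L$ embedded in the natural way. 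The base case $|V\Gamma|=1$ is trivial.

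Given a presentation $g=g_1g_2\cdots g_n$ with $g_i\in G_{v_i}\setminus\{e\}$, I would group consecutive factors according to whether $v_i=v$ or not, using the commutation relations in $\Gamma$ to move letters of $G_L$ freely across letters of $G_v$ whenever this preserves the associated vertex sequence's validity. After this reshuffling one obtains an alternating word in the two factors $G_{\Gamma'}$ and $G_S$ of the amalgamated free product, whose syllables between amalgamations lie outside $G_L$ precisely when $v_1,\ldots,v_n$ is irreducible. For the forward direction, if the vertex sequence is irreducible but $n$ is not minimal, then by the inductive hypothesis applied to the $G_{\Gamma'}$-syllables (and the trivial structure of $G_S=G_v\times G_L$) the alternating word in $G_{\Gamma'}*_{G_L}G_S$ would already be shortenable, contradicting the amalgamated free product normal form theorem. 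For the converse direction, if the sequence is not irreducible, I would use the witnessing pair $i<j$ with $v_i=v_j$ (and every $v_k$ for $i<k<j$ adjacent to $v_i$) to slide $g_j$ next to $g_i$ via commutation moves and amalgamate, producing a strictly shorter presentation.

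For the second assertion, the amalgamated free product normal form theorem says that the sequence of syllables in $G_{\Gamma'}*_{G_L}G_S$, modulo left-multiplication of each syllable by elements of $G_L$ absorbed into the previous syllable, is uniquely determined by $g$. Combining this with the inductive hypothesis applied to the $G_{\Gamma'}$-syllables (and with the obvious normal form in $G_v\times G_L$), any two normal forms for $g$ in $G_{\Gamma}$ must agree up to exactly the ambiguities produced by (a) commuting letters of $G_L$ across letters of $G_v$, and (b) commuting letters inside a single $G_{\Gamma'}$-syllable as per the inductive hypothesis. Both of these correspond to edge-commutations in $\Gamma$, so any two normal forms differ by the allowed rearrangements.

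The main obstacle I expect is the bookkeeping that matches the combinatorial definition of irreducibility in $\Gamma$ with the alternation and non-triviality conditions in the amalgamated free product normal form, together with the dual bookkeeping showing that the $G_L$-ambiguity at each amalgamation level is fully accounted for by edge-commutations. An alternative line of attack is to build a faithful $G_{\Gamma}$-action on the set of irreducible words by prescribing how each $h\in G_v$ left-multiplies a given word (folding $h$ into the leftmost $G_v$-letter that can be pulled to the front via $\Gamma$-commutations, or prepending otherwise) and checking the graph product relations on this action; this sidesteps the amalgamated free product invocation at the cost of a direct confluence check, and I would keep it in reserve as a fallback.
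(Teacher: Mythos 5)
First, a point of comparison: the paper does not prove this statement at all --- it is quoted as Green's theorem with a citation to the literature, so there is no in-paper proof to measure your attempt against, and your proposal has to stand on its own.

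Your overall route --- induction on $|V\Gamma|$ through the decomposition $G_{\Gamma}\cong G_{\Gamma\setminus\{v\}}\ast_{G_{\mathrm{Lk}v}}G_{\mathrm{St}v}$ with $G_{\mathrm{St}v}\cong G_v\times G_{\mathrm{Lk}v}$ --- is a standard and viable way to prove Green's theorem, the decomposition itself is correct, and your fallback (a faithful action of $G_{\Gamma}$ on the set of irreducible words) is essentially Green's original argument. The converse direction (not irreducible implies not normal) is fine and does not even need the amalgamated product. The problem is that the step you defer as ``bookkeeping'' is where the entire content of the theorem lives, and as written it has a genuine hole: the normal form theorem for $G_{\Gamma'}\ast_{G_{\mathrm{Lk}v}}G_{\mathrm{St}v}$ determines the alternating syllables only up to multiplication by elements of the amalgam $G_{\mathrm{Lk}v}$, and a letter $g_k$ with $v_k\in\mathrm{Lk}v$ has no canonical block (it may be absorbed into an adjacent $G_{\Gamma'}$-syllable or an adjacent $G_{\mathrm{St}v}$-syllable). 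Consequently the graph-product length of an individual syllable is not well defined, the total ``sum of syllable lengths'' is not obviously an invariant of $g$, and your claim that non-minimality of $n$ would make the alternating word ``already shortenable'' does not follow without an argument tracking how $G_{\mathrm{Lk}v}$-letters migrate between blocks. The same issue affects the uniqueness assertion: one must show that every such migration across a block boundary is realized by adjacent commutations of letters, which is precisely the kind of statement being proved. These points are repairable (e.g.\ by a secondary induction on $n$, fixing for each syllable a representative whose $\mathrm{Lk}v$-letters are pushed to a prescribed end), but the proposal as it stands does not contain that argument.
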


In particular, for every $g\in G_{\Gamma}$, the set $\{ v_1, \ldots , v_n\}\subset V\Gamma$ is independent of the choice of the normal form $g=g_1g_2\cdots g_n\in G_{\Gamma}, (g_i\in G_{v_i}, \text{ } v_1, v_2, \ldots, v_n\in V\Gamma)$. We denote this set by $\mathop{supp} g\subset V\Gamma$. 

We will make use of the proof of the following easy lemma later on. 

\begin{lemma}
Let $\Gamma$, $(G_v)_{v\in V\Gamma}$, and $G_{\Gamma}$ be as above. 
For $v\in V\Gamma$, $g\in G_v$, and $h\in G_{\Gamma}$, we have 
\[
\|h \|-1 \leqq \| gh \|\leqq \|h \|+1. 
\]
\end{lemma}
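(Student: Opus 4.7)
The plan is to prove both inequalities simultaneously by producing explicit presentations from a given normal form of $h$, because the case analysis itself is what gets re-used later, not merely the numerical bound.

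First I would fix a normal form $h = h_1 h_2 \cdots h_n$ with $h_i \in G_{v_i}\setminus\{e\}$ and $n = \|h\|$, where by Green's theorem the vertex sequence $(v_1, \dots, v_n)$ is irreducible. The concatenation $g h_1 h_2 \cdots h_n$ is a presentation of $gh$ of length at most $n+1$, so immediately $\|gh\| \le \|h\|+1$. For the lower bound I would apply the same observation to $g^{-1}$ and the element $gh$: since $h = g^{-1}(gh)$, the upper bound yields $\|h\| \le \|gh\| + 1$, i.e.\ $\|gh\| \ge \|h\|-1$.

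To get the refined information that will be useful later, I would then analyze when each of the three possible values $\|h\|-1, \|h\|, \|h\|+1$ is attained. The natural dichotomy is whether one can commute $g$ past some prefix of the normal form to meet a letter in $G_v$. Concretely, let $k$ be the smallest index (if any) with $v_k = v$ and with $(v, v_j) \in E\Gamma$ for all $j < k$. If such a $k$ exists, Green's theorem lets us rewrite $h = h_k\bigl(h_1 \cdots \widehat{h_k} \cdots h_n\bigr)$ as another normal form, so that $gh = (gh_k) h_1 \cdots \widehat{h_k} \cdots h_n$; depending on whether $gh_k = e$ or $gh_k \neq e$ (in which case $gh_k \in G_v \setminus\{e\}$), one checks using Green's theorem again that the resulting word is a normal form of length $n-1$ or $n$ respectively. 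If no such $k$ exists, then the sequence $(v, v_1, \dots, v_n)$ is itself irreducible and $g h_1 \cdots h_n$ is already a normal form, giving $\|gh\| = n+1$.

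I do not expect a genuine obstacle here: everything reduces to bookkeeping with Green's theorem, and the only subtlety is verifying that the rewritten words are still in normal form (i.e.\ that the vertex sequences remain irreducible after the cancellation or the insertion of $v$ at the front). That verification is where the hypothesis on $k$ — in particular the minimality, which ensures no earlier $v_j$ equals $v$ without an obstruction — is used, and it is the piece of the proof that will be referenced later.
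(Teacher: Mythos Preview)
Your proposal is correct and follows essentially the same route as the paper: both arguments use Green's theorem to locate an index $i$ with $v_i=v$ and $v_1,\dots,v_{i-1}\in\mathrm{Lk}_\Gamma v$, and then verify that deleting (or modifying) the $i$-th syllable leaves an irreducible word, yielding exactly the trichotomy $\|gh\|\in\{\|h\|-1,\|h\|,\|h\|+1\}$ that is later invoked in Proposition~4.5. The only cosmetic difference is that you first obtain the numerical lower bound by the symmetry $h=g^{-1}(gh)$ and then redo the case analysis for the refined statement, whereas the paper derives the lower bound directly from that case analysis; also note that the irreducibility check after deletion does not actually need the minimality of $k$ (it follows already from $v_j\in\mathrm{Lk}_\Gamma v$ for $j<k$ together with irreducibility of the original word), so you can drop that remark.
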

\begin{proof}
The second inequality is trivial. 
To prove the first one let $h=h_1h_2\cdots h_n$ be a normal form of $h$. 
If $\|gh \| < \| h \|$, then by Theorem 2.11 there exists an index $i$ such that $v_i=v$, $gh_i=e$ and $v_1, \ldots v_{i-1}\in \mathrm{Lk}_{\Gamma}v$. 
We claim that in this case $gh=h_1h_2\cdots h_{i-1}h_{i+1}\cdots h_n$ is a normal form of $gh$. 
In fact, when $i=1$ or $i=n$ it is trivial. Otherwise, if two syllables in the presentation were to be reducted, then one can without loss of generality assume that the two are $h_{i-1}$ and $h_{i+1}$. Then it follows $v_{i+1}=v_{i-1}\in \mathrm{Lk}_{\Gamma}v$ and therefore $[h_i, h_{i+1}]=e$. But again by Theorem 2.11 this contradicts with
the assumption that $h_1\cdots h_{i-1}h_ih_{i+1}\cdots h_n$ is a normal form. 
\end{proof}

\subsection{Graph products for operator algebras}

Fix a graph $\Gamma$. We consider a family of $C^{\ast}$-algebras with states 
$(A_v, \varphi_v)_{v\in V\Gamma}$ indexed by the set of vertices $V\Gamma$. 

We define the (reduced) graph product of $(A_v, \varphi_v)_v$ in accodance with \cite{cf}. 
Let $(\pi_v, H_v, \xi_v)$ be the GNS representation of $(A_v, \varphi_v)$. 

First let $H$ be the graph product Hilbert space of $(H_v)_v$. i.e.
\[
H = \mathbb{C}\Omega \oplus \bigoplus_{v_1v_2\cdots v_n : \text{irreducible}} H_{v_1}^o \otimes H_{v_2}^o \otimes \cdots \otimes H_{v_n}^o, 
\]
where $H_v^o = H_v\ominus \mathbb{C}\xi_v$. 

Next we define canonical representations $\lambda_v : \mathcal{B}(H_v) \to \mathcal{B}(H)$ for each $v\in V\Gamma$. Let $H(v)$ be the Hilbert space 
\[
H(v)=\mathbb{C}\Omega \oplus \bigoplus_{v_1v_2\cdots v_n, vv_1v_2\cdots v_n : \text{irreducible}} H_{v_1}^o \otimes H_{v_2}^o \otimes \cdots \otimes H_{v_n}^o. 
\]
Then, one can define a unitary map $U_v : H_v\otimes H(v) \to H$ such that
$U_v$ maps
\begin{align}
\xi_v\otimes \Omega &\longmapsto \Omega, \\
\xi\otimes \Omega &\longmapsto \xi, \\
\xi_v\otimes \eta &\longmapsto \eta, \\
\xi\otimes \eta &\longmapsto \xi\otimes \eta
\end{align}
for any $\xi\in H_v\ominus \C \xi_v$ and $\eta\in H(v)\ominus \C \Omega$. 
Let $\lambda_v = \mathrm{Ad}(U_v) \circ (\pi_v\otimes \mathrm{id})$. 
Note that $[\lambda_v(\mathcal{B}(H_v)), \lambda_w(\mathcal{B}(H_w))]=0$ whenever $(v,w)\in E\Gamma$. 

Finally we define the reduced graph product $C^{\ast}$-algebra
$A_{\Gamma}\subset \mathcal{B}(H)$ to be the one generated by
$(\lambda_v(\pi_v(A_v)))_v$. The \Cstaralg $A_{\Gamma}$ has the canonical state $\varphi$, namely the vector state of $\Omega \in H$. For a subset $F\subset V\Gamma$, we denote by $A_F$ the $C^{\ast}$-subalgebra of $A_{\Gamma}$ generated by $\{ A_v\mid v\in F\}$.  

By its definition, the graph product can be seen as a chain of the reduced amalgamated free products of the form
\[
(A_{\Gamma}, E)=(A_{\mathrm{St}v}, E) \ast_{r, A_{\mathrm{Lk}v}} (A_{\Gamma\backslash \{v \}}, E)
\]
where the maps denoted by $E$ are the suitable conditional expectations onto $A_{\mathrm{Lk}v}$. 
In particular, one can see that for every subset $F\subset V\Gamma$, 
there is a conditional expectation $E:A_{\Gamma}\to A_{F}$ which preserves the canonical state $\varphi$. 

For the case of von Neumann algebras, 
let $(B_v, \tau_v)_v$ be a family of tracial von Neumann algebras. 
Then, under the similar notation as above, the graph product (tracial) von Neumann algebra $(B_{\Gamma}\subset \mathcal{B}(H), \tau)$ is defined to be the one generated by $(\lambda_v(B_v))_v$. 

For a subset $F\subset V\Gamma$, we similarly denote by $B_F$ the von Neumann subalgebra of $B_{\Gamma}$ generated by $\{ B_v\mid v\in F\}$. 
The following lemma about the position of these algebras inside the graph product von Neumann algebra is useful. 

\begin{lemma}[\cite{bcc,dv,hi}]
Let $B_{\Gamma}$ be the graph product von Neumann algebra as above, $v\in V\Gamma$ a vertex, and $E,F\subset V\Gamma$ finite subsets. 
If each vertex algebra $B_v$ is diffuse, then the following holds;
\begin{enumerate}
\item $B_E'\cap B_{\Gamma}=\mathcal{Z}(B_E)\overline{\otimes} B_{\mathrm{Lk}E}$. 
\item $B_{\Gamma}$ is a factor if and only if so are $B_v$ for every $v\in V\Gamma$ with $\mathrm{St}_{\Gamma}v=V\Gamma$.   
\item Any right $B_E$-finite $B_E$-$B_E$-subbimodule of $L^2B_{\Gamma}$ is contained in $L^2B_{E\cup \mathrm{Lk}E}$. 
In particular, $\mathcal{QN}_{B_{\Gamma}}(B_E)''=B_{E\cup \mathrm{Lk}E}$.  
\item $B_E\prec B_F$ if and only if $E\subset F$.  
\end{enumerate}
Moreover, the following variants of (3) and (4) hold; 
\begin{enumerate}
\item[(3)'] For any projection $p\in B_E$ and von Neumann subalgebra $P\subset pB_{E}p$ which satisfies
$P\nprec_{B_E} B_{E'}$ for any proper subsets $E'\subsetneq E$, 
if an element $x\in pB_{\Gamma}$ satisfies 
$Px\subset \sum_{i=1}^k x_i B_{E}$ for some $x_1, \ldots , x_k\in B_{\Gamma}$, 
then $x\in B_{E\cup \mathrm{Lk}E}$. 
In particular, $\mathcal{QN}_{pB_{\Gamma}p}(P)''\subset B_{E\cup\mathrm{Lk}E}$. 
\item[(4)'] For any von Neumann subalgebra $P\subset pB_{\Gamma}p$ which satisfies $P\prec^s_{B_{\Gamma}} B_E$ and $P\nprec_{B_{\Gamma}} B_{E'}$ for any proper subset $E'\subsetneq E$, $P\prec_{B_{\Gamma}} B_F$ if and only if $E\subset F$.  
\end{enumerate}
\end{lemma}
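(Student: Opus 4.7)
The plan is to exploit the iterated amalgamated free product decomposition
\[
(B_\Gamma, \tau) = (B_{\mathrm{St}v}, \tau) \ast_{B_{\mathrm{Lk}v}} (B_{\Gamma \setminus \{v\}}, \tau)
\]
available at every vertex $v$, together with the Fourier-type decomposition of $L^2 B_\Gamma$ along irreducible words in $V\Gamma$ arising from the graph product construction. Items (1)--(4) follow the standard playbook used in \cite{bcc, dv, hi}, and I will sketch this first before addressing the new content (3)' and (4)'.

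For (1), I would expand an arbitrary $x \in B_E' \cap B_\Gamma$ along normal forms; commutation with every $B_v$ ($v \in E$), combined with the diffuseness of the vertex algebras, forces every mode to be supported on $E \cup \mathrm{Lk}E$ and restricts each $B_v$-syllable to $\mathcal{Z}(B_v)$. Statement (2) then follows by intersecting with the center. For (3), I would argue that any right $B_E$-finite $B_E$-subbimodule of $L^2 B_\Gamma$ must be supported on normal forms whose syllables outside $E$ lie in $\mathrm{Lk}E$: any such syllable at $v \notin E \cup \mathrm{Lk}E$ would, via a weakly null unitary net in $B_v$ acting on the right, produce infinitely many orthogonal translates, contradicting right-finiteness. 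For (4), the direction $E \subset F \Rightarrow B_E \prec B_F$ is trivial, and the reverse uses Lemma 2.11: if $v \in E \setminus F$, a weakly null net in $\mathcal{U}(B_v)$ has vanishing $E_{B_F}$-image since the normal form of $u_j b$ forces the last syllable outside $F$.

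The main obstacle is the refinement to (3)'. The diffuseness engine of (3) is no longer directly available, since $P$ replaces $B_E$ itself. Its substitute will come from the hypothesis $P \nprec_{B_E} B_{E'}$ for every proper $E' \subsetneq E$: via Lemma 2.4 applied to the finite family $\{B_{E'} : E' \subsetneq E\}$, I would obtain a net $(u_j) \subset \mathcal{U}(P)$ with $\|E_{B_{E'}}(a u_j b)\|_2 \to 0$ for every proper $E' \subsetneq E$ and every $a, b \in B_E$. Then, given $x \in pB_\Gamma$ with $Px \subset \sum_i x_i B_E$, expanding $x$ along normal forms in $V\Gamma \setminus E$ and testing against $u_j$, the $B_E$-finiteness on the right together with the vanishing of $E_{B_{E'}}(u_j \cdot)$ for each $E' \subsetneq E$ rules out any surviving Fourier mode whose support reaches outside $\mathrm{Lk}E$; hence $x \in B_{E \cup \mathrm{Lk}E}$, and the quasi-normalizer statement follows at once.

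Finally, for (4)' the plan is to combine (3)' with Lemmas 2.2 and 2.3. Starting from $P \prec^s_{B_\Gamma} B_E$ and $P \nprec_{B_\Gamma} B_{E'}$ for proper $E' \subsetneq E$, Lemma 2.3 furnishes an intertwiner $v\theta(\cdot) = \cdot\, v$ with $\theta(P) \subset qB_E q$ still avoiding every proper $B_{E'}$. If moreover $P \prec_{B_\Gamma} B_F$, Lemma 2.2 propagates this to $\theta(P) \prec_{B_\Gamma} B_F$; then $v$ lies in the quasi-normalizer of $\theta(P)$ in $B_\Gamma$, which by (3)' sits inside $B_{E \cup \mathrm{Lk}E}$, and a normal-form comparison against $B_F$ forces $E \subset F$. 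The converse direction is immediate from the inclusion $B_E \subset B_F$.
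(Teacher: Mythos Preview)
Your sketches for (1)--(4) and (3)' track the arguments in the cited references and are essentially correct; the paper itself only records citations for this lemma, so there is no independent proof to compare against beyond those sources. The genuine issue is in your argument for (4)'.

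The step ``Lemma 2.2 propagates this to $\theta(P) \prec_{B_\Gamma} B_F$'' is not justified. Lemma 2.2 is a transitivity statement ($P \prec Q$ and $Q \prec^s R$ imply $P \prec R$); it does not transfer an intertwining relation across a spatial embedding. From $v\theta(x)=xv$ you do get $\theta(P) \prec P$ via $v^*$, but you only have $P \prec B_F$, not $P \prec^s B_F$, so Lemma 2.2 does not apply. Concretely, the $P$-$B_F$-bimodule witnessing $P \prec B_F$ may be supported under a projection orthogonal to $vv^*$, in which case pushing it through $v^*$ yields zero. Your next step also fails: the relation $v\theta(x)=xv$ gives $\theta(P)v^* = v^*P$, but $P$ is not contained in $B_E$, so this is not of the shape $\theta(P)x \subset \sum x_i B_E$ required to invoke (3)', and nothing forces $v \in B_{E\cup\mathrm{Lk}E}$ or links that to $E \subset F$.

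The fix, carried out explicitly in the paper's proof of Lemma 5.2 (the equivariant analogue, following \cite[Claim 3.6]{hi}), is to take the intertwiners in the opposite order. First use $P \prec B_F$ to obtain $(\theta, v)$ with $\theta:rPr \to qB_Fq$. Since $vv^* \in (P'\cap pMp)r$, the hypothesis $P \prec^s B_E$ together with Lemma 2.3 lets you choose a second intertwiner $(\varphi, w)$ with $\varphi:sPs \to q'B_Eq'$, $s \leq vv^*$, and $Q := \varphi(sPs) \nprec B_{E'}$ for every proper $E' \subsetneq E$. Now $w^*vv^*w = w^*w \neq 0$ and $v^*w\,\varphi(x) = v^*xw = \theta(x)v^*w$, so $Q \prec_{B_\Gamma} B_F$ with $Q \subset q'B_Eq'$. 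This reduces you to the base case $Q \subset q'B_Eq'$, where the normal-form argument (if $E \nsubseteq F$, a net witnessing $Q \nprec_{B_E} B_{E\cap F}$ also witnesses $Q \nprec_{B_\Gamma} B_F$) completes the proof.
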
 

\begin{proof}
(1): see \cite[Lemma 2.16]{hi}. 

(2),(3),(4): see \cite[Lemma 6.2]{dv}. 

(3)' and (4)' follows from the proof of 
\cite[Proposition 5.8]{bcc} and \cite[Claim 3.6]{hi} respectively. 
\end{proof}

In particular (4)' of this lemma is of frequent use in Section 5. 

\subsection{Graph-wreath products}

Let $H$ be a group, $\Gamma$ a graph, and
$G\curvearrowright \Gamma$ an action of a group $G$ on $\Gamma$.  
Then, there is a Bernoulli-type action 
$\sigma : G\to \mathrm{Aut}(H_{\Gamma})$
given by the following;
\[
\sigma_g : H_v \ni h \longmapsto h \in H_{gv}
\]

Note that this action $\sigma$ extends to a trace-preserving action 
on its group von Neumann algebra $G\curvearrowright L(H_{\Gamma})$. 
This action will be denoted $\sigma$ as well in the sequel. 

From this action one can construct the crossed product group 
$H_{\Gamma}\rtimes_{\sigma} G$. We will simply write this by $H_{\Gamma}\rtimes G$.  

\begin{lemma}
In the above setting, let $M=L(H_{\Gamma}\rtimes G)$ and $B_E=L(H_E)$ for $E\subset V\Gamma$. If all the isotropy groups of the action $G\curvearrowright V\Gamma$
is finite, then the following holds. 
\begin{enumerate}
\item The trace-preserving action 
$\sigma: G\curvearrowright B_{\Gamma}$
is mixing. 

\item The von Neumann algebra $M$ is a factor. 

\item If $|H|\geqq 3$, $M$ is amenable if and only if $G$, $H$ are amenable and $\Gamma$ is complete.

\item Assume moreover that $H$ is infinite. If $E\subset V\Gamma$ is a finite subset, then every right $B_E$-finite $B_E$-$B_E$-subbimodule of $L^2M$ is contained in $L^2(B_{E\cup \mathrm{Lk}E}\rtimes G^E)$. Here we write by $G^E=\{ g\in G \mid gE=E\}$ the isotropy group of $E$.
In particular, $\mathcal{QN}_M(B_E)\subset B_{E\cup \mathrm{Lk}E}\rtimes G^E$. 

\item If $E\subset V\Gamma$ is a finite subset, then $B_E'\cap M =\mathcal{Z}(B_E) \overline{\otimes}(B_{\mathrm{Lk}E}\rtimes G_0^E)$ and therefore $\mathcal{Z}(B_E'\cap M) = \mathcal{Z}(B_E) \overline{\otimes}\mathcal{Z}(B_{\mathrm{Lk}E}\rtimes G_0^E) $, where $G_0^E=\{ g\in G\mid gv=v \text { for every }v\in E\}$. 

\end{enumerate}
\end{lemma}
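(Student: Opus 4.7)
The plan is to prove all five parts by exploiting the $G$-grading $L^2 M = \bigoplus_{g \in G} L^2(B_{\Gamma}) u_g$ and reducing statements about $M = B_{\Gamma} \rtimes G$ to corresponding ones about $B_{\Gamma}$ from Lemma 2.13. For (1), the finiteness of each vertex stabilizer forces the set $\{g \in G : gE \cap E \neq \emptyset\}$ to be finite for every finite $E \subset V\Gamma$. So, for $x \in B_E$ and $y \in B_F$ both of trace zero with $E, F$ finite, only finitely many $g$ satisfy $gE \cap F \neq \emptyset$, and for the rest $\sigma_g(x) \in B_{gE}$ and $y \in B_F$ live in subalgebras indexed by disjoint vertex sets; in the graph-product Hilbert space the trace-zero parts are then orthogonal, so $\tau(\sigma_g(x) y) = 0$. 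Claim (2) then follows by combining (1) with the factoriality of $B_\Gamma$ from Lemma 2.13(2): a Fourier-expansion argument shows that any central $z = \sum_g z_g u_g \in \mathcal{Z}(M)$ has $z_e \in \mathcal{Z}(B_\Gamma)^G = \mathbb{C}$ (by ergodicity from mixing) and $z_g = 0$ for $g \neq e$ (by outerness of $\sigma_g$, which follows from mixing on the II$_1$ factor $B_\Gamma$).

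For (3), if $G, H$ are amenable and $\Gamma$ is complete then $H_{\Gamma} = \prod_v H$ is amenable, so $H_{\Gamma} \rtimes G$ and hence $M$ are amenable. Conversely, amenability of $M$ descends to $LH, LG \subset M$, forcing $H$ and $G$ amenable; moreover, if $\Gamma$ had two non-adjacent vertices, $L(H * H)$ would embed in $M$, but $H * H$ contains $\mathbb{F}_2$ whenever $|H| \geq 3$, contradicting amenability.

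For (4), let $x = \sum_g x_g u_g \in \mathcal{QN}_M(B_E)$. The inclusion $B_E \cdot x \subset \sum_i y_i B_E$ with $y_i = \sum_h y_{i,h} u_h$, upon matching $g$-components, gives $B_E x_g \subset \sum_i y_{i,g} B_{gE}$ in $L^2 B_\Gamma$; thus $B_E x_g$ lies in a finitely generated right $B_{gE}$-module. Popa's intertwining (Theorem 2.1) then forces $B_E \prec_{B_\Gamma} B_{gE}$ whenever $x_g \neq 0$, and Lemma 2.13(4) together with the finiteness of $E$ yields $gE = E$, i.e., $g \in G^E$. For such $g$ one has $\sigma_g(B_E) = B_E$, and the symmetric condition $xB_E \subset \sum B_E z_i$ similarly shows $x_g \in \mathcal{QN}_{B_\Gamma}(B_E)'' = B_{E \cup \mathrm{Lk}E}$ by Lemma 2.13(3). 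Altogether $x \in B_{E \cup \mathrm{Lk}E} \rtimes G^E$.

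For (5), commutation $xb = bx$ with $b \in B_E$ yields $x_g \sigma_g(b) = b x_g$ in $B_\Gamma$ for each $g$. If $g \in G^E \setminus G_0^E$, pick $v \in E$ with $gv \neq v$; testing on $b \in B_v$ exhibits $x_g$ as an intertwiner forcing $B_v \prec_{B_\Gamma} B_{gv}$, which is precluded by Lemma 2.13(4). Hence only $g \in G_0^E$ contributes, and for such $g$, $\sigma_g|_{B_E} = \mathrm{id}$ and $x_g \in B_E' \cap B_\Gamma = \mathcal{Z}(B_E) \overline{\otimes} B_{\mathrm{Lk}E}$ by Lemma 2.13(1). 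Since $G_0^E$ fixes $E$ pointwise and so acts trivially on $\mathcal{Z}(B_E)$, we conclude $B_E' \cap M = \mathcal{Z}(B_E) \overline{\otimes} (B_{\mathrm{Lk}E} \rtimes G_0^E)$; the center formula is then immediate since $\mathcal{Z}(B_E)$ is abelian. The main obstacle will be part (4): matching $g$-components only gives a one-sided ``finitely generated right $B_{gE}$-module'' statement over the asymmetric bimodule $B_E$-$B_{gE}$, and care is needed to feed this into Popa's symmetric intertwining hypothesis so that Lemma 2.13(4) applies cleanly.
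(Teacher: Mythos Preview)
Your approach matches the paper's: Fourier-expand along $G$ and reduce each claim to the corresponding item of Lemma 2.13. Two remarks are worth making.

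First, your worry about (4) is unfounded. Theorem~2.1(2) is already the asymmetric criterion: a nonzero $B_E$-$B_{gE}$-subbimodule of $L^2 B_\Gamma$ that is finitely generated as a right $B_{gE}$-module is exactly what $B_E \prec_{B_\Gamma} B_{gE}$ means, so Lemma 2.13(4) applies directly with no symmetrization. In fact the paper uses only this one-sided inclusion and proves the full bimodule statement (which you bypass in favor of the weaker $\mathcal{QN}$ conclusion): for $\xi$ in a right-$B_E$-finite $B_E$-$B_E$-subbimodule $\mathcal K$ of $L^2M$, choose $\xi_1,\dots,\xi_n\in\mathcal K$ with $B_E\xi\subset\sum_i\xi_iB_E$; comparing $g$-components gives $B_E\xi^g\subset\sum_i\xi_i^g B_{gE}$, forcing $gE=E$ as above, and then $\overline{B_E\xi^g B_E}\subset\sum_i\xi_i^g B_E$ is itself a right-$B_E$-finite $B_E$-$B_E$-bimodule inside $L^2B_\Gamma$, so Lemma 2.13(3) places $\xi^g$ in $L^2B_{E\cup\mathrm{Lk}E}$ without ever invoking the second quasi-normalizer inclusion.

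Second, in (2) you invoke that $B_\Gamma$ is a $\mathrm{II}_1$ factor, but Lemma 2.13(2) does not guarantee this in the stated generality (it fails whenever some $v$ satisfies $\mathrm{St}_\Gamma v=V\Gamma$ and $LH$ has nontrivial center). The paper simply writes ``follows from (1)''; a clean way to fill this in that avoids factoriality of $B_\Gamma$ is to argue on conjugacy classes: for central $z=\sum_g z_g u_g$ one has $\sigma_h(z_g)=z_{hgh^{-1}}$, so if $g\neq e$ has infinite conjugacy class then square-summability forces $z_g=0$, while if the class is finite then $C_G(g)$ is infinite, $z_g$ is $C_G(g)$-fixed, and mixing gives $z_g\in\mathbb C$, whence $z_g\neq 0$ would force $\sigma_g=\mathrm{id}$.
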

\begin{proof}
(1) It suffices to prove that, for every $h, k\in H_{\Gamma}$, $\tau(u_hu_{\sigma_g(k)})=0$ holds for all but finitely many $g\in G$. But this holds unless $\mathop{supp}k\cap g \mathop{supp}k \neq \emptyset$,  
and the number of $g\in G$ satisfying this condition is finite by the assumption on $G\curvearrowright V\Gamma$. 

(2) follows directly from (1). 

(3) The first half of the only if part is trivial since $L(H_{\Gamma}\rtimes G)$ contains $LG$ and $LH$. 
For the latter half just notice that $H\ast H$ is nonamenable. 
The if part is trivial since amenability is preserved under taking direct products and crossed products. 

(4) Let $\mathcal{K}$ be a right $B_E$-finite $B_E$-$B_E$-subbimodule of $L^2M$. Take an element $\xi\in \mathcal{K}$ and $\xi_1,\xi_2, \ldots, \xi_n \in \mathcal{K}$ such that $B_E\xi\subset \sum_i \xi_iB_E$, 
and let $\xi = \sum_{g\in G} \xi^g u_g$ and $\xi_i=\sum_{g\in G} \xi_i^g u_g$ $(\xi^g, \xi_i^g\in B_{\Gamma})$
be their Fourier expansions. Then, by comparing coefficients of $u_g$, one obtains that, for every $g\in G$, 
$B_E\xi^g\subset \sum_i \xi_i^gB_{gE}$. 
Then, by Lemma 2.13 (3) and (4), if $\xi^g\neq 0$, then $E\subset gE$ and $E=gE$, since $E$ is finite, and
$\xi^g\in L^2B_{E\cup \mathrm{Lk}E}$. This verifies the claim. 

(5) By (4), it follows that $B'_E\cap M\subset B_{E\cup \mathrm{Lk}E}\rtimes G^E$. Take $x=\sum_{g\in G^E} x^gu_g\in B_E'\cap M$. Then, since $x$ commutes with $B_E$, $x^g$ satisfies $yx^g=x^g\sigma_g(y)$ for every $y\in B_E$ and $g\in G$. If $x^g\neq 0$, this means that $B_{v}x^g\subset x^g B_{gv}$ holds for every $v\in E$, and therefore $g\in G_0^E$ and $x^g\in B_E'\cap B_{E\cup \mathrm{Lk}E}=\mathcal{Z}(B_E)\overline{\otimes}B_{\mathrm{Lk}E}$ by Lemma 2.13 (1) and (4). 
\end{proof}

\subsection{Property (T)}

Kazhdan's Property (T) plays a significant role in deformation-rigidity theory.  
The facts about Property (T) we will use in this paper are the following theorem
and that the tensor product of two von Neumann algebras with Property (T) again has the property. 

\begin{theorem}[\textup{\cite[Theorem 4.3]{ipp}}, see also \textup{\cite[Theorem 5.6]{hou}}]
Let $M_1, \ldots, M_n$ be tracial von Neumann algebras and $B\subset M_i$ 
a common von Neumann subalgebra of them. 

Let $M=p\Mat_m(M_1\ast_B M_2 \ast_B\cdots \ast_B M_n)p$ be 
an amplification of the amalgamated free product. 
If a subfactor $Q\subset M$ has Property (T), 
then there is an index $i\in \{ 1,2,\ldots, n\}$ such that $Q\prec_M M_i$. 
\end{theorem}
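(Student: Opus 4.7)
The approach is the Ioana--Peterson--Popa malleable-deformation argument for amalgamated free products, combined with Popa's intertwining-by-bimodules package. First, by standard compression and amplification maneuvers (both Property (T) and the relation $\prec$ are compatible with corner-cutting by a projection and with tensoring by $\Mat_m(\mathbb{C})$, using the canonical identification $\Mat_m(M_1\ast_B\cdots\ast_B M_n) = \Mat_m(M_1)\ast_{\Mat_m(B)}\cdots\ast_{\Mat_m(B)}\Mat_m(M_n)$), I reduce to the case $m=1$, $p=1_M$, so that $N:=M_1\ast_B\cdots\ast_B M_n$ and $Q\subset N$ is a Property (T) subfactor.

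Second, I build IPP's malleable dilation. Set $\widetilde{M}_i = M_i\ast_B(B\bartensor L\mathbb{Z})$ for each $i$ and form $\widetilde{N}=\widetilde{M}_1\ast_B\cdots\ast_B\widetilde{M}_n$, which contains $N$ as a trace-preserving subfactor with conditional expectation $E_N$. Using a Haar unitary $u_i$ in each added copy of $L\mathbb{Z}$ and a functional-calculus choice of real powers $u_i^t$, one obtains a trace-preserving one-parameter group $(\alpha_t)_{t\in\mathbb{R}}$ of $\ast$-automorphisms of $\widetilde{N}$ with $\alpha_0=\mathrm{id}$, fixing $B$ pointwise, pointwise $\|\cdot\|_2$-continuous at $t=0$, and satisfying the transversality inequality
\[
\|\alpha_{2t}(x)-E_N\alpha_{2t}(x)\|_2 \;\geqq\; \|\alpha_t(x)-x\|_2
\qquad (x\in N).
\]

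Third, Property (T) of $Q$ upgrades the pointwise convergence $\alpha_t|_Q\to\mathrm{id}_Q$ to uniform convergence on the unit ball of $Q$: there is some $t_0>0$ with $\sup_{u\in\mathcal{U}(Q)}\|\alpha_{t_0}(u)-u\|_2 < \tfrac{1}{2}$. Applying transversality at $t=t_0/2$ gives a uniform lower bound on $\|E_N\alpha_{t_0/2}(u)\|_2$ over $u\in\mathcal{U}(Q)$, so by Popa's intertwining criterion $\alpha_{t_0/2}(Q)\prec_{\widetilde{N}} N$; equivalently, there is a finitely generated $\alpha_{t_0/2}(Q)$-$N$-subbimodule of $L^2\widetilde{N}$.

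Fourth, and this is the main technical step, I descend from intertwining into $N$ inside the dilation $\widetilde{N}$ to intertwining into some single $M_i$ inside $N$. Decompose $L^2\widetilde{N}\ominus L^2 N$ into its reduced-word components over $N$ in the amalgamated free product $\widetilde{N}=\widetilde{M}_1\ast_B\cdots\ast_B\widetilde{M}_n$. A word-length reduction on such a bimodule (pushing words of length $\geqq 2$ off to zero using the free structure of the added $L\mathbb{Z}$-factors) forces the bimodule to be supported on finitely many short words, and further analysis shows that any such bimodule provides a non-negligible averaging of $Q$ into one of the $M_i$. Pulling back through $\alpha_{-t_0/2}$ then gives $Q\prec_N M_i$.

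The hardest part is the final reduced-word analysis: the presence of the nontrivial amalgam $B$ (rather than $\mathbb{C}$) means the orthogonal decomposition of $L^2\widetilde{N}$ over $N$ is controlled only modulo $B$-bimodular combinatorics, and the careful bookkeeping required to extract an intertwiner into a \emph{single} $M_i$ from a bimodule concentrated on short words is the technical heart of \cite{ipp}.
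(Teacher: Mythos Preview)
The paper does not supply its own proof of this statement; it is quoted as \cite[Theorem 4.3]{ipp} (with the reference to Houdayer's exposition) and used as a black box in Section~5. So there is nothing in the paper to compare your argument against.

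That said, your sketch is a faithful outline of the IPP malleable-deformation strategy. Two small remarks on the details. First, the transversality inequality is unnecessary at the point where you invoke it: since $u\in N$ already, $E_N u=u$ and hence $\|(1-E_N)\alpha_t(u)\|_2=\|(1-E_N)(\alpha_t(u)-u)\|_2\leqq\|\alpha_t(u)-u\|_2$, so the uniform bound $\|\alpha_t(u)-u\|_2<\tfrac12$ immediately gives $\|E_N\alpha_t(u)\|_2^2>\tfrac34$ and hence $\alpha_t(Q)\prec_{\widetilde N}N$. (Transversality is what one needs when only spectral gap, not full Property~(T), is available.) Second, the descent step in IPP is organised slightly differently from what you describe: rather than extracting an intertwiner into a single $M_i$ from a short-word bimodule, one assumes $Q\nprec_N M_i$ for every $i$, takes the nonzero partial isometry $v\in\widetilde N$ with $\alpha_t(x)v=vx$ for $x\in Q$, and shows by a reduced-word computation and averaging over $\mathcal U(Q)$ that $v$ must vanish --- a contradiction. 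Your identification of this word-combinatorics over the amalgam $B$ as the technical heart is correct.
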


\section{Bi-exactness for graph product von Neumann algebras}

In this section, we provide the proof of Theorem A. 
The proof makes use of the preceding results concerning 
nuclearity and extension of u.c.p.\@ maps for 
reduced amalgamated free product \Cstaralgs
\cite{oza2, has}.

\begin{proposition}
For any graph $\Gamma$, positive integers $(n_v)_{v\in V\Gamma}$ and pure states $(\phi_v)_{v\in V\Gamma}$ on the matrix algebras $(\Mat_{n_v}(\mathbb{C}))_{v\in V\Gamma}$, the reduced graph product \Cstaralg $M_\Gamma$ of $(\Mat_{n_v}(\mathbb{C}), \phi_v)_{v\in V\Gamma}$ is nuclear. 

\end{proposition}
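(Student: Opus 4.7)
The plan is to induct on $|V\Gamma|$ when $\Gamma$ is finite, and deduce the infinite case by writing $M_\Gamma$ as the $C^\ast$-inductive limit $\varinjlim_F M_F$ over finite subsets $F\subset V\Gamma$, since an inductive limit of nuclear $C^\ast$-algebras is again nuclear. So assume $\Gamma$ is finite.

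The base case $|V\Gamma|=1$ is immediate: $M_\Gamma=\Mat_{n_v}(\C)$ is nuclear. For the inductive step, fix $v\in V\Gamma$. If $\mathrm{St}_\Gamma v=V\Gamma$, i.e.\@ $v$ is adjacent to every other vertex, then
\[
M_\Gamma \cong \Mat_{n_v}(\C)\otimes M_{\mathrm{Lk}_\Gamma v},
\]
which is nuclear by the induction hypothesis applied to the strictly smaller graph $\mathrm{Lk}_\Gamma v$. Otherwise, both $\mathrm{St}_\Gamma v$ and $\Gamma\setminus\{v\}$ are proper subgraphs of $\Gamma$, and, as noted in the discussion preceding Lemma 2.13, the graph product splits as a reduced amalgamated free product
\[
M_\Gamma = M_{\mathrm{St}_\Gamma v}\,\ast_{M_{\mathrm{Lk}_\Gamma v}}\, M_{\Gamma\setminus\{v\}}
\]
with respect to the canonical conditional expectations. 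By the induction hypothesis, both factors $M_{\mathrm{St}_\Gamma v}\cong \Mat_{n_v}(\C)\otimes M_{\mathrm{Lk}_\Gamma v}$ and $M_{\Gamma\setminus\{v\}}$, as well as the amalgam $M_{\mathrm{Lk}_\Gamma v}$, are nuclear. I would then invoke the sufficient condition for nuclearity of reduced amalgamated free products established in \cite{has} to conclude.

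The main obstacle will be verifying Hasegawa's hypothesis in the amalgamated free product step. The purity of $\phi_v$ is the key input: the GNS Hilbert right $M_{\mathrm{Lk}_\Gamma v}$-module of the conditional expectation $\phi_v\otimes\mathrm{id}:\Mat_{n_v}(\C)\otimes M_{\mathrm{Lk}_\Gamma v}\to M_{\mathrm{Lk}_\Gamma v}$ is the free module of finite rank $n_v$, which supplies exactly the finite-index-type structure required by the criterion of \cite{has}; a non-pure state would in general produce an infinite-rank module for which the criterion would not be directly available. The GNS module of the other conditional expectation $M_{\Gamma\setminus\{v\}}\to M_{\mathrm{Lk}_\Gamma v}$ has a Fock-type decomposition indexed by irreducible sequences of vertices starting outside $\mathrm{Lk}_\Gamma v$, inherited from the graph-product construction of Section 2.4; one expects to verify Hasegawa's hypothesis for it either by iterating the amalgamated free product decomposition or by directly exploiting this explicit description of the bimodule.
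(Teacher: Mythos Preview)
Your approach is essentially identical to the paper's: reduce to finite $\Gamma$, induct on $|V\Gamma|$ with the same case split, and invoke Hasegawa's criterion \cite[Theorem~5.2]{has} for the amalgamated free product step. The verification of Hasegawa's hypotheses is simpler than you anticipate: on the $M_{\mathrm{St}v}$ side, purity of $\phi_v$ ensures the GNS representation of $E_1=\phi_v\otimes\mathrm{id}$ contains the Jones projection (equivalent to your finite-rank free module observation), while on the $M_{\Gamma\setminus\{v\}}$ side the only requirement is that the GNS representation of $E_2$ be nondegenerate, which is automatic since the algebras are unital---no Fock-module analysis or further iteration is needed there.
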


\begin{proof}
Since nuclearity is preserved under limits, we may assume that $\Gamma$ is finite. We proceed by induction for $|V\Gamma|$. 
The base case is trivial. Assume $|V\Gamma|\geqq 2$ and take a vertex $v\in V\Gamma$ arbitrarily. 

If $v$ is adjacent to every other vertex of $V\Gamma$, 
then $M_{\Gamma}=\Mat_{n_v}(\mathbb{C})\otimes M_{\Gamma\backslash \{v \}}$ 
and we are done by the induction hypothesis. 
If not, we have the reduced amalgamated free product decomposition 
\[
(M_{\Gamma}, E_{\mathrm{Lk}v})=(M_{\mathrm{St}v}, E_1) \ast_{r, M_{\mathrm{Lk}v}} (M_{\Gamma\backslash \{v \}}, E_2). 
\] 
Here $E_1$ and $E_2$ denote the restriction of the conditional expectation 
$E_{\mathrm{Lk}v} : M_{\Gamma} \to M_{\mathrm{Lk}v}$ to each algebra. 

By \cite[Theorem 5.2]{has} and the induction hypothesis, 
it suffices to show that the GNS representation of $E_1$ contains the Jones projection and that of $E_2$ is nondegenerate. 
But the former follows from the fact that $\phi_v$ is a pure state and $(M_{\mathrm{St}v}, E_1) = (\Mat_{n_v}(\mathbb{C})\otimes M_{\mathrm{Lk}v}, \phi_v\otimes \mathrm{id})$, and the latter is trivial since $M_{\Gamma}$ is unital. 
\end{proof}

\begin{proof}[\textbf{Proof of Theorem A}]
We prove the claim first for the case when $H_v$ are bi-exact for all $v\in V\Gamma$. Let $\mathcal{G}=\{ H_{\mathrm{Lk}v}\mid v\in V\Gamma\}$. 

For each $v\in V\Gamma$, let $A_v\subset \mathcal{B}(\ell^2 H_v)$ be the $C^{\ast}$-algebra generated by $C^{\ast}_{\lambda}(H_v)$ and $S(H_v)\subset \ell^{\infty}(H_v)$. Then by Proposition 2.6 and \cite[Proposition 5.1.3]{bo}, $A_v\cong S(H_v)\rtimes_r H_v$ is nuclear. Let $A\subset B(\ell^2 H_{\Gamma})$ be the $C^{\ast}$-algebra generated by $(A_v)_{v\in V\Gamma}$. 
We prove the following two claims in accordance with \cite[Lemma 2.4]{oza2}. 

\begin{claim}
The $C^{\ast}$-algebra $A$ is nuclear. 
\end{claim}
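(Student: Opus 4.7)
The plan is to adapt the inductive argument of Proposition 3.1, replacing the matrix algebras with the nuclear $C^*$-algebras $A_v \cong S(H_v) \rtimes_r H_v$. Since nuclearity is preserved under inductive limits and $A$ is the norm closure of the increasing union of the $C^*$-algebras generated by $(A_v)_{v \in F}$ as $F$ ranges over finite subsets of $V\Gamma$, we may assume $\Gamma$ is finite and induct on $|V\Gamma|$. The base case $|V\Gamma|=1$ is the nuclearity of $A_v$ noted just above.

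For the inductive step, pick $v \in V\Gamma$. If $v$ is adjacent to every other vertex of $V\Gamma$, then the commutation relations imposed by the edges give $A \cong A_v \mintensor A_{V\Gamma \setminus \{v\}}$, which is nuclear by the induction hypothesis. Otherwise, the graph-product structure yields the reduced amalgamated free product decomposition
\[
(A, E_{\mathrm{Lk}v}) = (A_{\mathrm{St}v}, E_1) \ast_{r,\, A_{\mathrm{Lk}v}} (A_{V\Gamma \setminus \{v\}}, E_2),
\]
where the conditional expectations are all induced by the vector state at $\delta_e \in \ell^2 H_\Gamma$. Observe that $A_{\mathrm{St}v} = A_v \mintensor A_{\mathrm{Lk}v}$ is nuclear, that $A_{V\Gamma \setminus \{v\}}$ is nuclear by induction, and that the GNS representation of $E_2$ is nondegenerate because $A_{V\Gamma \setminus \{v\}}$ is unital. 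The plan is therefore to invoke Hasegawa's nuclearity criterion \cite[Theorem 5.2]{has}, exactly as in Proposition 3.1.

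The crux is thus the Jones-projection condition for $E_1$. The key observation is that the vector state $\phi_v(a) = \langle a \delta_{e_v}, \delta_{e_v} \rangle$ on $A_v$ is implemented by a projection sitting inside $A_v$ itself: namely, $p_v := \chi_{\{e_v\}} \in c_0(H_v) \subset S(H_v) \subset A_v$ is the rank-one projection onto $\C \delta_{e_v} \subset \ell^2 H_v$, and one checks directly that $p_v a p_v = \phi_v(a) p_v$ for every $a \in A_v$. Under the identification $A_{\mathrm{St}v} = A_v \mintensor A_{\mathrm{Lk}v}$ with $E_1 = \phi_v \otimes \mathrm{id}$, the element $p_v \otimes 1$ then furnishes the desired Jones projection in the GNS of $E_1$. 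Hasegawa's theorem then yields nuclearity of $A$ and closes the induction.

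The only real obstacle beyond the template of Proposition 3.1 is producing the Jones projection for the states $\phi_v$ on the non-matricial $A_v$; but this is free from the definition, since $S(H_v)$ contains $c_0(H_v)$ and in particular the rank-one projection $\chi_{\{e_v\}}$. This observation is precisely the substitute for the purity of the pure states $\phi_v$ on the matrix algebras used in Proposition 3.1.
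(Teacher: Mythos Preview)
Your proof is correct, but it follows a genuinely different route from the paper's.

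The paper proves Claim 3.2 by an approximation argument: it uses nuclearity of each $A_v$ to get u.c.p.\@ factorizations through matrix algebras $\Mat_{n_v^i}(\C)$, invokes \cite[Lemma 4.1]{dp} to make these compatible with the vector states, and then applies \cite[Corollary 4.1]{atk} to take graph products of these u.c.p.\@ maps. This yields approximate factorizations of $A$ through the graph products $B^i$ of the matrix algebras, and those $B^i$ are nuclear by Proposition 3.1. In short, the paper reduces Claim 3.2 to Proposition 3.1 via an extrinsic approximation.

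Your argument instead runs the Hasegawa induction of Proposition 3.1 directly on the $A_v$'s themselves. The only place the paper's proof of Proposition 3.1 uses anything specific to matrix algebras is the Jones-projection condition for $E_1$, extracted from purity of $\phi_v$. You correctly observe that this condition holds for $(A_v,\phi_v)$ for a different reason: the rank-one projection $\chi_{\{e_v\}}$ onto $\C\delta_{e_v}$ already lies in $c_0(H_v)\subset S(H_v)\subset A_v$, so $p_v\otimes 1\in A_v\mintensor A_{\mathrm{Lk}v}=A_{\mathrm{St}v}$ furnishes the Jones projection for $E_1=\phi_v\otimes\mathrm{id}$. This is a clean substitute and the rest of the induction goes through verbatim.

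What your approach buys is economy: it bypasses the auxiliary references \cite{dp} and \cite{atk} and renders Proposition 3.1 a special case rather than a stepping stone. The paper's route has the mild advantage of isolating the matrix-algebra case as a standalone statement and of showing how state-compatible u.c.p.\@ maps pass to graph products, which is a fact of some independent interest.
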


\begin{proof}[\textbf{Proof of Claim 3.2}]
Since each $A_v$ is nuclear, there exists a family of nets of u.c.p.\@ maps $(\varphi_v^i:A_v\to \Mat_{n_v^i}(\mathbb{C}) )_i, (\psi_v^i: \Mat_{n_v^i}(\mathbb{C})\to A_v)_i$ 
such that $\|\psi_v^i\circ \varphi_v^i(x) - x\|\to 0$ holds for every $x\in A_v$.  

Moreover, by \cite[Lemma 4.1]{dp}, one can assume that there exist a net of subspaces $\delta_{e_v}\in K_v^i\subset \ell^2 H_v$ and a net of u.c.p.\@ maps $(\phi_v^i:\mathcal{B}(K_v^i)\to \Mat_{n_v^i}(\mathbb{C}) )_i$ such that $\|\psi_v^i\circ \phi_v^i\circ \mathrm{Ad}(P_{K_v^i})(x) - x\|\to 0$ holds for every $x\in A_v$. 

Since $P_v:=P_{\mathbb{C}\delta_{e_v}}\in \mathcal{K}(\ell^2 H_v) \subset A_v$, by perturbing $\psi_v^i\circ\phi_v^i$ we may assume that $\psi_v^i\circ\phi_v^i(P_v)=P_v$. 
We denote the vector state of $\delta_{e_v}$ by $\mu_v$ and the reduced graph product of $(\mathcal{B}(K_v^i), \mu_v)_{v\in V\Gamma}$ by $B^i$. Note that by Proposition 3.1 $B^i$ is nuclear. 

Then, by \cite[Corollary 4.1]{atk}, there exist the nets of u.c.p.\@ maps
$(\phi^i:A\to B^i )_i, (\psi^i: B^i \to A)_i$, which are the graph products of $(\phi_v^i)_v, (\psi_v^i)_v$ respectively, such that $\psi^i\circ \phi^i(x)\to x$ and the conclusion follows. 
\end{proof}

\begin{claim}
For every $a,b\in A$, we have $[a, JbJ]\in \mathcal{K}(\mathcal{G})$. 
\end{claim}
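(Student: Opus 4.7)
The plan is to verify the commutator condition on elementary generators and extend by norm-continuity together with the Leibniz identity $[xy, z] = x[y,z] + [x,z]y$. This reduces the claim to the case $a \in A_v$, $b \in A_w$, each being either a group element $\lambda_h$ (with $h \in H_v$ or $H_w$, embedded via $\lambda_v$ or $\lambda_w$) or a function $f \in S(H_v)$ (resp.\ $S(H_w)$) acting as a multiplication operator through $\lambda_v$ (resp.\ $\lambda_w$). This produces four sub-cases.

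Two of them are immediate: if $a$ and $b$ are both group elements, then $a = \lambda_g^{H_\Gamma}$ and $JbJ = \rho_h^{H_\Gamma}$ commute automatically; if $a$ and $b$ are both multiplication operators, they commute since multiplication is abelian. The substantive cases are the mixed ones, and by symmetry I focus on $a = \lambda_g$ with $g \in H_v$ and $b = \lambda_w(f)$ with $f \in S(H_w)$. Using Green's normal form theorem I describe $\lambda_w(f)$ as the multiplication operator $\delta_x \mapsto f(L_w(x))\delta_x$, where $L_w(x) \in H_w$ is the maximal $H_w$-prefix commutable to the leftmost position of the normal form of $x$. Then $J\lambda_w(f)J$ is multiplication by $x \mapsto \overline{f(L_w(x^{-1}))}$, and a direct calculation yields
\[
[\lambda_g, J\lambda_w(f)J]\delta_x = \bigl(\overline{f(L_w(x^{-1}))} - \overline{f(L_w(x^{-1} g^{-1}))}\bigr)\delta_{gx}.
\]

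I then split on whether $v = w$. If $v \neq w$, appending $g^{-1} \in H_v$ to the right end of $x^{-1}$ cannot alter the leftmost $H_w$-component, so $L_w(x^{-1} g^{-1}) = L_w(x^{-1})$ and the commutator vanishes identically. If $v = w$, writing $x^{-1} = L_v(x^{-1})\cdot y$ with $y$ admitting no $H_v$-letter commutable to the left, the coefficient is nonzero exactly when $y \in H_{\mathrm{Lk}v}$ (so that $g^{-1}$ commutes past $y$ and merges with $L_v(x^{-1})$). This restricts the support to $x \in H_v \cdot H_{\mathrm{Lk}v} = H_{\mathrm{St}v}$; on this set, writing $x = h k$ with $h \in H_v$ and $k \in H_{\mathrm{Lk}v}$, the coefficient reduces to $\overline{f(h^{-1})} - \overline{f(h^{-1} g^{-1})}$, which depends only on $h$ and tends to $0$ as $h \to \infty$ in $H_v$ by the defining property of $S(H_v)$. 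Hence for every $\varepsilon > 0$ the super-level set is a finite union of left translates of $H_{\mathrm{Lk}v} \in \mathcal{G}$; the coefficient function lies in $c_0(H_\Gamma, \mathcal{G})$, and since $\lambda_g \in C^{\ast}_{\lambda}(H_\Gamma) \subset \mathcal{M}(\mathcal{K}(\mathcal{G}))$, the commutator lies in $\mathcal{K}(\mathcal{G})$. The reverse mixed case is handled identically by taking adjoints.

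The main obstacle I anticipate is the geometric bookkeeping in the $v=w$ sub-case: one must pin down precisely when right multiplication by a single $H_v$-letter alters the leftmost $H_v$-component of $x^{-1}$, using Green's normal form, and identify the resulting ``bad'' support as $H_{\mathrm{St}v}$. Once this reduction is achieved, the $c_0(H_v)$-decay built into $S(H_v)$ transfers cleanly to a $c_0(H_\Gamma, \mathcal{G})$-estimate, and the sandwiched-ideal structure of $\mathcal{K}(\mathcal{G})$ concludes the argument.
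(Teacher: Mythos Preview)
Your argument is correct and takes essentially the same route as the paper: reduce via Leibniz and continuity to a single mixed generator pair, observe the commutator vanishes unless $v=w$, and in that case identify its support as $H_{\mathrm{St}v}=H_v\cdot H_{\mathrm{Lk}v}$ with the residual coefficient controlled by the defining decay of $S(H_v)$. The paper carries this out tersely in the Fock-space tensor picture (expressing $[f,Ju_hJ]$ as a compact operator on $\ell^2 H_v$ tensored with the projection onto $\ell^2 H_{\mathrm{Lk}v}$) and treats the case $a=f$, $b=u_h$, whereas you compute explicitly in the basis $\{\delta_x\}$ via the leftmost-letter function $L_w$ and handle the opposite mixed case; these are the same calculation in different coordinates (one small slip: the reverse mixed case follows from $J[a,JbJ]J=-[b,JaJ]$ together with $J\mathcal{K}(\mathcal{G})J=\mathcal{K}(\mathcal{G})$, not from taking adjoints).
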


\begin{proof}[\textbf{Proof of Claim 3.3}]
It suffices to prove for the case when $a=f, b=u_h$
for some $v,w\in V\Gamma$ and $f\in S(H_v), h\in H_w$. 
Take an element $\xi = \xi_1\otimes\xi_2\otimes \cdots\otimes \xi_n$ from $\ell^2(H_{v_1})\otimes \ell^2(H_{v_2})^o \otimes \cdots\otimes \ell^2(H_{v_n})$ for a fixed irreducible word $v_1v_2\ldots v_n$ on $\Gamma$. (see Section 2.4). 

Then by simple calculation we get
\[
[f,Ju_hJ](\xi)= 
\begin{cases}
([f,Ju_hJ](\xi_1))\otimes \xi_2\otimes\cdots\otimes\xi_n, 
& v=w=v_1 \text{ and } v_2, \ldots, v_n\in \mathrm{Lk}(v), \\
0, & \text{otherwise},
\end{cases}
\]
and the claim follows since $[f,Ju_hJ]\in \mathcal{K}(\ell^2 H_v)$. 
\end{proof}

By Claim 3.2 and 3.3, the map
\begin{align}
\theta : A\algtensor JAJ &\to \mathcal{M}(\mathcal{K}(\mathcal{G}))/\mathcal{K}(\mathcal{G}) \\
x\otimes JyJ &\mapsto [xJyJ]
\end{align}
is well-defined and uniquely extended to the u.c.p.\@ map $\Theta$ on $A\mintensor JAJ$. And again by Claim 3.2, $\Theta$ is a nuclear and liftable u.c.p. map. By restricting $\Theta$ to $C^{\ast}_{\lambda}(H_{\Gamma})\mintensor C^{\ast}_{\rho}(H_{\Gamma})$ we get the map in Lemma 2.6 and therefore $H_{\Gamma}$ is bi-exact relative to $\mathcal{G}$ by the lemma. 

To prove for the case when $H_v$ are exact for all $v\in V\Gamma$, replace $S(H_v)$ with $\ell^{\infty}(H_v)$ in the proof above. 
Then, since $H_v$ are exact, the \Cstaralgs $A_v\cong \ell^{\infty}(H_v)\rtimes_r H_v$ are nuclear and it follows that $A=C^{\ast}\{A_v\mid v\in V\Gamma\}$ is nuclear as in Claim 3.2. Also, for Claim 3.3, 
by the same calculation as above, it follows that $[a, JbJ]\in \mathcal{K}(\{ H_{\mathrm{St}v}\}_{v\in V\Gamma})$
for every $a,b\in A$. 
Then the conclusion follows similarly. 
\end{proof}

\section{Bi-exactness for graph-wreath products}

In this section, we will prove Theorem B for the case when $H$ is bi-exact. 
To obtain the proof for the case when $H$ is a general exact group, 
replace $S(H_v)$ with $\ell^{\infty}H_v$ and
$H_{\mathrm{Lk}v}$ with $H_{\mathrm{St}v}$ in the sequel (in particular in the proof of Proposition 4.6), 
as in the proof of Theorem A above. 

Let $H$ be a bi-exact group, $\Gamma$ be a graph, and $G\curvearrowright \Gamma$ be an action of a group $G$ on $\Gamma$. 
We denote by $H_{\Gamma}$ the graph product on $\Gamma$ with each vertex group $H_v$ being $H$ and by $H_{\Gamma}\rtimes G$ the graph-wreath product. 

\begin{definition}
For $n\in \mathbb{N}$ and finite subsets $E\subset H$, $F\subset V\Gamma$, define subsets $A(E,F,n)\subset H_{\Gamma}\rtimes G$ by
\[
A(E,F,n)=\{ (h=h_1h_2\cdots h_m, g) \mid m\leqq n, h_i\in E, v_i\in F\cup gF\}
\]
where $h_1h_2\cdots h_m$ is one of the normal forms of $h\in H_{\Gamma}$ and $h_i\in H_{v_i}$ for each $i$.
(Note that this definition is well-defined.)
\end{definition}

For the proof of Theorem B, we adopt the method of \cite[Theorem 5.2]{hik}.  In fact we prove the following generalization of Theorem B. 
We set $\mathcal{G}$ to be $\{ A(E, F, n) \mid E\subset V\Gamma, F\subset H, n\} \cup\{ H_{\mathrm{Lk}v}\mid v\in V\Gamma\}$, a family of subsets of $H_{\Gamma}\rtimes G$. 

\begin{theorem}
In the above setting, assume moreover that $G$ is bi-exact relative to some family of subsets $\mathcal{S}$, all the isotropy groups of the action $G\action V\Gamma$ are finite, and that $G\action V\Gamma$ has finitely many orbits. Then, $H_{\Gamma}\rtimes G$ is bi-exact relative to $\mathcal{F}\wedge\mathcal{G}$, where $\mathcal{F}=\{ H_{\Gamma}\cdot S\mid S\in \mathcal{S}\}$. 
\end{theorem}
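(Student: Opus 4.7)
The plan is to adapt the $C^{\ast}$-algebraic strategy of Theorem A to the crossed-product setting, paralleling the approach of \cite{hik} for ordinary wreath products. The goal is to construct a $C^{\ast}$-algebra $A\subset B(\ell^2(H_\Gamma\rtimes G))$ containing $C^{\ast}_\lambda(H_\Gamma\rtimes G)$ that is nuclear and whose elements satisfy the commutator estimate $[a,JbJ]\in \mathcal{K}(\mathcal{F}\wedge\mathcal{G})$ for all $a,b\in A$. Proposition 2.6 will then yield the conclusion.

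Identify $\ell^2(H_\Gamma\rtimes G)\cong \ell^2(H_\Gamma)\otimes \ell^2(G)$. On the $H_\Gamma$-side, for each $v\in V\Gamma$ let $A_v\subset B(\ell^2 H_v)$ be the nuclear $C^{\ast}$-algebra generated by $C^{\ast}_\lambda(H_v)$ and $S(H_v)$ as in the proof of Theorem A, and let $A_\Gamma\subset B(\ell^2 H_\Gamma)$ be its reduced graph product, nuclear by Claim 3.2. On the $G$-side, let $A_G\subset B(\ell^2 G)$ be the $C^{\ast}$-algebra generated by $C^{\ast}_\lambda G$ and $S_{\mathcal{S}}(G)$, nuclear by bi-exactness of $G$ relative to $\mathcal{S}$ via Proposition 2.6. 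The Bernoulli-type action of $G$ on $H_\Gamma$ lifts to automorphisms of $A_\Gamma$ by permutation of vertex factors, so $A_\Gamma\rtimes_{r,\sigma}G$ embeds in $B(\ell^2(H_\Gamma)\otimes \ell^2(G))$; the amenable $G$-action on $S_{\mathcal{S}}(G)$ coming from Proposition 2.6, combined with the nuclearity of $A_\Gamma$, yields nuclearity of this crossed product, and hence of the $C^{\ast}$-algebra $A$ generated by $A_\Gamma\rtimes_r G$ together with $1\otimes A_G$. By construction, $A\supset C^{\ast}_\lambda(H_\Gamma\rtimes G)$.

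The commutator estimate is verified by splitting $A$ into its two boundary components. For $f\in S_{\mathcal{S}}(G)$ viewed in $1\otimes A_G$, commutators with $Ju_{(h,g)}J$ are controlled purely on the $G$-factor, and bi-exactness of $G$ relative to $\mathcal{S}$ yields $[f,Ju_{(h,g)}J]\in \mathcal{K}(\mathcal{F})$. For $\phi_v\in S(H_v)$ embedded in $A_\Gamma\otimes 1$, the argument of Claim 3.3 localizes the commutator on those words whose irreducible normal form starts with a letter in $H_v$ and continues only in $\mathrm{Lk}v$-vertices; the new twist here is to track what happens when we apply the shift unitaries $u_g$ of $G$, which send $\phi_v$ to $\phi_{gv}$. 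Using Lemma 2.12 and Green's theorem to control normal forms, one shows that these commutators are supported on elements $(h,g')$ whose $H_\Gamma$-part has a normal form with at most $n$ syllables from a finite set $E\subset H$ and with support inside $F\cup g'F$ for a finite $F\subset V\Gamma$. This is exactly the shape of $A(E,F,n)$, so $[\phi_v,Ju_{(h,g)}J]\in\mathcal{K}(\mathcal{G})$. Intersecting both containments gives $[a,JbJ]\in\mathcal{K}(\mathcal{F})\cap\mathcal{K}(\mathcal{G})=\mathcal{K}(\mathcal{F}\wedge\mathcal{G})$.

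Once the commutator estimate is established, nuclearity of $A$ lets the well-defined map $A\algtensor JAJ\to \mathcal{M}(\mathcal{K}(\mathcal{F}\wedge\mathcal{G}))/\mathcal{K}(\mathcal{F}\wedge\mathcal{G})$ extend to $A\mintensor JAJ$ and lift to a u.c.p.\@ map, whose restriction to $C^{\ast}_\lambda(H_\Gamma\rtimes G)\mintensor C^{\ast}_\rho(H_\Gamma\rtimes G)$ provides exactly the map demanded by Proposition 2.6. The main obstacle is the combinatorial heart of the second commutator calculation: isolating the sets $A(E,F,n)$ as the correct ``small'' sets, which requires carefully tracing how the Bernoulli shift rearranges letters of a normal form. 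This calibration---specifically the dependence of the vertex condition on $g$ through $F\cup gF$---is what distinguishes the graph-wreath case from the wreath product argument in \cite{hik} and is the crucial reason the definition of $A(E,F,n)$ takes its present form.
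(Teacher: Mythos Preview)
Your approach has a genuine gap at the step ``Intersecting both containments gives $[a,JbJ]\in\mathcal{K}(\mathcal{F})\cap\mathcal{K}(\mathcal{G})$.'' You have shown that commutators with generators from $S_{\mathcal{S}}(G)$ land in $\mathcal{K}(\mathcal{F})$ and commutators with generators from $S(H_v)$ land in $\mathcal{K}(\mathcal{G})$, but to conclude you need \emph{every} commutator $[a,JbJ]$ to lie in the intersection $\mathcal{K}(\mathcal{F}\wedge\mathcal{G})=\mathcal{K}(\mathcal{F})\cap\mathcal{K}(\mathcal{G})$. This fails already for $f\in S_{\mathcal{S}}(G)$: the commutator $[1\otimes f, Ju_{(h,g)}J]$ is multiplication by the function $(h',g')\mapsto (f-f^g)(g')$ composed with a unitary, and the level sets of this function have the form $H_{\Gamma}\times T$ with $T$ small relative to $\mathcal{S}$. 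Such a set is \emph{not} small relative to $\mathcal{G}$, since translates of $H_{\mathrm{Lk}v}$ have a single $G$-coordinate and translates of $A(E,F,n)$ constrain the $H_{\Gamma}$-part to bounded word length. Hence $[1\otimes f, Ju_{(h,g)}J]\notin\mathcal{K}(\mathcal{G})$ in general, and no single nuclear boundary algebra with the commutator property for $\mathcal{K}(\mathcal{F}\wedge\mathcal{G})$ can be built this way. (Your description of the $S(H_v)$-commutators is also off: they land in $\mathcal{K}(\{H_{\mathrm{Lk}v}\})$ as in Claim~3.3, not via the sets $A(E,F,n)$; those sets play a different role.)

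The paper's proof is substantially more elaborate precisely to circumvent this obstruction. First, a map $m:H_{\Gamma}\rtimes G\to \ell^1(V\Gamma)$ with controlled asymptotics (Proposition~4.5) is used to produce a $G$-equivariant u.c.p.\ map $\ell^{\infty}(V\Gamma)\to\mathcal{C}_{\mathcal{G}}$, where $\mathcal{C}_{\mathcal{G}}$ is a carefully chosen quotient of $\ell^{\infty}(H_{\Gamma}\rtimes G)$; this is what forces the $G$-action on $\mathcal{C}_{\mathcal{G}}$ to be amenable and yields the (AO)-type map relative to $\mathcal{G}$ alone (Proposition~4.6). Combining this with bi-exactness of $G$ relative to $\mathcal{S}$ is then a separate, nontrivial step: one sets up two short exact sequences of crossed products by $G\times G$, establishes the existence of the connecting maps via amenability arguments and the five lemma (Claim~4.7), proves the outer maps are nuclear (Claim~4.8), and uses an approximate-unit criterion (Lemma~4.10) to conclude the middle map is nuclear (Claim~4.9). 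The sets $A(E,F,n)$ enter only through the asymptotics of $m$, not through any commutator computation. The upshot is that passing from separate control relative to $\mathcal{F}$ and $\mathcal{G}$ to control relative to $\mathcal{F}\wedge\mathcal{G}$ is the technical heart of the argument, and your proposal skips it entirely.
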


We divide the proof of Theorem 4.2 into three subsections. 

\subsection{A map with a good asymptotic property}

First we construct a map $m : H_{\Gamma}\rtimes G\to \ell^1(V\Gamma)$ with a good asymptotic property. We have the following lemma from \cite[Lemma 4.3]{hik}.

\begin{proposition}
Under the above setting, assume that $G\action V\Gamma$ has finite isotropy groups and finitely many orbits. 

Then, there is a function $|\cdot|_G: G\to \mathbb{R}_{\geqq 0}$, $|\cdot|_H: H\to \mathbb{R}_{\geqq 0}$, $|\cdot|_{\Gamma}: V\Gamma \to \mathbb{R}_{\geqq 0}$ such that, for every $g, g'\in G$, $h, h'\in H$, $v\in V\Gamma$, and a positive integer $n$ the following holds. 
\begin{enumerate}
\item $|gg'|_G\leqq |g|_G+|g'|_G$, $|hh'|_H\leqq |h|_H+|h'|_H$.

\item $|gv|_{\Gamma}\leqq |g|_G+|v|_{\Gamma}$.

\item The sets 
\[
\{ g\in G \mid |g|_G\leqq n\}, \{ h\in H \mid |h|_H\leqq n\}, \{ v\in V\Gamma \mid |v|_{\Gamma}\leqq n\} 
\]
are finite. 
\end{enumerate}
\end{proposition}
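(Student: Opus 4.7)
The plan is to exploit countability of $G$, $H$, and $V\Gamma$: first build proper subadditive length functions on the countable groups $G$ and $H$ by a weighted-enumeration construction, then transfer the $G$-length to $V\Gamma$ via the finitely many orbit representatives. The assumption that $G\action V\Gamma$ has finitely many orbits is precisely what makes the last step proper; the finiteness of stabilisers is not needed here.

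For a countable discrete group $K$ (to be applied with $K=G$ and $K=H$), enumerate $K=\{k_0=e,k_1,k_2,\ldots\}$ and define
\[
|k|_K := \inf\Bigl\{\,\sum_{j=1}^m (i_j+1) : k = k_{i_1}^{\epsilon_1}\cdots k_{i_m}^{\epsilon_m},\ \epsilon_j\in\{\pm 1\},\ m\geqq 0\,\Bigr\}.
\]
Every $k$ admits the trivial factorisation $k=k_i^{+1}$ for the unique $i$ with $k=k_i$, so $|k|_K<\infty$ and $|e|_K=0$. Subadditivity $|kk'|_K\leqq|k|_K+|k'|_K$ is immediate from concatenation of two near-optimal factorisations. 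For properness, any element of $\{k : |k|_K\leqq n\}$ admits a factorisation whose letters lie in the finite set $\{k_j^{\pm 1} : 0\leqq j\leqq n-1\}$, and only finitely many such weighted words have total weight $\leqq n$. This yields $|\cdot|_G$ on $G$ and $|\cdot|_H$ on $H$, giving (1) and the $G$, $H$ cases of (3).

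For $|\cdot|_\Gamma$, choose representatives $v_1,\ldots,v_k\in V\Gamma$ of the finitely many $G$-orbits and define
\[
|v|_\Gamma := \min\bigl\{\,|g|_G+i : 1\leqq i\leqq k,\ v = g v_i\,\bigr\},
\]
where the minimum is taken over the unique $i$ with $v\in Gv_i$ and $g$ ranges in the corresponding coset of the stabiliser of $v_i$. The compatibility bound (2) is direct: if $v=g_0 v_i$ realises the minimum defining $|v|_\Gamma$, then $g'v=(g'g_0)v_i$, so
\[
|g'v|_\Gamma \leqq |g'g_0|_G + i \leqq |g'|_G + |g_0|_G + i = |g'|_G + |v|_\Gamma
\]
by subadditivity of $|\cdot|_G$. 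For properness of $|\cdot|_\Gamma$, if $|v|_\Gamma\leqq n$ then $v=gv_i$ for some pair $(g,i)$ with $i\leqq n$ and $|g|_G\leqq n$; since the sublevel set of $|\cdot|_G$ is finite and $i$ ranges over the finite set $\{1,\ldots,k\}$, only finitely many such $v$ occur. There is no real obstacle—the content is purely bookkeeping—and the role of the hypothesis is entirely concentrated in the use of the finite orbit space $G\backslash V\Gamma$ to parametrise $V\Gamma$ by pairs (representative, group element).
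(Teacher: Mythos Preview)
Your argument is correct. The weighted word-length construction on a countable group is standard and yields a proper subadditive length, and your transfer to $V\Gamma$ via the finitely many orbit representatives is exactly the right move; your observation that finiteness of stabilisers plays no role in this particular statement is also accurate.

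There is nothing to compare against in the paper itself: the paper does not prove this proposition but simply cites \cite[Lemma 4.3]{hik}. Your self-contained construction is precisely the kind of argument one finds behind such a citation, so in effect you have supplied what the paper omits.
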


We fix such length functions for $G, H, V\Gamma$ as in Proposition 4.3.

\begin{definition}
For $z=(h=h_1h_2\cdots h_m, g)\in H_{\Gamma}\rtimes G$, we set
\begin{align}
m(z) &:=\sum_{v\in \mathop{supp}(h)} \min\{ |v|_{\Gamma}, |g^{-1}v|_{\Gamma}\} \delta_{v}+ \sum_{i=1}^m |h_i|_{H} \delta_{v_i}, \\
|z|_{f} &:=\| m(z)\|_1
\end{align}
where $h_1h_2\cdots h_m$ is one of the normal forms of $h\in H_{\Gamma}$, $h_i\in H_{v_i}$ for each $i$. 
(Again notice that this definition is independent of the choice of the normal form thanks to Theorem 2.11.)
\end{definition}

The proof of the following proposition is the same as that of \cite[Lemma 5.5, 5.6]{hik}. 

\begin{proposition}
The following holds for $z=(h=h_1h_2\cdots h_m, g)\in H_{\Gamma}\rtimes G$ and
$\mathcal{G}=\{ A(E, F, n) \mid E\subset V\Gamma, F\subset H, n\} \cup\{ H_{\mathrm{Lk}v}\mid v\in V\Gamma\}$; 
\begin{enumerate}
\item $|z|_f\to \infty$ when $z\to \infty/\mathcal{G}$. 

\item $\dfrac{|\mathop{supp}h|}{|z|_f} \to 0$ when $z\to \infty/\mathcal{G}$. 

\item For any $v\in V\Gamma$ and $x\in H_v$, 
\begin{align}
\| m(xz)-m(z) \|_1, \| m(zx)-m(z) \|_1 \leqq |x|_f.  
\end{align}
In particular, $\dfrac{\| m(xz)-m(z) \|_1}{|z|_f}, \dfrac{\| m(zx)-m(z) \|_1}{|z|_f} \to 0$ when $z\to \infty/\mathcal{G}$.
\item For any $k\in G$, 
\begin{align}
\| m(kz)-k.m(z) \|_1, \| m(zk)-m(z) \|_1\leqq |k|_G|\mathop{supp}h|. 
\end{align}
In particular, $\dfrac{\| m(kz)-k.m(z) \|_1}{|z|_f}, \dfrac{\| m(zk)-m(z) \|_1}{|z|_f} \to 0$ when $z\to \infty/\mathcal{G}$.
\end{enumerate}
\end{proposition}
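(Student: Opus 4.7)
I would follow \cite[Lemmas 5.5, 5.6]{hik}, adapted to the graph-wreath setting by invoking Lemma 2.12 whenever a single syllable is multiplied in. Fix integer-valued length functions as in Proposition 4.3, normalised so that $|h|_H \geq 1$ for $h \neq e$, and abbreviate $d_g(v) := \min\{|v|_\Gamma, |g^{-1}v|_\Gamma\}$. The technical heart will be (2); once it is established, the remaining parts reduce to routine bookkeeping with normal forms.

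For (1), I observe that if $|z|_f \leq N$ then every syllable lies in the finite set $E := \{h \in H : |h|_H \leq N\}$, the word length $m$ is bounded by $\sum_i |h_i|_H \leq N$, and $\mathop{supp}(h) \subseteq F \cup gF$ for the finite set $F := \{v \in V\Gamma : |v|_\Gamma \leq N\}$, so $\{z : |z|_f \leq N\} \subseteq A(E, F, N) \in \mathcal{G}$, which is small. For (2), I would deduce the statement from (1) by showing that the ``bad'' set $\{|\mathop{supp}(h)| > \varepsilon |z|_f\}$ is itself a sublevel set of $|\cdot|_f$. Fix $\varepsilon > 0$, let $K := \lceil 2/\varepsilon \rceil$ and $F := \{v : |v|_\Gamma \leq K\}$. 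Since there are at most $2|F|$ vertices $v \in \mathop{supp}(h)$ with $d_g(v) \leq K$ and each of the remaining vertices contributes more than $K$ to $|z|_f$, we obtain
\[
|\mathop{supp}(h)| \leq 2|F| + |z|_f / K.
\]
Combined with $|z|_f < |\mathop{supp}(h)|/\varepsilon$ and $K\varepsilon \geq 2$, this forces $|\mathop{supp}(h)| \leq 4|F|$ and hence $|z|_f < 4|F|/\varepsilon$, so (1) applies.

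For (3), I would write $xz = (xh, g)$ and $zx = (h\sigma_g(x), g)$ and apply Lemma 2.12 to the vertex group $H_v$ (respectively $H_{gv}$): the normal form of $xh$ (resp.\ of $h\sigma_g(x)$) differs from that of $h$ in at most one syllable sitting at $v$ (resp.\ $gv$), which is either modified, inserted, or removed, with length change bounded by $|x|_H$. Hence $\|m(xz) - m(z)\|_1 \leq d_g(v) + |x|_H \leq |v|_\Gamma + |x|_H = |x|_f$, and similarly for $zx$ using $d_g(gv) \leq |v|_\Gamma$. For (4), using $kz = (\sigma_k(h), kg)$ and $zk = (h, gk)$, a direct expansion of $m$ yields
\[
m(kz) - k.m(z) = \sum_{v \in \mathop{supp}(h)} \bigl(\min\{|kv|_\Gamma, |g^{-1}v|_\Gamma\} - d_g(v)\bigr)\delta_{kv},
\]
with an analogous formula for $m(zk) - m(z)$ replacing $g^{-1}$ by $k^{-1}g^{-1}$ in the distance term. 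Since $|\min\{a,c\} - \min\{b,c\}| \leq |a-b|$ and $\bigl||kv|_\Gamma - |v|_\Gamma\bigr| \leq |k|_G$ by Proposition 4.3 (2), each summand is bounded by $|k|_G$, producing the stated inequality. The ``in particular'' assertions of (3) and (4) then follow by dividing by $|z|_f$ and invoking (1) (with $|x|_f$ constant) or (2), respectively.
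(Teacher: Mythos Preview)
Your proposal is correct and follows essentially the same route as the paper: the containment $\{|z|_f\le N\}\subset A(E,F,N)$ for (1), the counting estimate $|\mathop{supp}h|\le 2|F|+|z|_f/K$ leading to $|\mathop{supp}h|\le 4|F|$ for (2), the Lemma~2.12 case analysis for (3), and the direct expansion plus $|\min\{a,c\}-\min\{b,c\}|\le|a-b|$ for (4) are exactly what the paper does. Your explicit normalisation $|h|_H\ge 1$ for $h\ne e$ (needed to bound the syllable count $m$ in (1)) is a helpful clarification that the paper leaves implicit.
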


\begin{proof}
(1) It suffices to prove that the set $\{ z\in H_{\Gamma}\rtimes G \mid |z|_f\leqq C\}$
is small relative to $\mathcal{G}$ for any positive integers $C$. 
But by the definition of $m(z)$ it easily follows that if $|(h_1h_2\cdots h_m, g)|_f\leqq C$ then the quantities $|\mathop{supp}h|$, $\min\{ |v|_{\Gamma}, |g^{-1}v|_{\Gamma}\}$, and $|h_i|_H$ are all smaller than $C$. 

Therefore we obtain
\[
\{ z\in H_{\Gamma}\rtimes G \mid |z|_f\leqq C\} \subset
A(\{ |\cdot |_H\leqq C\}, \{ |\cdot |_{\Gamma}\leqq C\}, C )
\]
and the conclusion follows by the assumption on $|\cdot|_{\Gamma}$ and $|\cdot|_H$. 

(2) It suffices to prove that, for any positive integer $C$, the set of all $z=(h,g)$ satisfying $|z|_f\leqq C |\mathop{supp} h|$ is small relative to $\mathcal{G}$. 
We claim that, if $|z|_f\leqq C |\mathop{supp} h|$, then
\[
|\mathop{supp} h| \leqq 4 |B(\Gamma,2C)|,
\] 
from which, combined with the assumption and (1), the conclusion follows. 
Here $B(\Gamma,R)=\{v\in V\Gamma \mid |v|_{\Gamma} \leqq R\}$ for a positive integer $R$. 

Suppose not. Then for every $g\in G$ one gets
\[
|B(\Gamma, 2C)\cup gB(\Gamma,2C)|< \dfrac{1}{2}|\mathop{supp}h|. 
\]
Using this inequality and the fact that $\min\{ |v|_{\Gamma}, |g^{-1}v|_{\Gamma}\}>2C$ for every $v\notin B(\Gamma, 2C)\cup gB(\Gamma,2C)$, the following inequality holds;
\begin{align}
|\mathop{supp}h| 
&\leqq |B(\Gamma, 2C)\cup gB(\Gamma,2C) | + |\mathop{supp}h \backslash (B(\Gamma, 2C)\cup gB(\Gamma,2C))| \\
&< \dfrac{1}{2}|\mathop{supp}h| + \dfrac{1}{2C}\sum_{v\in \mathop{supp}h \backslash B(\Gamma, 2C)\cup gB(\Gamma,2C)} \min\{ |v|_{\Gamma}, |g^{-1}v|_{\Gamma}\}\\
&\leqq \dfrac{1}{2}|\mathop{supp}h| + \dfrac{1}{2C}|(h,g)|_f \\
&\leqq |\mathop{supp}h|. 
\end{align}
This is a contradiction.

(3) First, note that, by the proof of Lemma 2.12, the normal form of $xh$ should be one of the following form. 
\begin{itemize}
\item $xh_1h_2\cdots h_m$, when $\|xh\| = \| h\| +1$
\item $h_1\cdots h_{i-1}(xh_i)\cdots h_n$,  when $\|xh\| = \| h\|$
\item $h_1\cdots h_{i-1}h_{i+1}\cdots h_n$, when $\|xh\| = \| h\| -1$
\end{itemize}
In particular, $\mathop{supp}(xh)$ must be either $\{v\} \cup \mathop{supp}(h)$,  $\mathop{supp}(h)$, or $\mathop{supp}(h) \backslash \{ v \}$. 
Then, by Definition 4.4, 
\begin{align}
m(xz) = 
\begin{dcases}
\sum_{w\in \mathop{supp}(h)\cup \{ v \}} \min\{ |w|_{\Gamma}, |g^{-1}w|_{\Gamma}\} \delta_{w}+ \sum_{j=1}^m |h_i|_{H} \delta_{v_i} + |x|_H\delta_v,  
& (\|xh\| = \| h\| +1), \\
\sum_{w\in \mathop{supp}(h)} \min\{ |w|_{\Gamma}, |g^{-1}w|_{\Gamma}\} \delta_{w}+ \sum_{j=1, j\neq i}^m |h_i|_{H} \delta_{v_i} + |xh_i|_H\delta_v,  
& (\|xh\| = \| h\| ), \\
\sum_{w\in \mathop{supp}(h)\backslash \{ v \}} \min\{ |w|_{\Gamma}, |g^{-1}w|_{\Gamma}\} \delta_{w}+ \sum_{j=1, j\neq i}^m |h_i|_{H} \delta_{v_i}, 
& (\|xh\| = \| h\| -1).
\end{dcases}
\end{align}
In each case, by easy calculation we deduce that $\| m(xz)-m(z) \|_1 $ is smaller than $\min\{ |v|_{\Gamma}, |g^{-1}v|_{\Gamma}\} + |x|_H = |x|_f$. 
The right version follows similarly. 

(4) Note that $kz=(\sigma_k(h), kg)$. We see, by the definition of $m(z)$, that
\begin{align}
m(kz) &= \sum_{v\in k\mathop{supp}(h)} \min\{ |v|_{\Gamma}, |g^{-1}k^{-1}v|_{\Gamma}\} \delta_{v}+ \sum_{i=1}^m |h_i|_{H} \delta_{kv_i}, \text{and}
\\
k.m(z) &= \sum_{v\in \mathop{supp}(h)} \min\{ |v|_{\Gamma}, |g^{-1}v|_{\Gamma}\} \delta_{kv}+ \sum_{i=1}^m |h_i|_{H} \delta_{kv_i}. 
\end{align}
By taking the difference, it follows that 
\begin{align}
&\| m(kz)-k.m(z) \|_1 \\
\leqq 
&\| \sum_{v\in \mathop{supp}(h)} (\min\{ |kv|_{\Gamma}, |g^{-1}v|_{\Gamma}\} - \min\{ |v|_{\Gamma}, |g^{-1}v|_{\Gamma}\}) \delta_{kv} \|_1 \\
\leqq 
&\sum_{v\in \mathop{supp}(h)} | |kv|_{\Gamma} - |v|_{\Gamma} | \\
\leqq 
&|k|_G|\mathop{supp}h|
\end{align}
and the first inequality is proven. The second one can be proven similarly using 
$zk=(h,gk)$. 
\end{proof}

\subsection{(AO) property relative to $\mathcal{G}$}
We keep the same notation as in the previous subsection. 
\begin{proposition}
Let $C=C^{\ast}_{\lambda}(H_{\Gamma}\rtimes G)$. Then, the map
\begin{align}
\theta : C\algtensor JCJ &\longto \mathcal{M}(\mathcal{K}(\mathcal{G}))/\mathcal{K}(\mathcal{G}) \\
x\otimes JyJ &\longmapsto [xJyJ]
\end{align}
is well-defined and continuous with respect to the minimal tensor norm. 
Moreover, its extension $\Theta$ to $C\mintensor JCJ$ is nuclear and has a u.c.p.\@ lift. 
\end{proposition}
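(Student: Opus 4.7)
The plan is to exhibit a nuclear $C^*$-subalgebra $\mathcal{A}\subset B(\ell^2(H_\Gamma\rtimes G))$ containing $C$ and satisfying $[a, JbJ]\in \mathcal{K}(\mathcal{G})$ for every $a,b\in \mathcal{A}$. Once such an $\mathcal{A}$ is in hand, the composition
\[
x\otimes JyJ \longmapsto xJyJ + \mathcal{K}(\mathcal{G})
\]
is a well-defined $*$-homomorphism on $\mathcal{A}\algtensor J\mathcal{A}J$, which is bounded with respect to $\|\cdot\|_{\min}$ because nuclearity of $\mathcal{A}$ (hence of $J\mathcal{A}J$) forces the two natural tensor norms to agree. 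Restricting to $C\algtensor JCJ$ gives the claimed $\theta$. The nuclear and liftable extension $\Theta$ on $C\mintensor JCJ$ is obtained by combining Arveson's extension theorem (to push $\Theta$ up to $\mathcal{A}\mintensor J\mathcal{A}J$ and back into $B(\ell^2(H_\Gamma\rtimes G))$) with the Choi--Effros lifting theorem applied to the nuclear domain.

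To build $\mathcal{A}$, I recycle the nuclear $C^*$-algebra $A_0\subset B(\ell^2 H_\Gamma)$ constructed in the proof of Theorem A, generated by $C^*_\lambda(H_v)$ and $S(H_v)$ for each $v\in V\Gamma$, for which one has $[x,JyJ]\in \mathcal{K}(\{H_{\mathrm{Lk}v}\})$ for $x,y\in A_0$. The $G$-action on $H_\Gamma$ permutes the vertex groups, and therefore lifts to a $G$-action on $B(\ell^2 H_\Gamma)$ preserving $A_0$; let $U_g\in \mathcal{U}(\ell^2 H_\Gamma)$ denote the implementing unitaries. Under $\ell^2(H_\Gamma\rtimes G)\cong \ell^2 H_\Gamma\otimes \ell^2 G$, define
\[
\mathcal{A} := C^*\bigl\{\, a\otimes 1,\ U_g\otimes \lambda^G_g \ :\ a\in A_0,\ g\in G \,\bigr\},
\]
which is the image inside $B(\ell^2(H_\Gamma\rtimes G))$ of the reduced crossed product $A_0\rtimes_r G$ and which manifestly contains $C^*_\lambda(H_\Gamma)\rtimes_r G \supset C$.

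For the commutator estimate I verify $[a,JbJ]\in \mathcal{K}(\mathcal{G})$ on generators of $\mathcal{A}$ and $J\mathcal{A}J$. For $a,b\in A_0$ (extended by $1$ on $\ell^2 G$), this reduces to Theorem A, since $\mathcal{K}(\{H_{\mathrm{Lk}v}\})\otimes B(\ell^2 G)\subset \mathcal{K}(\mathcal{G})$ by inclusion of $\{H_{\mathrm{Lk}v}\}$ in $\mathcal{G}$. For the mixed commutators involving the covariant unitaries $U_g\otimes \lambda^G_g$ and $J(U_g\otimes \lambda^G_g)J$, I use the asymptotic estimates of Proposition 4.5 (items (3) and (4)), which say that conjugation by elements of $H_\Gamma\rtimes G$ perturbs $m(\cdot)$ only negligibly in comparison with $|z|_f$. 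Concretely, the offending matrix coefficients of $[a,JbJ]$ are controlled by expressions involving $\|m(xz)-m(z)\|_1/|z|_f$ and $\|m(kz)-k.m(z)\|_1/|z|_f$, both of which tend to $0$ as $z\to\infty/\mathcal{G}$ by Proposition 4.5; this realizes the commutator as an element of $\mathcal{K}(\mathcal{G})$.

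The main obstacle is nuclearity of $\mathcal{A}$. One cannot conclude this from generalities, because $G$ need not be amenable and the $G$-action on $A_0$ need not be amenable in the abstract. The way I would handle this is to re-realize $\mathcal{A}$ as the $C^*$-algebra associated with an amenable action: the amenable $H_\Gamma$-action on $S(H_\Gamma)$ produced in the proof of Theorem A is $G$-equivariant (because $G$ permutes the vertex factors), so it extends to an action of $H_\Gamma\rtimes G$ on a compact space $X$ whose $\mathcal{G}$-boundary behaviour exhibits amenability, whence $C(X)\rtimes_r (H_\Gamma\rtimes G)\cong \mathcal{A}$ is nuclear. A secondary technical point is that the construction of $A_0$ must be made $G$-symmetric from the start (i.e.\ the u.c.p.\ approximations of $A_0$ in Claim 3.2 should be chosen equivariantly across the $G$-orbits of $V\Gamma$, which uses the finite-orbit assumption).
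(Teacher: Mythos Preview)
Your outline contains two genuine gaps, and both sit exactly where the paper invests its real work.

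\textbf{Gap 1: nuclearity of $\mathcal{A}=A_0\rtimes_r G$.} You correctly flag that this is the crux, but your fix is not a proof. You assert that the amenable $H_\Gamma$-space produced in Theorem~A is $G$-equivariant and that the resulting $(H_\Gamma\rtimes G)$-action is amenable, but you give no mechanism for the latter; ``$\mathcal{G}$-boundary behaviour exhibits amenability'' is precisely the statement to be proved. There is no reason to expect $G\curvearrowright A_0$ to be amenable: $A_0$ has no obvious central copy of $\ell^\infty(V\Gamma)$ to which one could transport the amenability of $G\curvearrowright\ell^\infty(V\Gamma)$. The paper does \emph{not} attempt to make $D\rtimes_r G$ nuclear. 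Instead it passes to a quotient: it introduces the commutative algebra $\mathcal{C}_{\mathcal{G}}=\mathcal{S}_{\mathcal{G}}/c_0(H_\Gamma\rtimes G,\mathcal{G})$ and shows $G\curvearrowright\mathcal{C}_{\mathcal{G}}$ is amenable by building a $G$-equivariant u.c.p.\ map $\ell^\infty(V\Gamma)\to\mathcal{C}_{\mathcal{G}}$ via $\mu(z)=m(z)/|z|_f$. Proposition~4.5 is what guarantees this map lands in $\mathcal{C}_{\mathcal{G}}$ and is equivariant. One then forms the amenable $(G\times G)$-action on $\mathcal{C}_{\mathcal{G}}\mintensor D\mintensor JDJ$ and takes \emph{that} crossed product, which is nuclear; restricting the resulting $\ast$-homomorphism to $C\algtensor JCJ$ gives $\theta$.

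\textbf{Gap 2: misuse of Proposition~4.5 for commutators.} Proposition~4.5 gives asymptotic estimates for the map $m$; it says nothing about matrix coefficients of operator commutators, and your sentence ``the offending matrix coefficients of $[a,JbJ]$ are controlled by $\|m(xz)-m(z)\|_1/|z|_f$'' has no content as written. In the paper, Proposition~4.5 is used \emph{only} to produce amenability of $G\curvearrowright\mathcal{C}_{\mathcal{G}}$; it plays no role in any commutator computation. The commutator input the paper actually needs is much weaker than what you claim: one only needs $[a,JbJ]\in\mathcal{K}(\mathcal{G}')$ for $a,b\in D$ (not in the crossed product), and even this does not reduce verbatim to Theorem~A because the modular conjugation $J$ for $H_\Gamma\rtimes G$ twists by $\sigma_g$ on each $g$-slice. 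The paper computes $[f,Ju_hJ](\xi\otimes\delta_g)=([f,J_0 u_{\sigma_g(h)}J_0]\xi)\otimes\delta_g$ and uses the finite-isotropy hypothesis to see that only finitely many slices contribute. Your reduction ``$\mathcal{K}(\{H_{\mathrm{Lk}v}\})\otimes B(\ell^2 G)\subset\mathcal{K}(\mathcal{G})$'' skips this twist entirely.

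In short, your architecture (one nuclear $\mathcal{A}\supset C$ with $[\mathcal{A},J\mathcal{A}J]\subset\mathcal{K}(\mathcal{G})$) is the template from the graph-product case, but in the graph-wreath case the $G$-crossed-product step forces you to separate the commutator estimate (done at the level of $D$) from the nuclearity/amenability step (done at the level of $\mathcal{C}_{\mathcal{G}}$ via the map $m$), exactly as the paper does.
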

\begin{proof}
As in the proof of Theorem A, let $D_v\subset \mathcal{B}(\ell^2 H_v)$ be the $C^{\ast}$-algebra generated by $C^{\ast}_{\lambda}(H_v)$ and $S(H_v)\subset \ell^{\infty}(H_v)$ for each $v\in V\Gamma$, and $D\subset B(\ell^2 H_{\Gamma})\subset B(\ell^2 (H_{\Gamma}\rtimes G))$ the $C^{\ast}$-algebra generated by $(D_v)_{v\in V\Gamma}$. 

Note that, by Claim 3.2, $D$ is nuclear. Moreover, according to the proof of Claim 3.3, for any $v,w\in V\Gamma$, $f\in S(H_v), h\in H_w$, $g\in G$ and  
$\xi = \xi_1\otimes\xi_2\otimes \cdots\otimes \xi_n$ from $\ell^2(H_{v_1})\otimes \ell^2(H_{v_2})^o \otimes \cdots\otimes \ell^2(H_{v_n})$ for a fixed irreducible word $v_1v_2\ldots v_n$ on $\Gamma$, one obtains
\begin{align}
&[f,Ju_hJ](\xi\otimes \delta_g)= 
([f,Ju_{\sigma_g(h)}J]\xi)\otimes \delta_g \\
=&
\begin{cases}
([f,Ju_hJ](\xi_1))\otimes \xi_2\otimes\cdots\otimes\xi_n, 
& v=gw=v_1 \text{ and } v_2, \ldots, v_n\in \mathrm{Lk}(v), \\
0, & \text{otherwise}.
\end{cases}
\end{align}
Since the action $G\curvearrowright \Gamma$ has finite isotropy groups, for each tuple of $v$, $w$, $f$, $h$, and $\xi$, 
the value of the expression above is nonzero for only finitely many $g\in G$. Therefore, we as well get $[a, JbJ]\in \mathcal{K}(\mathcal{G}')$ for any $a,b\in D$ in this situation, when we set $\mathcal{G}'=\{ H_{\mathrm{Lk}v}\mid v\in V\Gamma\}\subset \mathcal{G}$. 

Consequently, the map 
\begin{align}
\Psi : D\algtensor JDJ &\longto \mathcal{M}(\mathcal{K}(\mathcal{G}'))/\mathcal{K}(\mathcal{G}') \\
x\otimes JyJ &\longmapsto [xJyJ]
\end{align}
is well-defined and continuous with respect to the minimal tensor norm. 
Note that the conclusion here still holds when $\mathcal{G}'$ is replaced with any family of subsets $\mathcal{E}$ with $\mathcal{K}(\mathcal{G}')\subset \mathcal{K}(\mathcal{E})$, especially with $\mathcal{E}=\mathcal{G}$. 

Now we turn to dealing with the crossed product. Define the map
$\mu : H_{\Gamma}\rtimes G\to \ell^1(V\Gamma)$ by $\mu(z)=m(z)/|z|_f$ and let $q: \ell^{\infty}(H_{\Gamma}\rtimes G)\to \ell^{\infty}(H_{\Gamma}\rtimes G)/c_0(H_{\Gamma}\rtimes G, \mathcal{G})$ be the quotient map. Also we define 
three commutative \Cstaralgs as follows; 
\begin{align}
\mathcal{L}_{\mathcal{G}}
&= \{ f\in \ell^{\infty} (H_{\Gamma}\rtimes G)
\mid f- \prescript{x}{}{f}, f-f^x\in c_0(H_{\Gamma}\rtimes G, \mathcal{G}) \text{ for every } x\in H_{\Gamma}\}, \\
\mathcal{S}_{\mathcal{G}} 
&= \{ f\in \mathcal{L}_{\mathcal{G}}
\mid f-f^g \in c_0(H_{\Gamma}\rtimes G, \mathcal{G}) \text{ for every } g\in G\}, \\
\mathcal{C}_{\mathcal{G}}&= \mathcal{S}_{\mathcal{G}}/c_0(H_{\Gamma}\rtimes G, \mathcal{G}). 
\end{align}

Here $\prescript{x}{}{f}$, $f^x$ denote the left and right regular representation respectively. 
Note that the left regular action $G\curvearrowright \ell^{\infty}(H_{\Gamma}\rtimes G)/c_0(H_{\Gamma}\rtimes G, \mathcal{G})$ restricts to an action $\alpha: G\curvearrowright \mathcal{C}_{\mathcal{G}}$. We claim that this action is amenable. 

Let $\mu^{\ast} : \ell^{\infty}(V\Gamma) \to \ell^{\infty}(H_{\Gamma}\rtimes G)$ be the adjoint u.c.p.\@ map, i.e. $\mu^{\ast}(f)(g)= \int f d\mu(g)$. 
Then, according to Proposition 4.5, the image of the u.c.p.\@ map 
$q\circ \mu^{\ast} : \ell^{\infty}(V\Gamma) \to \ell^{\infty}(H_{\Gamma}\rtimes G)/c_0(H_{\Gamma}\rtimes G, \mathcal{G})$ is contained in $\mathcal{C}_{\mathcal{G}}$, and this map is left $G$-equivariant. Since $G$ is an exact group, it follows that the action $G\curvearrowright \ell^{\infty}(V\Gamma)$ is amenable and so is $\alpha : G\curvearrowright \mathcal{C}_{\mathcal{G}}$. 


Since $\ell^{\infty}(H_{\Gamma}\rtimes G)\subset \mathcal{M}(\mathcal{K}(\mathcal{G}))$ and $c_0(H_{\Gamma}\rtimes G, \mathcal{G})\subset \mathcal{K}(\mathcal{G})$, we have a well-defined $\ast$-homomorphism
\begin{align}
\Psi' : \mathcal{C}_{\mathcal{G}}\mintensor D\mintensor JDJ &\longrightarrow \mathcal{M}(\mathcal{K}(\mathcal{G}))/\mathcal{K}(\mathcal{G}) \\
f\otimes x\otimes JyJ &\longmapsto [fxJyJ]
\end{align}
which is again nuclear. 

Finally, one can define an action of $G\times G$ on the $C^{\ast}$-algebra $\mathcal{C}_{\mathcal{G}}\mintensor D\mintensor JDJ$ by
\[
G\times G\ni (g_1, g_2)\longmapsto 
\alpha(g_1)\otimes \mathrm{Ad}\lambda(g_1) 
\otimes \mathrm{Ad}\rho(g_2)
\]
(Here $\lambda$ and $\rho$ denote the left and right regular representation respectively). Then, since $\alpha$ is an amenable action, this action is again amenable, and the full crossed product $C^{\ast}$-algebra $(\mathcal{C}_{\mathcal{G}}\mintensor D\mintensor JDJ)\rtimes_f (G\times G)$ is nuclear and coincides with the reduced crossed product. Hence the map $\Psi'$ extends to the nuclear $\ast$-homomorphism
\begin{align}
\Theta' : (\mathcal{C}_{\mathcal{G}}\mintensor D\mintensor JDJ)\rtimes_r (G\times G) &\longrightarrow \mathcal{M}(\mathcal{K}(\mathcal{G}))/\mathcal{K}(\mathcal{G}) \\
(f\otimes x\otimes JyJ, (g_1, g_2)) &\longmapsto [fxJyJu_{g_1}Ju_{g_2}J]. 
\end{align}
Restrict this map to $C\algtensor JCJ \subset (D\mintensor JDJ)\rtimes_r (G\times G)$ and we get the desired map. 
\end{proof}

\subsection{Putting relative bi-exactness together}

The argument here is identical to the proof of \cite[Lemma 3.13]{hik}. However we present it for completeness. 

\begin{proof}[Proof of Theorem 4.2]
Recall that $G$ is bi-exact relative to $\mathcal{S}$, and $\mathcal{F}=\{ H_{\Gamma}\cdot S\mid S\in \mathcal{S}\}$. 
We set $\tilde{G}=G\times G$. 

Let $\mathcal{K}=\mathcal{K}(\mathcal{F}\wedge\mathcal{G})=\mathcal{K}(\mathcal{F})\cap\mathcal{K}(\mathcal{G})$. 
We define three \Cstaralgs $\mathscr{B}\subseteq\mathscr{A}\subseteq\mathscr{M}$ 
of $\mathcal{B}(\ell^2(H_{\Gamma}\rtimes G))$ by
\begin{align}
\mathscr{B}&=C^{\ast} \{C^{\ast}(H_{\Gamma}), 
JC^{\ast}(H_{\Gamma})J, 1\otimes \ell^{\infty}G\}, \\
\mathscr{A}&=C^{\ast} \{C^{\ast}(H_{\Gamma}), 
JC^{\ast}(H_{\Gamma})J, \mathcal{L}_{\mathcal{G}}\}, \\
\mathscr{M}&=C^{\ast} \{C^{\ast}(H_{\Gamma}), 
JC^{\ast}(H_{\Gamma})J, \ell^{\infty}(H_{\Gamma}\rtimes G)\}. 
\end{align}
Each of these algebras have an action of $\tilde{G}$: 
\[
\tilde{G}\ni (g_1, g_2)\longmapsto \mathrm{Ad}(u_{g_1}Ju_{g_2}J). 
\]

We often simply write $\mathscr{A}/(\mathcal{K}\cap\mathscr{A})$
by $\mathscr{A}/\mathcal{K}$ (similarly for $\mathscr{B}$, $\mathscr{M}$ instead of $\mathscr{A}$
and $\mathcal{K}(\mathcal{F})$, $\mathcal{K}(\mathcal{G})$ instead of $\mathcal{K}$). 

We claim that
\begin{align}
\Phi : C\algtensor JCJ &\longto \mathcal{M}(\mathcal{K})/\mathcal{K} \tag{4.1}  \\
x\otimes JyJ &\longmapsto [xJyJ] 
\end{align}
is min-continuous, nuclear, and its extension has a u.c.p.\@ lift $C\mintensor JCJ\to \mathcal{M}(\mathcal{K})$, from which the result follows by Proposition 2.6. 
We prove this by proving the three claims below. 
We will denote $\rtimes_r$ by the reduced crossed product \Cstaralg
and $\rtimes_f$ by the full crossed product \Cstaralg.

\begin{claim}
There are $\ast$-homomorphisms $\varphi$, $\pi$, $\psi$ between the short exact sequences
\[
\begin{tikzcd}
  0 \arrow[r] 
    & ((\mathcal{K}(\mathcal{F})\cap \mathscr{A})/\mathcal{K})\rtimes_r \tilde{G} \arrow[r] \arrow[d, "\varphi"]
    & (\mathscr{A}/\mathcal{K})\rtimes_r \tilde{G} \arrow[r] \arrow[d, "\pi"]
    & (\mathscr{A}/\mathcal{K}(\mathcal{F}))\rtimes_r \tilde{G} \arrow[r] \arrow[d, "\psi"]
    & 0 \\
  0 \arrow[r]
    & ((\mathcal{K}(\mathcal{F})\cap \mathscr{M})/\mathcal{K})\rtimes_f \tilde{G} \arrow[r]
    & (\mathscr{M}/\mathcal{K})\rtimes_f \tilde{G} \arrow[r]
    & (\mathscr{M}/\mathcal{K}(\mathcal{F}))\rtimes_f \tilde{G} \arrow[r]
    & 0
\end{tikzcd}
\]
which are induced by the inclusion $\mathscr{A}\subseteq \mathscr{M}$. 
\end{claim}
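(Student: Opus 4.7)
The inclusion $\mathscr{A} \subseteq \mathscr{M}$ is $\tilde{G}$-equivariant, since the $\tilde{G}$-action by $\mathrm{Ad}(u_{g_1} J u_{g_2} J)$ preserves the generators of both algebras. Both $\mathcal{K} = \mathcal{K}(\mathcal{F}) \cap \mathcal{K}(\mathcal{G})$ and $\mathcal{K}(\mathcal{F})$ are $\tilde{G}$-invariant ideals of $\mathcal{B}(\ell^2(H_{\Gamma} \rtimes G))$ by the very definition of $\mathcal{K}(\cdot)$. Passing to the quotients therefore yields a commutative ladder of $\tilde{G}$-equivariant $\ast$-homomorphisms between the two short exact sequences of $\tilde{G}$-$C^{\ast}$-algebras underlying the claimed diagram; this is the purely algebraic ``skeleton'' of the ladder, prior to taking any crossed product.

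To upgrade this skeleton into $\varphi$, $\pi$, $\psi$ as $\ast$-homomorphisms from a reduced crossed product on the top to a full crossed product on the bottom, the natural route is to establish amenability of the $\tilde{G}$-action on $\mathscr{A}/\mathcal{K}$ (and similarly on its sub- and quotient algebras $(\mathcal{K}(\mathcal{F}) \cap \mathscr{A})/\mathcal{K}$ and $\mathscr{A}/\mathcal{K}(\mathcal{F})$). Once amenability is in hand, full and reduced crossed products coincide for each entry of the top row, so $\varphi$, $\pi$, $\psi$ become the $\ast$-homomorphisms supplied by the functoriality of the full crossed product applied to the skeleton above. To obtain this amenability I would reuse Proposition 4.6: we already showed there that the left regular action $G \curvearrowright \mathcal{C}_{\mathcal{G}}$ is amenable. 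Since $\mathscr{A}$ is generated by $C^{\ast}(H_{\Gamma})$, $JC^{\ast}(H_{\Gamma})J$, and $\mathcal{L}_{\mathcal{G}}$, and by the defining property of $\mathcal{L}_{\mathcal{G}}$ the two $H_{\Gamma}$-actions on it become trivial modulo $c_0(H_{\Gamma} \rtimes G,\mathcal{G})$, combining this with the amenability of $G \curvearrowright \mathcal{C}_{\mathcal{G}}$ and the tensor-product construction from the proof of Proposition 4.6 should furnish a $\tilde{G}$-equivariant \ucp map from a tensor product of the form $\mathcal{C}_{\mathcal{G}} \mintensor D \mintensor JDJ$ onto a dense piece of $\mathscr{A}/\mathcal{K}$, which witnesses amenability of $\tilde{G} \curvearrowright \mathscr{A}/\mathcal{K}$.

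Exactness of the two rows is then standard. The bottom row is exact because the full crossed product is always an exact functor on short exact sequences of $\tilde{G}$-$C^{\ast}$-algebras. The top row is exact because $\tilde{G} = G \times G$ is exact (as a product of the exact group $G$ with itself), so the reduced crossed product preserves exactness of the underlying sequence of $\tilde{G}$-invariant ideals. Commutativity of the full diagram is automatic from naturality, since all maps descend from the single equivariant inclusion $\mathscr{A} \hookrightarrow \mathscr{M}$.

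The main obstacle is the amenability statement for $\tilde{G} \curvearrowright \mathscr{A}/\mathcal{K}$: one must carefully identify the quotient $\mathscr{A}/\mathcal{K}$ with (a quotient of) an algebra of the form $\mathcal{C}_{\mathcal{G}} \mintensor D \mintensor JDJ$ in a $\tilde{G}$-equivariant manner, so that amenability transfers from the latter. Once that structural identification is done, the remainder of the claim is routine bookkeeping using functoriality of crossed products and the exactness of $G$.
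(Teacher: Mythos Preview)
Your plan has a genuine gap at exactly the point you flag as ``the main obstacle''. The algebra $\mathcal{C}_{\mathcal{G}} \mintensor D \mintensor JDJ$ from Proposition~4.6 cannot be used to witness amenability of $\tilde{G}\curvearrowright \mathscr{A}/\mathcal{K}$, for two independent reasons. First, $\mathcal{C}_{\mathcal{G}}=\mathcal{S}_{\mathcal{G}}/c_0(\,\cdot\,,\mathcal{G})$ only maps into quotients by $\mathcal{K}(\mathcal{G})$, not by $\mathcal{K}=\mathcal{K}(\mathcal{F})\cap\mathcal{K}(\mathcal{G})$; there is no natural $\ast$-homomorphism $\mathcal{C}_{\mathcal{G}}\to \mathscr{A}/\mathcal{K}$. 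Second, even at the level of $\mathscr{A}/\mathcal{I}(\mathcal{G})$ the map you want is \emph{not} $\tilde{G}$-equivariant: in Proposition~4.6 the second copy of $G$ acts trivially on $\mathcal{C}_{\mathcal{G}}$, whereas the right-$G$-translation action on $\mathcal{L}_{\mathcal{G}}/c_0(\,\cdot\,,\mathcal{G})\subset \mathscr{A}/\mathcal{I}(\mathcal{G})$ is nontrivial (only right $H_{\Gamma}$-translation is killed). So the identification you propose cannot be made $\tilde{G}$-equivariant.

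The paper avoids attacking $\pi$ directly. It constructs $\psi$ and $\varphi$ separately, using two \emph{different} sources of amenability, and then obtains $\pi$ from the five lemma. For $\psi$ one uses the relative bi-exactness of $G$: the amenable $\tilde{G}$-algebra $\ell^{\infty}G/c_0(G;\mathcal{S})$ embeds equivariantly into $\mathscr{A}/\mathcal{K}(\mathcal{F})$ via $1\otimes\ell^{\infty}G$, so $\tilde{G}\curvearrowright\mathscr{A}/\mathcal{K}(\mathcal{F})$ is amenable. (Your proposal never invokes the hypothesis that $G$ is bi-exact relative to $\mathcal{S}$, which is essential here.) For $\varphi$ one first identifies $(\mathcal{K}(\mathcal{F})\cap\mathscr{A})/\mathcal{K}$ with an ideal of $\mathscr{A}/\mathcal{I}(\mathcal{G})$, and then shows $\tilde{G}\curvearrowright\mathscr{A}/\mathcal{I}(\mathcal{G})$ is amenable using a ``flip'' trick: there is a $\tilde{G}$-equivariant \ucp map $\mathcal{C}_{\mathcal{G}}\otimes\overline{\mathcal{C}_{\mathcal{G}}}\to \mathcal{L}_{\mathcal{G}}/c_0(\,\cdot\,,\mathcal{G})$, where $\overline{\mathcal{C}_{\mathcal{G}}}$ carries the \emph{flipped} left-right $G$-action. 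This double-copy construction is precisely what repairs the equivariance problem above, and it is the missing idea in your sketch.
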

\begin{proof}[Proof of Claim 4.7]
First note that the left-right regular action 
$\tilde{G} \action \ell^{\infty}G/c_0(G;\mathcal{S})$ 
is amenable by Proposition 2.6. 
Then the existence of $\psi$ follows from the $\tilde{G}$-equivariant inclusion
\[
1 \otimes \ell^{\infty}G/c_0(G;\mathcal{S})
\subset \ell^{\infty}(H_{\Gamma}\rtimes G/c_0(H_{\Gamma}\rtimes G;\mathcal{F}) 
\subset \mathscr{B}/\mathcal{K}(\mathcal{F})
\subset \mathscr{A}/\mathcal{K}(\mathcal{F}). 
\]

For the existence of $\varphi$, we make use of the following 
$\tilde{G}$-equivqriant inclusion 
\begin{align}
(\mathcal{K}(\mathcal{F})\cap \mathscr{A}) / \mathcal{K}
&= \mathcal{I}(\mathcal{F}) / (\mathcal{I}(\mathcal{F})\cap \mathcal{I}(\mathcal{G})) \\
&\cong (\mathcal{I}(\mathcal{F})+\mathcal{I}(\mathcal{G})) / \mathcal{I}(\mathcal{G}) \\
&\lhd \mathscr{A} / \mathcal{I}(\mathcal{G}), 
\end{align}
where $\mathcal{I}(\mathcal{F})=\mathcal{K}(\mathcal{F})\cap \mathscr{A}\lhd \mathscr{A}$. Note that the last one canonically includes 
$\mathcal{L}_{\mathcal{G}} / c_0(H_{\Gamma}\rtimes G; \mathcal{G})$ where $\mathcal{L}_{\mathcal{G}}$ is as in the proof of the previous proposition. 
Since there is a $\tilde{G}$-equivariant u.c.p.\@ map
\begin{align}
\mathcal{C}_{\mathcal{G}} \otimes \overline{\mathcal{C}_{\mathcal{G}}}
&\longto \mathcal{L}_{\mathcal{G}}/ c_0(H_{\Gamma}\rtimes G; \mathcal{G}) \\
f\otimes g &\longmapsto fg
\end{align}
where $\mathcal{C}_{\mathcal{G}}$ is endowed with trivial right $G$-action and $\overline{\mathcal{C}_{\mathcal{G}}}$ is the \Cstaralg $\mathcal{C}_{\mathcal{G}}$ with its left and right $\tilde{G}$-action flipped; $g\cdot \overline{f} \cdot h = h^{-1}fg^{-1}$ for $\overline{f}\in \overline{\mathcal{C}_{\mathcal{G}}}$. 
Given that the action $G\action \mathcal{C}_{\mathcal{G}}$ is amenable, it follows that $\tilde{G}\action \mathscr{A} / \mathcal{I}(\mathcal{G})$
is an amenable action
and therefore its reduced and full crossed product coincide. 
By passing to the closed ideal generated by $(\mathcal{K}(\mathcal{F})\cap \mathscr{A}) / \mathcal{K}$ and $\tilde{G}$, it also follows that the reduced and full crossed product of 
$\tilde{G}\action (\mathcal{K}(\mathcal{F})\cap \mathscr{A}) / \mathcal{K}$
coincide. 
This verifies the existence of $\varphi$. 

Finally the existence of $\pi$ follows from the five lemma. 
\end{proof}

\begin{claim}
In the above setting, $\varphi$ and $\psi\vert_{(\mathscr{B}/\mathcal{K})\rtimes_r \tilde{G}}$ are nuclear maps. 
\end{claim}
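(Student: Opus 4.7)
The plan for both nuclearity claims is the same: I will use that every $\ast$-homomorphism out of a nuclear \Cstaralg is nuclear, so it suffices to exhibit the source of each map as a nuclear \Cstaralg (or to show that the map factors through one).

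For $\varphi$, the idea is to use the embedding from the proof of Claim 4.7 to realise the source $((\mathcal{K}(\mathcal{F})\cap \mathscr{A})/\mathcal{K})\rtimes_r\tilde{G}$ as an ideal of $(\mathscr{A}/\mathcal{I}(\mathcal{G}))\rtimes_r\tilde{G}$, where we set $\mathcal{I}(\mathcal{G})=\mathcal{K}(\mathcal{G})\cap\mathscr{A}$. It then suffices to show the latter crossed product is nuclear. The action $\tilde{G}\action \mathscr{A}/\mathcal{I}(\mathcal{G})$ is amenable (as was established during the proof of Claim 4.7), so the reduced and full crossed products coincide and the task reduces to proving nuclearity of $\mathscr{A}/\mathcal{I}(\mathcal{G})$ itself. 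For this, the well-definedness of $\Psi'$ in the proof of Proposition 4.6 upgrades to a $\ast$-homomorphism from $\mathcal{C}_{\mathcal{G}}\mintensor D\mintensor JDJ$ whose image contains $\mathscr{A}/\mathcal{I}(\mathcal{G})$; the source is nuclear because $\mathcal{C}_{\mathcal{G}}$ is commutative and $D$ is nuclear by Claim 3.2, so $\mathscr{A}/\mathcal{I}(\mathcal{G})$ is a quotient of a nuclear algebra and hence nuclear.

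For $\psi|_{(\mathscr{B}/\mathcal{K})\rtimes_r\tilde{G}}$, my plan is to observe that the restriction factors through $(\mathscr{B}/(\mathscr{B}\cap\mathcal{K}(\mathcal{F})))\rtimes_r\tilde{G}$ and then to establish the nuclearity of this intermediate crossed product. The quotient $\mathscr{B}/(\mathscr{B}\cap\mathcal{K}(\mathcal{F}))$ is built from $C^\ast_\lambda(H_\Gamma)$, $JC^\ast_\lambda(H_\Gamma)J$, and the image of $1\otimes \ell^\infty G$, which collapses to $\ell^\infty G/c_0(G;\mathcal{S})$ since $1\otimes c_0(G;\mathcal{S})\subseteq\mathcal{K}(\mathcal{F})$. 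A direct coordinate analysis on $\ell^2(H_\Gamma\rtimes G)$ should furnish a surjective $\ast$-homomorphism from the min-tensor product $(C^\ast_\lambda(H_\Gamma)\mintensor JC^\ast_\lambda(H_\Gamma)J)\mintensor(\ell^\infty G/c_0(G;\mathcal{S}))$ onto this quotient; the first factor is nuclear as a sub-\Cstaralg of $D\mintensor JDJ$, and the second is commutative. The $\tilde{G}$-action decomposes as a spatially inner action on the first factor (via the left and right regular representations) and the left-right regular action on $\ell^\infty G/c_0(G;\mathcal{S})$, which is amenable by the relative bi-exactness of $G$ together with Proposition 2.6. The combined action is therefore amenable, which makes the reduced crossed product nuclear.

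The hardest part will be verifying that the two multiplication maps---onto $\mathscr{A}/\mathcal{I}(\mathcal{G})$ and onto $\mathscr{B}/(\mathscr{B}\cap\mathcal{K}(\mathcal{F}))$---extend continuously from the algebraic tensor products to the minimal tensor products as genuine $\ast$-homomorphisms. For $\varphi$ this is essentially already packaged inside the proof of Proposition 4.6. For $\psi$, one has to examine carefully how $C^\ast_\lambda(H_\Gamma)$, $JC^\ast_\lambda(H_\Gamma)J$, and $1\otimes \ell^\infty G$ interact on $\ell^2(H_\Gamma\rtimes G)$---accounting for the twisting coming from the $G$-action on $H_\Gamma$---and check that the requisite commutation relations hold modulo $\mathcal{K}(\mathcal{F})$, exploiting the finiteness of isotropy groups of $G\action V\Gamma$.
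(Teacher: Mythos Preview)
Your treatment of $\varphi$ is essentially the paper's: both of you identify the source $((\mathcal{K}(\mathcal{F})\cap\mathscr{A})/\mathcal{K})\rtimes_r\tilde{G}$ with a closed ideal of $(\mathscr{A}/\mathcal{I}(\mathcal{G}))\rtimes_r\tilde{G}$ (as in Claim~4.7), observe that the $\tilde{G}$-action on $\mathscr{A}/\mathcal{I}(\mathcal{G})$ is amenable, and conclude nuclearity of the ideal and hence of $\varphi$.

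For $\psi$, however, your argument contains a genuine error. You assert that $C^{\ast}_{\lambda}(H_{\Gamma})\mintensor JC^{\ast}_{\lambda}(H_{\Gamma})J$ is nuclear ``as a sub-\Cstaralg of $D\mintensor JDJ$''. This inference is false: nuclearity does not pass to sub-\Cstaralgs (only exactness does). Indeed, as soon as $H_{\Gamma}$ is nonamenable, $C^{\ast}_{\lambda}(H_{\Gamma})$ is not nuclear, and hence neither is any minimal tensor product having it as a tensor factor. Your conclusion that the coefficient algebra is nuclear therefore fails, and with it the step ``amenable action on a nuclear algebra $\Rightarrow$ nuclear reduced crossed product''.

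The paper sidesteps this issue by arguing that the \emph{entire} source $(\mathscr{B}/\mathcal{K})\rtimes_r\tilde{G}$ is nuclear, using the amenability of the left--right $\tilde{G}$-action on $\ell^{\infty}G/c_0(G;\mathcal{S})$ together with the nuclearity of $D$ (not of $C^{\ast}_{\lambda}(H_{\Gamma})$). The point is that modulo $\mathcal{K}$ one may replace the generators $C^{\ast}_{\lambda}(H_{\Gamma})$ and $JC^{\ast}_{\lambda}(H_{\Gamma})J$ by the genuinely nuclear algebras $D$ and $JDJ$, after which the multiplication map becomes a $\ast$-homomorphism out of a nuclear minimal tensor product. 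Your outline can be repaired along exactly these lines: work with $D$ and $JDJ$ rather than with $C^{\ast}_{\lambda}(H_{\Gamma})$ and its opposite, so that the relevant tensor product really is nuclear, and then your amenability argument goes through.
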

\begin{proof}[Proof of Claim 4.8]
Note again that, since the left-right regular action of $\tilde{G}$ on 
$\mathcal{L}_{\mathcal{G}} / c_0(H_{\Gamma}\rtimes G; \mathcal{G})$ 
and $\ell^{\infty}G/c_0(G;\mathcal{S})$ are amenable, 
the reduced crossed products 
$(\mathscr{A} / \mathcal{I}(\mathcal{G}))\rtimes_r \tilde{G}$ 
and $(\mathscr{B}/\mathcal{K})\rtimes_r \tilde{G}$ are nuclear. 
The second assertion follows directly from the latter. 
For the first one, just recall that the \Cstaralg
$((\mathcal{K}(\mathcal{F})\cap \mathscr{A}) / \mathcal{K})\rtimes_r \tilde{G}$
is $\ast$-isomorphic to a closed ideal in
$(\mathscr{A} / \mathcal{I}(\mathcal{G}))\rtimes_r \tilde{G}$
and hence nuclear. 
\end{proof}

\begin{claim}
The inclusion $\mathscr{B}\subseteq \mathscr{M}$ extends to a nuclear u.c.p.\@ map
\[
\pi' : (\mathscr{B}/\mathcal{K})\rtimes_r \tilde{G}
\longrightarrow (\mathscr{M}/\mathcal{K})\rtimes_f \tilde{G}. 
\]
\end{claim}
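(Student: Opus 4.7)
The strategy is to build $\pi'$ in two steps: first I would identify the reduced crossed product norm on the source with the full one, and then extend the inclusion $\mathscr{B}/\mathcal{K}\hookrightarrow \mathscr{M}/\mathcal{K}$ to the crossed-product level via the universal property of the full crossed product. Nuclearity of the resulting map will then be automatic from the nuclearity of the source already established in Claim 4.8.

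The key structural observation I would record is that $1\otimes \ell^\infty G$ lies inside the center of $\mathscr{B}$. Computing on the basis $\{\delta_{(h,g)}\}_{(h,g)\in H_\Gamma\rtimes G}$ of $\ell^2(H_\Gamma\rtimes G)$, a function $f\in \ell^\infty G$ acts diagonally via the $G$-coordinate, while both the left regular action of $H_\Gamma$ and the right action generating $JC^*(H_\Gamma)J$ preserve the $G$-coordinate (the latter because $(h',g)\cdot h=(h'\sigma_g(h),g)$ for $h\in H_\Gamma$). Hence they commute with $1\otimes f$, and after quotienting, $\ell^\infty G/c_0(G;\mathcal{S})$ becomes a $\tilde G$-invariant central subalgebra of $\mathscr{B}/\mathcal{K}$, turning $\mathscr{B}/\mathcal{K}$ into a $C(X)$-algebra over $X=\mathrm{Spec}(\ell^\infty G/c_0(G;\mathcal{S}))$.

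By Proposition 2.6 applied to the bi-exactness of $G$ relative to $\mathcal{S}$, the $\tilde G$-action on $\ell^\infty G/c_0(G;\mathcal{S})$ is amenable. The Anantharaman--Delaroche principle---transferring amenability from the base of a $C(X)$-algebra to the total algebra, which is the same ingredient already underlying the nuclearity statement in Claim 4.8---then yields that the $\tilde G$-action on $\mathscr{B}/\mathcal{K}$ is amenable. In particular,
\[
(\mathscr{B}/\mathcal{K})\rtimes_r \tilde G=(\mathscr{B}/\mathcal{K})\rtimes_f \tilde G.
\]

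The inclusion $\mathscr{B}\subseteq\mathscr{M}$ is $\tilde G$-equivariant, and since $\mathcal{K}\cap\mathscr{B}\subseteq\mathcal{K}\cap\mathscr{M}$ it descends to a unital $\tilde G$-equivariant $*$-homomorphism $\iota:\mathscr{B}/\mathcal{K}\to\mathscr{M}/\mathcal{K}$. By the universal property of the full crossed product, $\iota$ induces a unital $*$-homomorphism $\iota\rtimes\tilde G:(\mathscr{B}/\mathcal{K})\rtimes_f \tilde G\to (\mathscr{M}/\mathcal{K})\rtimes_f \tilde G$; precomposing with the identification from the previous step yields the required $\pi'$. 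This $\pi'$ is a unital $*$-homomorphism, hence u.c.p., and it is nuclear because any $*$-homomorphism out of a nuclear $C^*$-algebra is nuclear and the source is nuclear by Claim 4.8. The main obstacle I anticipate is making the transfer of amenability from the central subalgebra to all of $\mathscr{B}/\mathcal{K}$ rigorous in this concrete setting; however, this is the same Anantharaman--Delaroche input already tacitly invoked in the proof of Claim 4.8, so no genuinely new machinery should be required.
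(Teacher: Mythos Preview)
There is a genuine gap in the central-subalgebra step. You claim that after quotienting by $\mathcal{K}=\mathcal{K}(\mathcal{F})\cap\mathcal{K}(\mathcal{G})$ the image of $1\otimes\ell^\infty G$ in $\mathscr{B}/\mathcal{K}$ is $\ell^\infty G/c_0(G;\mathcal{S})$. This is false: for $S\subset G$ small relative to $\mathcal{S}$, the function $1\otimes 1_S=1_{H_\Gamma\cdot S}$ lies in $c_0(H_\Gamma\rtimes G;\mathcal{F})$ (this is exactly how $\mathcal{F}$ is defined), but it does \emph{not} lie in $c_0(H_\Gamma\rtimes G;\mathcal{G})$, because $H_\Gamma\cdot S$ contains a full copy of $H_\Gamma$ and no finite union of translates of the sets $A(E,F,n)$ or $H_{\mathrm{Lk}v}$ covers all of $H_\Gamma$. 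Hence $1\otimes c_0(G;\mathcal{S})\not\subset\mathcal{K}\cap\mathscr{B}$, the quotient map $\mathscr{B}\to\mathscr{B}/\mathcal{K}$ does not factor through $\ell^\infty G/c_0(G;\mathcal{S})$, and you have no amenable $\tilde G$-algebra sitting centrally inside $\mathscr{B}/\mathcal{K}$. Without this, both the equality of reduced and full crossed products and the asserted nuclearity of the source collapse; note also that Claim~4.8 only provides nuclearity of the maps $\varphi$ and of $\psi$ restricted, not of the algebra $(\mathscr{B}/\mathcal{K})\rtimes_r\tilde G$ itself, so the final appeal to Claim~4.8 is a misreading.

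The paper's argument is organized precisely around this obstruction. It does not try to exhibit a single amenable central subalgebra of $\mathscr{B}/\mathcal{K}$; instead it restricts the map $\pi$ from Claim~4.7 and proves its nuclearity via the short exact sequence $0\to(\mathcal{K}(\mathcal{F})\cap\mathscr{B})/\mathcal{K}\to\mathscr{B}/\mathcal{K}\to\mathscr{B}/\mathcal{K}(\mathcal{F})\to 0$. On the quotient $\mathscr{B}/\mathcal{K}(\mathcal{F})$ your centrality observation \emph{does} apply (there $1\otimes c_0(G;\mathcal{S})$ is killed), giving amenability and hence nuclearity of $\psi$ restricted; on the ideal the amenability comes from a completely different source, namely the embedding into $\mathscr{A}/\mathcal{I}(\mathcal{G})$ where $\mathcal{C}_{\mathcal{G}}$ supplies an amenable action. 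The two are glued by Lemma~4.10, using that the projections $1\otimes 1_S$ form a common approximate unit for the ideal on both rows. In short, the amenability you need really lives on two separate pieces of $\mathscr{B}/\mathcal{K}$, and the extension lemma is what replaces the single Anantharaman--Delaroche transfer you were hoping for.
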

\begin{proof}[Proof of Claim 4.9]
It suffices to prove that the restriction of $\pi$ 
to $(\mathscr{B}/\mathcal{K})\rtimes_r \tilde{G}$ is nuclear. 

Since the ideal 
$\mathcal{K}(\mathcal{F})\cap \mathscr{B}\lhd \mathcal{K}(\mathcal{F})\cap \mathscr{M}$ has an approximation unit $\{ 1_{H_{\Gamma}S} = 1\otimes 1_S\mid S
\text{: small relative to }
\mathcal{S}\}$, this follows from Lemma 4.10 below. 
\end{proof}

Recall that, by the proof of Proposition 4.6, there is a nuclear $\ast$-homomorphism with a u.c.p.\@ lift
\begin{align}
\Psi : C^{\ast}_{\lambda}(H_{\Gamma})\mintensor
JC^{\ast}_{\lambda}(H_{\Gamma})J &\longto \mathcal{M}(\mathcal{K}(\mathcal{F}\wedge\mathcal{G}))/\mathcal{K}(\mathcal{F}\wedge\mathcal{G}) \\
x\otimes JyJ &\longmapsto [xJyJ]
\end{align}
since $\mathcal{K}(\mathcal{G}')\subset\mathcal{K}(\mathcal{F}\wedge\mathcal{G})$. 
By Claim 4.7, we obtain a chain of u.c.p.\@ maps
\begin{align}
C\mintensor JCJ &\xrightarrow{\cong} (C^{\ast}_{\lambda}(H_{\Gamma})\mintensor
JC^{\ast}_{\lambda}(H_{\Gamma})J)\rtimes_r \tilde{G} \\
&\xrightarrow{\Psi} (\mathscr{B}/\mathcal{K})\rtimes_r \tilde{G} \\
&\xrightarrow{\pi} (\mathscr{M}/\mathcal{K})\rtimes_f \tilde{G} \\
&\xrightarrow{f} \mathcal{M}(\mathcal{K})/\mathcal{K}
\end{align}
where $f$ is the $\ast$-homomorphism defined by 
$f\vert_{\mathscr{M}/\mathcal{K}}=\mathrm{id}_{\mathscr{M}/\mathcal{K}}$ 
and $f(u_{(g_1, g_2)})=u_{g_1}Ju_{g_2}J$. Then the composition of all of the maps extends the map $\Phi$ above
and the claim follows since $\pi$ is nuclear. 
\end{proof}

\begin{lemma}
Suppose that there is two short exact sequences of \Cstaralgs and u.c.p.\@ maps
$\varphi : I\to J$, $\pi : A\to B$ and $\psi : A/I\to B/J$ 
\[
\begin{tikzcd}
  0 \arrow[r] 
    & I \arrow[r] \arrow[d, "\varphi"]
    & A \arrow[r] \arrow[d, "\pi"]
    & A/I \arrow[r] \arrow[d, "\psi"]
    & 0 \\
  0 \arrow[r]
    & J \arrow[r]
    & B \arrow[r]
    & B/J \arrow[r]
    & 0
\end{tikzcd}
\]
with $A$ being exact and $\varphi$, $\psi$ being nuclear. If there is an approximation unit $(a_i)_i$ in $I$ such that $(\varphi(a_i))_i$ is again an approximation unit in $J$, then $\pi$ is nuclear as well. 
\end{lemma}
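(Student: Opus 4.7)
The plan is to construct c.p.\@ approximations of $\pi$ factoring through matrix algebras by combining the nuclear approximations of $\varphi$ and $\psi$ through a quasicentral approximate unit, following the classical ``nuclearity of extensions'' template due to Choi-Effros and Kasparov. By Kasparov's technical lemma (averaging approximate units to gain quasicentrality) I first arrange, passing to convex combinations, that $(a_i)$ is quasicentral in $A$ and, simultaneously, $(\varphi(a_i))$ is a quasicentral approximate unit of $J$ in $B$; this is consistent with the hypotheses because convex combinations of approximate units remain approximate units, and because $\varphi$ is linear.

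Fix a finite subset $F\subset A$ and $\varepsilon>0$. Using nuclearity of $\varphi$, I find u.c.p.\@ maps $\alpha_1:I\to \Mat_m(\mathbb{C})$ and $\beta_1:\Mat_m(\mathbb{C})\to J$ approximating $\varphi$ within $\varepsilon$ on the finite set $\{a_i^{1/2}xa_i^{1/2}\mid x\in F\}$, and extend $\alpha_1$ to a u.c.p.\@ map $\bar\alpha_1:A\to \Mat_m(\mathbb{C})$ via Arveson's extension theorem. Using nuclearity of $\psi$, I find u.c.p.\@ maps $\alpha_2:A/I\to \Mat_k(\mathbb{C})$ and $\beta_2:\Mat_k(\mathbb{C})\to B/J$ approximating $\psi$ within $\varepsilon$ on $\{q(x)\mid x\in F\}$, and lift $\beta_2$ to a u.c.p.\@ map $\tilde\beta_2:\Mat_k(\mathbb{C})\to B$ via the Choi-Effros lifting theorem, applicable since $\Mat_k(\mathbb{C})$ is separable and nuclear.

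I then combine these into c.p.\@ maps $\alpha:A\to \Mat_m(\mathbb{C})\oplus \Mat_k(\mathbb{C})$ and $\beta:\Mat_m(\mathbb{C})\oplus \Mat_k(\mathbb{C})\to B$ defined by
\[
\alpha(x)=\bigl(\bar\alpha_1(a_i^{1/2}xa_i^{1/2}),\,\alpha_2 q(x)\bigr),\quad \beta(y_1,y_2)=\beta_1(y_1)+(1-\varphi(a_i))^{1/2}\tilde\beta_2(y_2)(1-\varphi(a_i))^{1/2}.
\]
A direct computation, invoking (i) the asymptotic identity $\pi(a_i^{1/2}xa_i^{1/2})\approx \varphi(a_i)^{1/2}\pi(x)\varphi(a_i)^{1/2}$, (ii) the fact that $(1-\varphi(a_i))^{1/2}j\to 0$ for any $j\in J$ since $\varphi(a_i)$ is an approximate unit of $J$, and (iii) the quasicentrality of $\varphi(a_i)$ in $B$, should show $\|\beta\alpha(x)-\pi(x)\|\to 0$ for every $x\in F$ as $i\to\infty$. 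A routine rescaling (adding a rank-one summand and normalizing) converts $\alpha$ and $\beta$ into u.c.p.\@ maps with the same limiting behaviour, yielding nuclearity of $\pi$.

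The main obstacle is step (i) above: for $\ast$-homomorphisms $\pi$ it follows immediately from functional calculus, and in the intended application (Claim 4.9) $\pi$ is in fact a $\ast$-homomorphism induced by the inclusion $\mathscr{B}\subset\mathscr{M}$, so the step is automatic in the case at hand. For general u.c.p.\@ $\pi$ as stated in the lemma, the exactness hypothesis on $A$ enters here: one dilates $\pi$ via Stinespring so that the quasicentrality of $(a_i)$ can be pushed through the dilating $\ast$-representation, controlling the deviation of $\pi$ from multiplicativity along the approximate unit.
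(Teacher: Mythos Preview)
Your approach is the classical Choi--Effros/Kasparov ``nuclearity of extensions'' template, and it is genuinely different from what the paper does. The paper's proof is a two-line bidual argument: pass to $\pi^{\ast\ast}:A^{\ast\ast}\to B^{\ast\ast}$, use the canonical splittings $A^{\ast\ast}\cong I^{\ast\ast}\oplus(A/I)^{\ast\ast}$ and $B^{\ast\ast}\cong J^{\ast\ast}\oplus(B/J)^{\ast\ast}$, observe that the approximate-unit hypothesis forces $\varphi^{\ast\ast}$ to be unital and hence $\pi^{\ast\ast}=\varphi^{\ast\ast}\oplus\psi^{\ast\ast}$, and conclude weak nuclearity of $\pi^{\ast\ast}$. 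Exactness of $A$ enters only at the very end, via the standard fact that for maps out of an exact \Cstaralg, nuclearity is equivalent to weak nuclearity of the bidual extension.

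Your proposal, by contrast, has a real gap at exactly the point you flag as the ``main obstacle''. Your step (i), the asymptotic identity $\pi(a_i^{1/2}xa_i^{1/2})\approx\varphi(a_i)^{1/2}\pi(x)\varphi(a_i)^{1/2}$, is automatic for $\ast$-homomorphisms but is a genuine multiplicativity statement for a u.c.p.\ map. Your proposed fix---dilate $\pi$ via Stinespring and ``push quasicentrality through the dilating representation''---does not work as stated: quasicentrality of $(a_i)$ in $A$ makes $\sigma(a_i)$ asymptotically commute with $\sigma(A)$, but the Stinespring projection $VV^\ast$ need not lie in $\sigma(A)''$, so you get no control on $[\sigma(a_i),VV^\ast]$, which is exactly what you would need. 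Nor is it clear how exactness of $A$ helps at this step; you have not actually used it anywhere. So as written, the argument is incomplete for general u.c.p.\ $\pi$.

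You are right that in the intended application $\pi$ is a $\ast$-homomorphism and your argument would go through there. But if you want the lemma in the stated generality, the bidual route is both shorter and avoids the obstacle entirely, and it makes transparent where exactness is used.
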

\begin{proof}
Passing to the double duals, it suffices to prove that $\pi^{\bidual} : A^{\bidual}\to B^{\bidual}$ is weakly nuclear. 
Note that $\varphi^{\bidual}$ is unital by the assumption. 
Then, since we have the decompositions
$A^{\bidual} \cong I^{\bidual}\oplus (A/I)^{\bidual}$ and 
$B^{\bidual} \cong J^{\bidual}\oplus (B/J)^{\bidual}$, 
and since the restriction of $\pi^{\bidual}$ to the first component coincides with $\varphi^{\bidual}$, it follows that $\pi^{\bidual}=\varphi^{\bidual}\oplus \psi^{\bidual}$. Then the result holds since 
$\varphi^{\bidual}$ and  $\psi^{\bidual}$ are weakly nuclear. 
\end{proof}

\section{Rigidity and primeness for graph-wreath products}

In this section we study the property of graph-wreath products of von Neumann algebras. 
We will consider the following situation in the sequel. 
\begin{notation}
\begin{enumerate}
\item Let $G$ be an infinite bi-exact group and $H$ an infinite exact group. 
\item Let $B=LH$ be the group von Neumann algebra of $H$. 
\item Let $\Gamma$ be an infinite locally finite graph, and assume that there is an action $G\curvearrowright \Gamma$ with finitely many orbits such that the isotropy groups of its vertices are all finite. 
\item We denote by $B_{\Gamma}$ the graph product algebra with each vertex algebra being $B$, and by $M=B_{\Gamma}\rtimes G$ the crossed product associated with the action $\sigma:G\curvearrowright B_{\Gamma}$. 
\end{enumerate}
\end{notation}
We start by proving the following equivariant version of Lemma 2.13 (4)'.  

\begin{lemma}
Let $P\subset pMp$ be a von Neumann subalgebra such that there exists a finite subset $E\subset V\Gamma$ such that $P\prec^s_M B_E$ and $P\nprec_M B_{E'}$ for any $E'\subsetneq E$. 

If there is a finite subset $F\subset V\Gamma$ such that $P\prec_M B_F$, then there is an element $g\in G$ such that $gE\subset F$. 
\end{lemma}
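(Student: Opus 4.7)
\noindent\emph{Proof plan.} The strategy is to reduce to the graph-product analogue, Lemma 2.13 (4)$'$, by Fourier-analyzing a composite intertwiner in $M = B_{\Gamma}\rtimes G$ with respect to the $G$-decomposition $L^2 M = \bigoplus_{g\in G} L^2 B_{\Gamma}\cdot u_g$.

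First, using $P\prec^s_M B_E$ together with Theorem 2.1 (3) and refining via Lemma 2.3 against the finite family $\{B_{E'}: E'\subsetneq E\}$, I produce an integer $n$, a $\ast$-homomorphism $\pi_E : P \to \Mat_n(B_E)$, and $v_E \in \Mat_{1,n}(M)$ satisfying $v_E\pi_E(x) = xv_E$ for all $x\in P$, the fullness $v_E v_E^* = p$ (this is where the strong condition $\prec^s$ is used), and the minimality $\pi_E(P) \nprec_{\Mat_n(M)} \Mat_n(B_{E'})$ for every $E'\subsetneq E$. Separately, from $P\prec_M B_F$ and Theorem 2.1 (3), I obtain $m\in\mathbb{N}$, $\pi_F : P \to \Mat_m(B_F)$, and $v_F \in \Mat_{1,m}(M)$ with $v_F\pi_F(x) = xv_F$ for all $x\in P$ and $p_0 := v_F v_F^* \leq p$ a nonzero projection.

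Next, set $w := v_E^* v_F \in \Mat_{n,m}(M)$. Taking adjoints of $v_E\pi_E(x) = xv_E$ yields $v_E^* x = \pi_E(x)v_E^*$, hence $\pi_E(x)\,w = v_E^* x v_F = w\,\pi_F(x)$ for all $x\in P$. Moreover $\tau(ww^*) = \tau(v_E^* p_0 v_E) = \tau(p_0 v_E v_E^*) = \tau(p_0 p) = \tau(p_0) > 0$, so $w \neq 0$. Now expand $w = \sum_{g\in G} W_g u_g$ with $W_g \in \Mat_{n,m}(L^2 B_{\Gamma})$; since $\pi_E(x) \in \Mat_n(B_{\Gamma})$ has trivial $G$-component and $u_g \pi_F(x) = \sigma_g(\pi_F(x))u_g$, equating Fourier coefficients gives
\[
\pi_E(x)\,W_g \;=\; W_g\,\sigma_g(\pi_F(x)) \qquad (x\in P,\ g\in G).
\]
Choosing $g$ with $W_g \neq 0$, the intertwiner $W_g$ witnesses
\[
\pi_E(P)\;\prec_{\Mat_n(B_{\Gamma})}\;\sigma_g(\pi_F(P))\;\subset\;\Mat_m(B_{gF}).
\]

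Finally, $\pi_E(P)\subset \Mat_n(B_E)$ trivially gives $\pi_E(P)\prec^s_{\Mat_n(B_{\Gamma})} \Mat_n(B_E)$, and by the construction in Step 1 the minimality $\pi_E(P)\nprec_{\Mat_n(B_{\Gamma})} \Mat_n(B_{E'})$ holds for every $E'\subsetneq E$. The amplified form of Lemma 2.13 (4)$'$ applied inside the graph product $B_{\Gamma}$ therefore forces $E\subset gF$, i.e., $g^{-1} E\subset F$; taking $g^{-1}$ in place of $g$ concludes the proof. The main obstacle is ensuring that some Fourier component $W_g$ is nonzero, which reduces to $w\neq 0$ and hence to the fullness $v_E v_E^* = p$; this fullness is available precisely because the hypothesis on $B_E$ is $\prec^s$ rather than $\prec$, after which the rest is routine Fourier bookkeeping followed by an invocation of the graph-product analogue.
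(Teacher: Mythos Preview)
Your overall strategy---compose intertwiners, Fourier-expand along $G$, and invoke Lemma 2.13 (4)$'$ inside $B_\Gamma$---matches the paper's. The remaining steps (the Fourier computation $\pi_E(x)W_g = W_g\,\sigma_g(\pi_F(x))$ and the application of the amplified Lemma 2.13 (4)$'$) are correct.

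The gap is the claim that $P\prec^s_M B_E$ yields a single intertwiner $(n,\pi_E,v_E)$ with $v_E v_E^{\ast}=p$. Neither Theorem 2.1 (3) nor Lemma 2.3 gives this; a maximality argument only produces a countable orthogonal family of intertwiners summing to $p$, and the paper itself (cf.\ Lemma 5.3) only extracts $\tau(vv^{\ast})>1-\varepsilon$, not $vv^{\ast}=p$. Without fullness your computation $\tau(ww^{\ast})=\tau(p_0\,v_Ev_E^{\ast})$ can vanish, since $p_0=v_Fv_F^{\ast}$ and $v_Ev_E^{\ast}$ may be orthogonal projections in $P'\cap pMp$. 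Moreover, even if you allow $n=\infty$ to force fullness, you then need to argue separately that the Lemma 2.3 refinement (minimality $\pi_E(P)\nprec B_{E'}$) survives---this does follow once $v_E$ is full, since $v_E$ then implements a spatial isomorphism, but it requires a word of justification.

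The paper circumvents this by reversing the order: fix the $B_F$-intertwiner $(\theta,v)$ first, note $vv^{\ast}\in P'\cap pMp$, and only then use $\prec^s$ on the corner under $vv^{\ast}$ (combined with Lemma 2.3) to obtain an intertwiner $(\varphi,w)$ into $B_E$ with $ww^{\ast}\le vv^{\ast}$ and $\varphi(sPs)\nprec B_{E'}$. This guarantees $w^{\ast}vv^{\ast}w=w^{\ast}w\neq 0$ automatically and then reduces to the special case $P\subset B_E$, which the paper handles via the unitary-net criterion rather than Fourier-expanding a bimodule vector. Your argument is easily repaired along the same lines: pick $v_F$ first, then use $\prec^s$ to get $v_E$ with $v_Ev_E^{\ast}\le p_0$.
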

\begin{proof}
We prove it first for the case when $p\in B_E$ and $P\subset pB_Ep$. Suppose that the conclusion does not hold. Then, by Lemma 2.4 and 2.13 (4)', there is a net of unitaries $u_i\in \mathcal{U}(P)$ such that, for any $g\in G$ and $a,b \in B_\Gamma$, $\|E_{B_{gF}}(a^{\ast}u_ib)\|_2$ converges to $0$. 

We claim
$\|E_{B_F}(a^{\ast}u_ib)\|_2 \to 0$
for every $a,b\in M$, which immediately contradicts the assumption. 
It suffices to prove for the case $a^{\ast}=u_gx, b=yu_h$ where $x, y\in B_{\Gamma}$. Then
\begin{align}
\|E_{B_F}(a^{\ast}u_ib)\|_2&=\|E_{B_F}(\sigma_g(xu_iy)u_{gh})\|_2 \\
&= \delta_{gh,e} \|\sigma_g(E_{B_{g^{-1}F}}(xu_iy))\|_2\to 0
\end{align}
and the claim is proven. 

For the general case we proceed as in \cite[Claim 3.6]{hi}. 
Since $P\prec_M B_F$, there exist nonzero projections $r\in P$, $q\in B_F$, a normal $\ast$-homomorphism $\theta:rPr\to qB_Fq$, and a partial isometry $v\in rMq$ such that $v\theta(x)=xv$ for every $x\in rPr$. 
As $vv^{\ast}\in (rPr)'\cap rMr = (P'\cap pMp)r$, there is a projection $r'\in P'\cap pMp$ such that $vv^{\ast} = rr'$. By the assumption and Lemma 2.3, 
there exist nonzero projections $s\in rPrr'$, $q'\in B_E$, a normal $\ast$-homomorphism $\varphi:sPs\to q'B_Eq'$, and a partial isometry $w\in sMq'$ such that $w\varphi(x)=xw$ for every $x\in sPs$
and $Q:=\varphi(sPs)\nprec B_{E'}$ for each $E'\subsetneq E$. 

Combined together, we get $w^{\ast}vv^{\ast}w=w^{\ast}rr'w=w^{\ast}w \neq 0$ and, for every $x\in sPs$, 
\[ v^{\ast}w \varphi(x) = v^{\ast}xw = \theta(x) v^{\ast}w.  \]
Therefore $Q \prec_M B_F$. Now we can apply the first half. 
\end{proof}


The following lemma is a variant of \cite[Lemma 7.2]{im}. 
\begin{lemma}
Let $P\subset pMp$ be a von Neumann subalgebra such that there exists a finite subset $E\subset V\Gamma$ such that $P\prec^s_M B_E$ and $P\nprec_M B_{E'}$ for any $E'\subsetneq E$. 

Then, for every $\varepsilon>0$, there is a projection $q\in P'\cap pMp$ with $\tau(q)>1-\varepsilon$ such that $\mathcal{QN}_{qMq}(Pq)'' \prec^s B_{E\cup\mathrm{Lk}E}\rtimes G^E$, where $G^E = \{ g\in G \mid gE=E\}$ is the isotropy group of $E$.  
\end{lemma}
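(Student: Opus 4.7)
The plan is to push the quasi-normalizer structure through an intertwining provided by $P\prec^s_M B_E$ and then carry out a Fourier decomposition in the spirit of the proof of Lemma~2.14(4), combined with the graph-product variants Lemma~2.13(3)$'$ and (4)$'$, to land inside $B_{E\cup\mathrm{Lk}E}\rtimes G^E$.

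First, using $P\prec^s_M B_E$, a standard exhaustion argument produces, for any prescribed $\varepsilon>0$, a projection $q\in\mathcal{Z}(P'\cap pMp)$ with $\tau(p)-\tau(q)<\varepsilon$, a projection $e\in \Mat_n(B_E)$, a normal $\ast$-homomorphism $\theta:qPq\to e\Mat_n(B_E)e$, and a partial isometry $v\in\Mat_{1,n}(M)$ with $vv^*=q$, $v^*v\le e$, and $xv=v\theta(x)$ for every $x\in qPq$. The hypothesis $P\nprec_M B_{E'}$ for $E'\subsetneq E$ together with Lemma~2.3 allows us further to arrange that $\theta(qPq)\nprec_{\Mat_n(M)} \Mat_n(B_{E'})$ for every proper $E'\subsetneq E$. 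Set $\phi:qMq\to e\Mat_n(M)e$, $y\mapsto v^*yv$; since $vv^*=q$, $\phi$ is a normal $\ast$-homomorphism, and the identity $v^*x=\theta(x)v^*$ for $x\in qPq$ gives directly $\phi(\mathcal{QN}_{qMq}(Pq))\subset \mathcal{QN}_{e\Mat_n(M)e}(\theta(qPq))$. Writing $\mathcal{N}:=\mathcal{QN}_{qMq}(Pq)''$, we obtain $\phi(\mathcal{N})\subset \mathcal{QN}_{e\Mat_n(M)e}(\theta(qPq))''$.

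The technical heart of the argument is the subalgebra version of Lemma~2.14(4):
\[
\mathcal{QN}_{e\Mat_n(M)e}(\theta(qPq))''\subset e\Mat_n(B_{E\cup\mathrm{Lk}E}\rtimes G^E)e.
\]
Following the Fourier-coefficient method of Lemma~2.14(4), for $z\in e\Mat_n(M)e$ with $\theta(qPq)\cdot z\subset \sum_i z_i\cdot \theta(qPq)$, expand $z=\sum_g z^g u_g$ and $z_i=\sum_g z_i^g u_g$ with $z^g, z_i^g\in\Mat_n(B_\Gamma)$; comparing coefficients of $u_g$ yields $\theta(qPq)\cdot z^g\subset \sum_i z_i^g\Mat_n(B_{gE})$. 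The resulting right-$\Mat_n(B_{gE})$-finite bimodule structure, combined with the hypothesis $\theta(qPq)\nprec \Mat_n(B_{E'})$ for $E'\subsetneq E$, forces via Lemma~2.13(4)$'$ that $E\subset gE$, whence $E=gE$ by finiteness, i.e., $g\in G^E$. With $gE=E$, Lemma~2.13(3)$'$ then gives $z^g\in \Mat_n(B_{E\cup\mathrm{Lk}E})$, and therefore $z\in \Mat_n(B_{E\cup\mathrm{Lk}E}\rtimes G^E)$.

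Combining these steps, $\phi(\mathcal{N})\subset e\Mat_n(B_{E\cup\mathrm{Lk}E}\rtimes G^E)e$ with $xv=v\phi(x)$ for every $x\in\mathcal{N}$, giving $\mathcal{N}\prec_M B_{E\cup\mathrm{Lk}E}\rtimes G^E$. For stability, any projection $r\in\mathcal{Z}(\mathcal{N}'\cap qMq)$ lies in $(Pq)'\cap qMq\subset \mathcal{N}$, since every element commuting with $Pq$ is trivially a quasi-normalizer; hence $\phi(r)\in\phi(\mathcal{N})\subset \Mat_n(B_{E\cup\mathrm{Lk}E}\rtimes G^E)$, and $\phi(r)\ne 0$ because $\|\phi(r)\|_2^2=\tau(rvv^*)=\tau(r)>0$. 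The restricted intertwining through $v\phi(r)$ then yields $\mathcal{N}r\prec_M B_{E\cup\mathrm{Lk}E}\rtimes G^E$, producing the stable conclusion. The main obstacle is the Fourier-comparison step, which requires adapting the proof of Lemma~2.14(4) to the subalgebra setting by invoking Lemma~2.13(4)$'$ in place of Lemma~2.13(4); the matrix amplification bookkeeping is routine but must be kept straight to secure $vv^*=q$, since this is what makes $\phi$ a $\ast$-homomorphism and ultimately promotes $\prec$ to $\prec^s$.
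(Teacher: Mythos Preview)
Your proof is correct and follows the same strategy as the paper's: transfer $P$ into $\Mat_n(B_E)$ via the $\prec^s$ intertwining (arranging through Lemma~2.3 that the image avoids every $\Mat_n(B_{E'})$), run the Fourier-coefficient argument with Lemma~2.13(3)$'$,(4)$'$ to place the quasi-normalizer inside $\Mat_n(B_{E\cup\mathrm{Lk}E}\rtimes G^E)$, and pull back through $v$; the paper merely isolates the unamplified special case $P\subset pB_Ep$ before doing exactly this in the general case. One cosmetic point: the exhaustion argument only produces $q=vv^{\ast}\in P'\cap pMp$, not in its center, but you never use centrality anywhere, and your explicit verification of the $\prec^s$ upgrade at the end fills in a detail the paper leaves implicit.
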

\begin{proof}
As in the proof of the previous lemma, we first prove it 
for the case when $p\in B_E$ and $P\subset pB_Ep$. 
Take $x\in \mathcal{QN}_{pMp}(P)$ and $x_1, \ldots, x_k \in pMp$ such that
$Px\subset \sum_i x_iP$. 
Let $x=\sum_{g\in G} x^gu_g$ and $x_i=\sum_{g\in G} x_i^gu_g$ be their Fourier expansions. 
Then, for every $g\in G$, we get 
$Px^g\subset\sum_i x_i^g\sigma_g(P)\subset \sum x_i^gB_{gE}$ by a simple calculation. 
If $x^g \neq 0$, this shows that there is a right $B_{gE}$-finite $P$-$B_{gE}$-bimodule 
$\overline{\mathop{span}} Px^gB_{gE} \subset L^2(B_{\Gamma})$ and therefore 
$P\prec_{B_{\Gamma}} B_{gE}$. By Lemma 2.13 (4)', it follows that $E\subset gE$, and since $E$ is finite, $E=gE$ and $g\in G^E$. Moreover, by Lemma 2.13 (3)', $x^g$ belongs to $B_{E\cup\mathrm{Lk}E}$. 
Finally we get $\mathcal{QN}_{pMp}(P)\subset B_{E\cup\mathrm{Lk}E}\rtimes G^E$. 

The general case follows as in \cite[Lemma 7.2 (ii)]{im}. 
Take a positive integer $n$, 
a normal $\ast$-homomorphism $\pi : P\to \Mat_n(B_E)$, 
and a partial isometry $v\in \Mat_{1,n}(M)$ such that 
$v\pi(x)=xv$ for every $x\in P$ 
and $\tau(vv^{\ast})>1-\varepsilon$
as in Theorem 2.1. 
Using Lemma 2.3, one may moreover assume that 
$\pi(P) \nprec_{\Mat_n(M)} \Mat_n(B_{E'})$ for any proper subsets $E'\subsetneq E$. 
Then, by an argument similar to the first half of this proof, 
it follows that 
\[
\mathcal{QN}_{r\Mat_n(M)r}(\pi(P)r)\subset \mathcal{QN}_{\pi(p)\Mat_n(M)\pi(p)}(\pi(P))\subset \Mat_n(B_{E\cup\mathrm{Lk}E}\rtimes G^E), 
\]
where $r=v^{\ast}v\in \pi(P)'\cap \pi(p)\Mat_n(M)\pi(p)$. 
By letting $q=vv^{\ast}\in P'\cap pMp$, we have
\[
v^{\ast} \mathcal{QN}_{qMq}(Pq) v = \mathcal{QN}_{r\Mat_n(M)r}(\pi(P)r)
\]
and the conclusion is obtained.

\end{proof}

\subsection{Primeness for graph-wreath products}
Now we prove the primeness result for the graph-wreath products. 

\begin{proof}[Proof of Theorem E]
Assume that there is a tensor product decomposition of the \twoone factor  $M=B_{\Gamma}\rtimes G=P\overline{\otimes} Q$. 
We may assume that $P$ is nonamenable and $Q$ is diffuse by Lemma 2.14 (3). 

Then, by Theorem B, there exist a projection $p\in Q'\cap M=P$
and a vertex $v$ such that $Qp\prec^s_M B_{E}$, where $E=\mathrm{St}v$ is a finite set by the assumption.  
By shrinking $p$ and $E$ if necessary, 
one may take a projection $p$ and a non-empty finite subset $E\subset V\Gamma$ 
such that $Qp\prec^s_M B_{E}$ and $Qp\nprec_M B_{E'}$ for all proper subsets $E' \subsetneq E$. 

But then, by Lemma 5.3, there is a projection $q\in pPp$ such that 
$qMq=\mathcal{N}_{qMq}(Qq)''\prec_M B_{E\cup \mathrm{Lk}E}\subset B_{\Gamma}$, 
since $G^E$ is finite. This contradicts $G$ being infinite. 
\end{proof}

\subsection{Rigidity of graphs for graph-wreath products}

Let $(H_1, G_1\action \Gamma_1)$ and $(H_2, G_2\action \Gamma_2)$ 
be the tuples satisfying the conditions in Notation 5.1. 

For clarity, unless otherwise specified, we will write by $v$ an element in $V\Gamma_1$ and by $w$ that in $V\Gamma_2$, occasionally with indices.  

We fix the notation $B=LH_1$, $C=LH_2$, $M=B_{\Gamma_1}\rtimes G_1=L((H_1)_{\Gamma_1}\rtimes G_1)$, $N=C_{\Gamma_2}\rtimes G_2=L((H_2)_{\Gamma_2}\rtimes G_2)$, 
which is different from Notation 5.1, 
to avoid the overuse of indices. 
Also, let $S$ and $T$ be the (finite) sets of representatives of $G_1\backslash V\Gamma_1$ and $T=G_2\backslash V\Gamma_2$, respectively. 
Also recall that a graph $\Gamma$ is untransvectable if and only if 
$\mathrm{Lk}_{\Gamma}v \nsubseteq \mathrm{St}_{\Gamma}v' $ holds for each pair of vertices $v\neq v'\in V\Gamma$, 
and note that an infinite connected untransvectable graph has no vertices with degree less than 2. 

Finally suppose that there is a \twoone factor $P$ and 
there are projections $p,q\in P$
such that there are isomorphisms $M\cong pPp$, $N\cong qPq$. 
We identify $M$ with $pPp$ and $N$ with $qPq$ by these isomorphisms. 

We use the method of \cite[Proposition 6.4]{hi} to prove the following;

\begin{theorem}
In the setting above, assume moreover that
\begin{itemize}
\item $\Gamma_i$ is a untransvectable graph with $\mathrm{girth}\Gamma_i\geqq 5$, 
\item The action $G_i\action V\Gamma_i$ satisfies the following condition;
for every vertex $v\in V\Gamma_i$ and $g\in G_i$, if $gv'=v'$ holds for any $v'\in \mathrm{St}_{\Gamma_i}v$, then $g=e$, 
\end{itemize}
for $i=1,2$. 
Then, there are projections
$(p_{v,w})_{v\in S, w\in T}$ and $(q_{w,v})_{v\in S, w\in T}$ such that
the following holds for every $v\in S$ and $w\in T$. 
\begin{enumerate}
\item $p_{v,w}\in \mathcal{Z}(B'_{\mathrm{St}v}\cap pPp)$, 
$q_{w,v}\in \mathcal{Z}(C'_{\mathrm{St}w}\cap qPq)$. \\
\item $\sum_{w'\in T} p_{v,w'}=p$, $\sum_{v'\in S} q_{w,v'}=q$  \\
\item $B_{\mathrm{St}v}p_{v,w} \prec^s_P C_{\mathrm{St}w}q_{w,v}$, 
$C_{\mathrm{St}w}q_{w,v} \prec^s_P B_{\mathrm{St}v}p_{v,w}$
\end{enumerate}
\end{theorem}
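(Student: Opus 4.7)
The plan is to apply Theorem B through its relative-solidity consequence Theorem~2.7 to the algebra $N$, after embedding $M=pPp$ as a compressed copy inside an amplification $\Mat_n(N)$. Before doing so, observe that under the standing hypotheses both centers in condition~(1) are trivial. Indeed, by Lemma~2.14(5), $B_{\mathrm{St}v}'\cap M = \mathcal{Z}(B_{\mathrm{St}v}) \overline{\otimes} (B_{\mathrm{Lk}(\mathrm{St}v)} \rtimes G_{1,0}^{\mathrm{St}v})$; the linker $\mathrm{Lk}(\mathrm{St}v)$ is empty for purely combinatorial reasons (a vertex adjacent to every element of $\mathrm{St}v$ would have to be adjacent to itself), the group $G_{1,0}^{\mathrm{St}v}$ is trivial by the second bullet of the hypothesis, and $B_{\mathrm{St}v}$ is a factor by Lemma~2.13(2) since $H_1$ is i.c.c.\@. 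Hence $\mathcal{Z}(B_{\mathrm{St}v}'\cap pPp) = \C p$ and symmetrically $\mathcal{Z}(C_{\mathrm{St}w}'\cap qPq) = \C q$. It therefore suffices to produce, for each $v \in S$, a unique $\phi(v)\in T$ with $B_{\mathrm{St}v} \prec^s_P C_{\mathrm{St}\phi(v)}$, and symmetrically a unique $\psi(w)\in S$ for each $w\in T$; the projections $p_{v,w}$ and $q_{w,v}$ will then be either full or zero.

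Fix $v\in S$. Since $B_{\mathrm{St}v}$ has trivial commutant in $M$, I would not apply Theorem~2.7 to it directly, but rather to the single vertex algebra $B_v$. Choose a partial isometry $\xi\in \Mat_{1,n}(P)$ with $\xi\xi^*=p$ and $\xi^*\xi \leq q\otimes 1_n$, so that $\xi^*(pPp)\xi \subset \Mat_n(N)$. The relative commutant $(\xi^*B_v\xi)' \cap (\xi^*\xi)\Mat_n(N)(\xi^*\xi)$ contains a compression of $B_v'\cap M = \mathcal{Z}(B_v)\overline{\otimes}(B_{\mathrm{Lk}v}\rtimes G_{1,v})$ (Lemma~2.14(5) with $E=\{v\}$). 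Untransvectability of $\Gamma_1$ together with $\mathrm{girth}\,\Gamma_1\geq 5$ force $\mathrm{Lk}v$ to be an independent set of cardinality at least $2$, so $B_{\mathrm{Lk}v} = \ast_{u\in \mathrm{Lk}v}B_u$ is a free product of at least two infinite $\mathrm{II}_1$ factors and hence non-amenable; the crossed product by the finite isotropy $G_{1,v}$ remains non-amenable. Theorem~2.7 applied to $N$ (bi-exact relative to $\{C_{\mathrm{St}w}\}$ by Theorem~B) then yields $w\in V\Gamma_2$ with $B_v \prec_P C_{\mathrm{St}w}$. A standard maximal-projection iteration over $\mathcal{Z}(B_v'\cap pPp)$, using the same non-amenability input on each corner, upgrades this to $B_v \prec^s_P C_{\mathrm{St}\phi(v)}$ for a single orbit representative $\phi(v)\in T$.

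To pass from $B_v$ to $B_{\mathrm{St}v}$, I would transfer through the quasinormalizer. Since each $B_u$ with $u\in \mathrm{Lk}v$ commutes with $B_v$, we have $B_{\mathrm{St}v} \subset \mathcal{QN}_M(B_v)''$. Any partial isometry realizing $B_v \prec^s_P C_{\mathrm{St}\phi(v)}$ conjugates this quasinormalizer into the quasinormalizer of $C_{\mathrm{St}\phi(v)}$ inside (an amplification of) $N$, which by Lemma~5.3 with $E=\mathrm{St}\phi(v)$ and $\mathrm{Lk}E=\emptyset$ is contained in $C_{\mathrm{St}\phi(v)}\rtimes G_2^{\mathrm{St}\phi(v)}$. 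The isotropy group $G_2^{\mathrm{St}\phi(v)}$ is finite (it permutes a finite set in the locally finite graph $\Gamma_2$), so a standard compression argument absorbs this finite crossed product and gives $B_{\mathrm{St}v} \prec^s_P C_{\mathrm{St}\phi(v)}$. The symmetric argument with $M$ and $N$ swapped produces, for each $w\in T$, a $\psi(w)\in S$ with $C_{\mathrm{St}w} \prec^s_P B_{\mathrm{St}\psi(w)}$. Setting $p_{v,\phi(v)}=p$, $q_{w,\psi(w)}=q$, and all other entries equal to zero yields conditions (1)--(3).

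The main obstacle is the third paragraph: carrying the quasinormalizer across the partial-isometry intertwiner and collapsing the crossed-product extension $C_{\mathrm{St}\phi(v)}\rtimes G_2^{\mathrm{St}\phi(v)}$ back to $C_{\mathrm{St}\phi(v)}$. The girth condition and the freeness-on-stars hypothesis on $G_i\action V\Gamma_i$ are invoked precisely at this stage to make the empty-linker and trivial-stabilizer reductions available; in addition, one must verify that the maximal-projection argument from the second paragraph is compatible with the star-level upgrade in the third. Modulo these care points the proof is an adaptation of the graph-product argument in \cite[Proposition~6.4]{hi} to the graph-wreath setting.
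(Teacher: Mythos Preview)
Your overall strategy---apply relative bi-exactness (Theorem~B via Theorem~2.7) to the single vertex algebra $B_v$ and then upgrade to $B_{\mathrm{St}v}$ through the quasinormalizer and Lemma~5.3---matches the paper's. However, your opening reduction is invalid: you assert $\mathcal{Z}(B_{\mathrm{St}v}'\cap pPp)=\C p$ because ``$H_1$ is i.c.c.'', but i.c.c.\ is \emph{not} among the hypotheses of Theorem~5.4 (it is added only later, in the proof of Theorem~C). Without it $B_v$ need not be a factor, these centers may be non-trivial, and there is no reason to expect a single $\phi(v)$; this is precisely why the statement produces families $(p_{v,w})$, $(q_{w,v})$ rather than maps. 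The paper carries the central projections throughout: it defines $p_{v,w}$ to be the maximal projection in $\mathcal{Z}(B_{\mathrm{St}v}'\cap pPp)$ with $B_{\mathrm{St}v}p_{v,w}\prec^s C_{\mathrm{St}w}$, proves $\bigvee_{w}p_{v,w}=p$ by running the bi-exactness/quasinormalizer argument under an arbitrary projection $r$ in that center, and then establishes pairwise orthogonality and condition~(3) separately.

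Those last two steps are where untransvectability is used, and you do not invoke it. The paper argues by transitivity: if for instance $p_{v,w}p_{v,w'}\neq 0$, one chains the intertwinings through Lemma~2.2 to obtain $C_{\mathrm{St}w}\prec_P C_{\mathrm{St}w'}$ (or $B_{\mathrm{St}v}\prec_P B_{\mathrm{St}v'}$), and Lemma~5.2 then forces $\mathrm{St}(gw)\subset\mathrm{St}w'$ for some $g$, contradicting untransvectability. Your proposed uniqueness of $\phi(v)$ and the implicit relation $\psi(\phi(v))=v$ needed for condition~(3) both require exactly this argument. There is also a smaller gap in your third paragraph: Lemma~5.3 requires $P\nprec C_{E'}$ for every proper $E'\subsetneq E$, so one cannot take $E=\mathrm{St}\phi(v)$ directly. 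The paper first shrinks to a minimal such $E\subset\mathrm{St}w$, after which $E\cup\mathrm{Lk}E$ is a priori not a star; it is precisely the hypothesis $\mathrm{girth}\,\Gamma_2\geq 5$ that then guarantees $E\cup\mathrm{Lk}E\subset\mathrm{St}w'$ for some $w'$, so your account of how the girth condition enters is inverted.
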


\begin{proof}
First of all, note that $\mathrm{Lk}_{\Gamma_i}(\mathrm{St}_{\Gamma_i}v)=\emptyset$ 
and $B_{\mathrm{Lk}v}$ is a nonamenable \twoone factor 
for any $v\in\Gamma_i$, since $\Gamma_i$ is untransvectable. 

For each pair $(v,w)\in S\times T$, 
let $p_{v,w}\in \mathcal{Z}(B'_{\mathrm{St}v}\cap pPp)$ and $q_{w,v}\in \mathcal{Z}(C'_{\mathrm{St}w}\cap qPq)$ be the maximal projection 
such that $B_{\mathrm{St}v}p_{v,w}\prec^s C_{\mathrm{St}w}$ and 
$C_{\mathrm{St}w}q_{w,v}\prec^s B_{\mathrm{St}v}$. 
(These projections can be zero.)

We claim first that $\bigvee_{w'\in T} p_{v,w'}=p$ and $\bigvee_{v'\in S} q_{w,v'}=q$
for each $v$ and $w$. 
Take an arbitrary vertex $v\in S$ 
and a projection $r\in \mathcal{Z}(B'_{\mathrm{St}v}\cap pPp)$. 
Then, since $r\in\mathcal{Z}(B_v\rtimes G_0^{\mathrm{St}v})=\mathcal{Z}(B_v)$ by Lemma 2.14 (5), 
$B_vr$ and $B_{\mathrm{Lk}v}r$ are two commuting subalgebras the latter of which is nonamenable. 
Then, by the relative bi-exactness in Theorem B, 
there exists a vertex $w\in V\Gamma_2$ such that $B_{v}r\prec C_{\mathrm{St}w}$. 
By taking a conjugation by the canonical unitaries if necessary, one may take $w\in T$. 

Set $E=\mathrm{St}w$. By shrinking $r$ and $E$ if necessary, 
we may take $r\in \mathcal{Z}(B'_v\cap pPp)$ and $E\subset \mathrm{St}w$ so that $B_vr\prec^s C_E$ and $B_vr\nprec C_{E'}$ hold
for any proper subsets $E'\subset E$. 
By Lemma 5.3, there is a projection $p_0\in (B_vr)'\cap rMr= r((\mathcal{Z}(B_v)\barotimes B_{\mathrm{Lk}v})\rtimes G^v)r$ such that
$\mathcal{QN}_{p_0Mp_0}(B_vp_0)''\prec C_{E\cup \mathrm{Lk}E}$. 
But since $p_0\mathcal{QN}_M(B_v)''p_0$ is contained in $\mathcal{QN}_{p_0Mp_0}(B_vp_0)''$
by \cite[Lemma 3.5]{pop1}, it follows that 
$p_0(B_{\mathrm{St}v}\rtimes G^v)p_0\prec C_{E\cup \mathrm{Lk}E}$
together with Lemma 2.14 (4). 

Take a projection $p'\in (B_{\mathrm{St}v}\rtimes G^v)'\cap M\subset B_{\mathrm{St}v}\rtimes G^v$
such that $p_0(B_{\mathrm{St}v}\rtimes G^v)p_0p'\prec^s C_{E\cup \mathrm{Lk}E}$, 
and $z\in \mathcal{Z}(B_{\mathrm{St}v}\rtimes G^v)=\mathcal{Z}(B_v)$ be the central support of $p_0p'$. 
Note that here we have made use of Lemma 2.14 (5) again. 
Then, it follows that 
$B_{\mathrm{St}v}z\subset (B_{\mathrm{St}v}\rtimes G^v)z\prec C_{E\cup \mathrm{Lk}E}$. 
Furthermore, since $E\subset \mathrm{St}w$ for some $w\in T$ and $\mathrm{girth}\Gamma_2 \geqq 5$, either $E$ or $\mathrm{Lk}E$ is a singleton and 
therefore there is a $w'\in V\Gamma_2$ such that $E\cup \mathrm{Lk}E\subset \mathrm{St}w'$. 
This verifies our claim. 

Next, it is easily seen that 
$B_{\mathrm{St}v}p_{v,w}\prec^s C_{\mathrm{St}w}q_{w,v}$. 
Indeed, if not, by the first part of this proof, there is a $v\neq v'\in S$ 
such that $B_{\mathrm{St}v}p_{v,w}\prec C_{\mathrm{St}w}q_{w,v'}$. 
But then since $C_{\mathrm{St}w}q_{w,v'} \prec^s B_{\mathrm{St}v'}$, 
it follows that $B_{\mathrm{St}v}\prec_P B_{\mathrm{St}v'}$ by Lemma 2.2, 
and $\mathrm{St}_{\Gamma_1}(gv) \subset \mathrm{St}_{\Gamma_1}v'$ for some $g\in G_1$ by Lemma 5.2. 
But this contradicts with the assumption that $\Gamma_1$ is untransvectable. 
Similarly one gets $C_{\mathrm{St}w}q_{w,v} \prec^s B_{\mathrm{St}v}p_{v,w}$. 

Finally we prove that the condition (2) holds. Suppose that there are vertices 
$v\in S$ and $w\neq w'\in T$ such that $p'=p_{v,w}p_{v,w'}\neq 0$.
If $C_{\mathrm{St}w}q_{w,v}\prec B_{\mathrm{St}v}p'$, 
then together with $B_{\mathrm{St}v}p'\prec^s C_{\mathrm{St}w'}q_{w',v}$
it follows that $C_{\mathrm{St}w}q_{w,v}\prec C_{\mathrm{St}w'}q_{w',v}$, 
but this again contradicts with that $\Gamma_2$ is untransvectable. 
If $C_{\mathrm{St}w}q_{w,v}\nprec B_{\mathrm{St}v}p'$, 
then by the first claim of this proof, 
$C_{\mathrm{St}w}q_{w,v}\prec^s B_{\mathrm{St}v}(p_{v,w}-p')$
and therefore $B_{\mathrm{St}v}p'\prec^s B_{\mathrm{St}v}(p_{v,w}-p')$. 
But,
since $\mathcal{Z}(B'_{\mathrm{St}v}\cap pPp)
=\mathcal{Z}(B_{\mathrm{St}v})\overline{\otimes}\mathcal{Z}(LG_0^{\mathrm{St}v})
=\mathcal{Z}(B_{v})$ 
by Lemma 2.14 (5), 
this contradicts with $p', p_{v,w}-p'\in \mathcal{Z}(B'_{\mathrm{St}v}\cap pPp)$
and Lemma 2.14 (4), which states that 
every right $B_{\mathrm{Lk}v}$-finite
$B_{\mathrm{St}v}$-$B_{\mathrm{St}v}$-bimodule in $L^2M$ 
is contained in $L^2(B_{\mathrm{St}v}\rtimes G^{\mathrm{St} v})=L^2(B_{\mathrm{St}v}\rtimes G^{v})$. 
\end{proof}

Now we are ready for the proof of Theorem C. 

\begin{proof}[Proof of Theorem C]
In the setting of Theorem 5.4, assume moreover that $H_1$, $H_2$ are i.c.c.\@ and 
$G_1\action V\Gamma_1$, $G_2\action V\Gamma_2$ are free. 

Since the vertex algebras $B_v$ and $C_w$ are factors,
by Lemma 2.14 (5), it follows that $\mathcal{Z}(B'_{\mathrm{St}v}\cap pPp)=\C p$ and
$\mathcal{Z}(C'_{\mathrm{St}w}\cap qPq)=\C q$. for each $v\in S$ and $w\in T$. 

Take $v\in V\Gamma$ 
Then, by Theorem 5.4, it follows that for each $v\in S$ there is a unique $w\in T$
such that $B_{\mathrm{St}v}\prec^s C_{\mathrm{St}w}$ and vice versa. This gives a 
bijection between $G_1\backslash V\Gamma_1$ and $G_2\backslash V\Gamma_2$ and 
therefore verifies the first half of the theorem.

For the latter part we make use of the amalgamated free product presentation $B_{\mathrm{St}v}=\ast_{v'\in \mathrm{Lk}v, B_v} (B_v\otimes B_{v'})$,
which is due to the assumption $\mathrm{girth}\Gamma_i\geqq 5$. 
Suppose that $v\in S$ and $w\in T$ satisfy
$B_{\mathrm{St}v}\prec^s C_{\mathrm{St}w}$ and 
$C_{\mathrm{St}w}\prec^s B_{\mathrm{St}v}$. 
Then, there are 
a positive integer $n$, a normal $\ast$-homomorphism $\pi : B_{\mathrm{St}v}\to \Mat_n(C_{\mathrm{St}w})$, and a partial isometry $u\in \Mat_{1,n}(P)$ such that 
$u\pi(x)=xu$ for every $x\in B_{\mathrm{St}v}$. 
By shrinking the domain of $u$ if necessary, one may
moreover assume that $f:=\pi(p)=E_{\Mat_n(C_{\mathrm{St}w})}(u^{\ast}u)$. 

Since $\pi(B_v\overline{\otimes} B_{v'})$ has Property (T)
for every $v'\in \mathrm{Lk}v$, 
by Theorem 2.15, there is $w'\in \mathrm{Lk}w$
such that 
$\pi(B_v\overline{\otimes} B_{v'})\prec_{\Mat_n(C_{\mathrm{St}w})} C_w\overline{\otimes} C_{w'}$. 
Then, there are 
a positive integer $m$, a normal $\ast$-homomorphism 
$\varphi : \pi(B_v\overline{\otimes} B_{v'})
\to \Mat_{mn}(C_w\overline{\otimes} C_{w'})$, and a partial isometry 
$u'\in f\Mat_{n,mn}(C_{\mathrm{St}w})$ such that 
$u'\varphi(y)=yu'$ for every $y\in \pi(B_v\overline{\otimes} B_{v'})$. 
Combined together, one can see that $uu'\neq 0$ by the assumption on $f$ and $u$, 
and the $\ast$-homomorphism $\varphi\circ \pi : B_v\overline{\otimes} B_{v'} \to \Mat_{mn}(C_w\overline{\otimes} C_{w'})$ satisfies
$uu'\varphi(\pi(x))= u\pi(x)u'=xuu'$. 

Therefore, for every edge $(v,v')\in E\Gamma_1$, 
there is an edge $(w,w')\in E\Gamma_2$ such that
$B_v\overline{\otimes} B_{v'} \prec^s_P C_w\overline{\otimes} C_{w'}$
(The relation $\prec^s$ follows again because of Lemma 2.14 (5)). 
What is left to be proven is that this correspondence gives an isomorphism between
the quotient graphs. 

Suppose that $v\in S$, $w\in T$, $v'\in \mathrm{Lk}v$, and $w'\neq w''\in \mathrm{Lk}w$
satisfy 
$B_{\mathrm{Lk}v}\prec^s C_{\mathrm{Lk}w}$, 
$B_v\overline{\otimes} B_{v'} \prec^s_P C_w\overline{\otimes} C_{w'}$, and
$C_w\overline{\otimes} C_{w''} \prec^s_P B_v\overline{\otimes} B_{v'}$. 
But this instantly gives 
$C_w\overline{\otimes} C_{w''}\prec^s C_w\overline{\otimes} C_{w'}$, 
and by Lemma 5.2 there is an element $g\in G_2$ such that 
$g\{ w, w' \} = \{w, w''\}$. 
Since $w'\neq w''$ and the action is free, 
it follows that $(gw,gw')=(w'', w)$ and therefore two edges $(w,w'), (w,w'')\in E\Gamma_2$ are conjugate modulo action. This verifies the claim. 
\end{proof}

As a side note, under the assumption of $H_1$, $H_2$ being bi-exact, 
we can claim that one can obtain a correspondence 
between the subalgebras $B_{\mathrm{Lk}v}$ and $C_{\mathrm{Lk}w}$
instead of Theorem 5.4.

\begin{theorem}
Assume that, instead of the assumption in Theorem 5.4, 
\begin{itemize}
\item $H_i$ is nonamenable and bi-exact, 
\item $\Gamma_i$ is a rigid graph i.e. $\mathrm{Lk}_{\Gamma_i}(\mathrm{Lk}_{\Gamma_i}v)=\{ v \}$ for every $v\in V\Gamma_i$, 
\item The action $G_i\action V\Gamma_i$ satisfies the following condition;
for every vertex $v\in V\Gamma_i$ and $g\in G_i$, if $gv'=v'$ holds for any $v'\in \mathrm{St}_{\Gamma_i}v$, then $g=e$, 
\end{itemize}
for $i=1,2$. 
Then, there are projections
$(p_{v,w})_{v\in S, w\in T}$ and $(q_{w,v})_{v\in S, w\in T}$ such that
the following holds for every $v\in S$ and $w\in T$. 
\begin{enumerate}
\item $p_{v,w}\in \mathcal{Z}(B'_{\mathrm{Lk}v}\cap pPp)$, 
$q_{w,v}\in \mathcal{Z}(C'_{\mathrm{Lk}w}\cap qPq)$. \\
\item $\sum_{w'\in T} p_{v,w'}=p$, $\sum_{v'\in S} q_{w,v'}=q$  \\
\item $B_{\mathrm{Lk}v}p_{v,w} \prec^s_P C_{\mathrm{Lk}w}q_{w,v}$, 
$C_{\mathrm{Lk}w}q_{w,v} \prec^s_P B_{\mathrm{Lk}v}p_{v,w}$
\end{enumerate}
\end{theorem}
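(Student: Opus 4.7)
The plan is to imitate the proof of Theorem 5.4, replacing every occurrence of $\mathrm{St}$ by $\mathrm{Lk}$ and invoking two new ingredients available under the stronger hypotheses: (i) the second assertion of Theorem B, which, since $H$ is bi-exact, gives that $M$ and $N$ are bi-exact relative to the finer families $\{H_{1,\mathrm{Lk}v}\}_{v\in V\Gamma_1}$ and $\{H_{2,\mathrm{Lk}w}\}_{w\in V\Gamma_2}$ respectively; and (ii) the rigidity condition $\mathrm{Lk}_{\Gamma_i}(\mathrm{Lk}_{\Gamma_i}v)=\{v\}$, which takes over the role played by untransvectability in Theorem 5.4.

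First I would show that, for every $v\in S$ and every nonzero projection $r\in\mathcal{Z}(B'_{\mathrm{Lk}v}\cap pPp)$, there exists $w\in T$ with $B_{\mathrm{Lk}v}r\prec_P C_{\mathrm{Lk}w}$. By rigidity of $\Gamma_1$, Lemma 2.14(5) gives
\[
B'_{\mathrm{Lk}v}\cap M=\mathcal{Z}(B_{\mathrm{Lk}v})\,\overline{\otimes}\,(B_v\rtimes G_0^{\mathrm{Lk}v}),
\]
so $r$ commutes with $B_v$ and $B_vr$ sits in the commutant of $B_{\mathrm{Lk}v}r$ inside $rPr$; since $H$ is nonamenable, $B_vr$ is nonamenable. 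Viewing $B_{\mathrm{Lk}v}r$ as a subalgebra of a corner of an amplification of $N$ via $N\cong qPq$, if $B_{\mathrm{Lk}v}r$ failed to intertwine into $C_{\mathrm{Lk}w}$ for every $w\in V\Gamma_2$, then Theorem 2.7 applied to $N$ (bi-exact relative to $\{C_{\mathrm{Lk}w}\}_w$) would force its relative commutant in the ambient corner to be amenable, contradicting the presence of $B_vr$. Conjugating by $u_g$ for a suitable $g\in G_2$ lets us take $w\in T$.

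Next, as in Theorem 5.4, I would define $p_{v,w}\in\mathcal{Z}(B'_{\mathrm{Lk}v}\cap pPp)$ and $q_{w,v}\in\mathcal{Z}(C'_{\mathrm{Lk}w}\cap qPq)$ as the maximal projections satisfying $B_{\mathrm{Lk}v}p_{v,w}\prec^s_P C_{\mathrm{Lk}w}$ and $C_{\mathrm{Lk}w}q_{w,v}\prec^s_P B_{\mathrm{Lk}v}$, respectively; the previous step then gives $\bigvee_{w\in T}p_{v,w}=p$ and $\bigvee_{v\in S}q_{w,v}=q$. The remaining assertions---upgrading to $B_{\mathrm{Lk}v}p_{v,w}\prec^s_P C_{\mathrm{Lk}w}q_{w,v}$, its symmetric counterpart, and the orthogonality $p_{v,w}p_{v,w'}=0$ for $w\neq w'$---follow by the same maximality-and-contradiction scheme as in the last two paragraphs of Theorem 5.4, with rigidity now substituting for untransvectability. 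The key combinatorial point is that any failure produces, via Lemmas 2.2 and 5.2, an inclusion $\mathrm{Lk}_{\Gamma_j}(gu)\subseteq\mathrm{Lk}_{\Gamma_j}u'$ for some $gu\neq u'$; applying $\mathrm{Lk}_{\Gamma_j}$ to both sides (which reverses inclusions) together with $\mathrm{Lk}(\mathrm{Lk}(\cdot))=\{\cdot\}$ yields $\{u'\}\subseteq\{gu\}$, forcing $gu=u'$ and contradicting the choice of representatives.

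The main obstacle I anticipate is Step 1: making precise that $B_vr$ is genuinely nonamenable without assuming $H$ is i.c.c.\ (so that $B_v$ need not be a factor), and then invoking Theorem 2.7 cleanly through the amplification isomorphism $N\cong qPq\subseteq p\Mat_n(N)p$. One resolves the first difficulty by noting that $r$ lies in the specific center computed above, so that the restriction of $B_v$ to the range of $r$ inherits nonamenability from $H$. The combinatorial replacement of untransvectability by rigidity in Step 3 is direct once one observes that $\mathrm{Lk}_{\Gamma}$ is order-reversing on subsets of $V\Gamma$.
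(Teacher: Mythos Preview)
Your proposal is correct and follows essentially the same route as the paper's own proof: define $p_{v,w}$ and $q_{w,v}$ as maximal projections, use the $\mathrm{Lk}$-version of Theorem B together with the nonamenability of $B_vr$ (the paper phrases this as ``$B$ has no amenable direct summand since it is a group von Neumann algebra of a nonamenable group'') to get $\bigvee_{w}p_{v,w}=p$, and then refer back to the Theorem 5.4 argument for the remaining claims. Your explicit combinatorial justification that rigidity ($\mathrm{Lk}(\mathrm{Lk}v)=\{v\}$ and order-reversal of $\mathrm{Lk}$) replaces untransvectability is in fact more detailed than what the paper writes, which simply says ``as in the proof of Theorem 5.4, it is easily seen that\ldots''.
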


\begin{proof}
For each pair $(v,w)\in S\times T$, 
let $p_{v,w}\in \mathcal{Z}(B'_{\mathrm{Lk}v}\cap pPp)$ and $q_{w,v}\in \mathcal{Z}(C'_{\mathrm{Lk}w}\cap qPq)$ be the maximal projection 
such that $B_{\mathrm{Lk}v}p_{v,w}\prec^s C_{\mathrm{Lk}w}$ and 
$C_{\mathrm{Lk}w}q_{w,v}\prec^s B_{\mathrm{Lk}v}$. 
(These projections can be zero. )

We claim that $\bigvee_{w\in T} p_{v,w}=p$ and $\bigvee_{v\in S} q_{w,v}=q$
for each $v$ and $w$. 
Note that in this case $B$ has no amenable direct summand 
since it is a group von Neumann algebra
of a nonamenable group.

Take an arbitrary vertex $v\in S$ 
and a projection $r\in \mathcal{Z}(B'_{\mathrm{Lk}v}\cap pPp)$. 
Then, by Lemma 2.14 (5), $r$ commutes with $B_{\mathrm{St}v}$ and the relative commutant of $B_{\mathrm{Lk}v}r$ contains the nonamenable subalgebra $B_vr$. 
By the relative bi-exactness in Theorem B, 
there exists a vertex $w\in V\Gamma_2$ such that $B_{\mathrm{Lk}v}r\prec C_{\mathrm{Lk}w}$. 
By taking a conjugation by the canonical unitaries if necessary, one may take $w\in T$ 
and the claim is proven.  


For the rest part of the proof, 
just note that, as in the proof of Theorem 5.4, 
it is easily seen that 
$B_{\mathrm{Lk}v}p_{v,w}\prec^s C_{\mathrm{Lk}w}q_{w,v}$
and
$p_{v,w}p_{v,w'}= 0$ for any $v\in S$ and $w\neq w'\in T$. 
\end{proof}

We end this paper with a few comments on the rigidity problem for graph products and graph-wreath products. 

\begin{remark}
(1) By using Theorem A instead of Theorem B in the proof of Theorem 5.5 and then 
following the proof of \cite[Theorem 6.18]{bcc}, one can fully recover 
the information of the underlying graph for graph products on rigid graphs 
as in \cite[Theorem 6.18]{bcc} 
when the vertex groups are bi-exact and i.c.c.. 

(2) As mentioned at the end of Section 1, the author does not know if one can recover 
the underlying graph $\Gamma$ or the group $G$ themselves from the resulting von Neumann algebra. One obstruction to these global rigidity results is the difficulty in establishing the relation of the form $B_E \prec C_F$ where $E\nsubseteq \mathrm{St}v$ for any $v\in V\Gamma_1$. For example, if one gets $B_{\Gamma_1}\prec C_{\Gamma_2}$, then 
one easily obtain the isomorphism between $G_1$ and $G_2$ up to finite normal subgroups 
by \cite[Lemma 7.6]{im}. 

\end{remark}

\bibliographystyle{amsalpha}
\bibliography{MThesisRef}


\end{document}